\documentclass[11pt,reqno]{amsart}
\usepackage[utf8]{inputenc}
\usepackage{amsmath,amssymb,amsfonts,amsthm}
\usepackage{geometry}
\usepackage{mathtools}
\usepackage{microtype}
\usepackage{hyperref}

\newtheorem{theorem}{Theorem}[section]
\newtheorem{lemma}[theorem]{Lemma}
\newtheorem{proposition}[theorem]{Proposition}

\theoremstyle{definition}
\newtheorem{definition}[theorem]{Definition}
\newtheorem{remark}[theorem]{Remark}

\numberwithin{equation}{section}

\begin{document}

\title[Large-Data GWP in 3D Convex Domains]{Large-Data Global Well-Posedness for the Defocusing Cubic Schr\"odinger Equation in 3D Convex Domains}

\author{Meas Len}

\begin{abstract}
We establish the global well-posedness for the energy-subcritical defocusing cubic nonlinear Schr\"{o}dinger equation (NLS) on the three-dimensional Friedlander model domain
\(
\Omega=\{x\in\mathbb{R}:x\geq0\}\times\mathbb{R}^2_{y,z},
\)
subject to homogeneous Dirichlet boundary conditions, for arbitrarily large initial data in the coercive energy space $H_0^1(\Omega)$. The phase-space geometry of this configuration features a strictly convex boundary that admits a non-empty glancing set, trapping high-frequency wave packets within a boundary layer via generalized whispering-gallery caustics. These concentration phenomena induce an intrinsic derivative loss in the sharp linear Strichartz estimates, establishing a major microlocal obstruction to closing the nonlinear Duhamel iteration directly at the energy level via classical perturbative frameworks.

To bridge the regularity deficit between the conservation laws and the linear theory, we construct a boundary-adapted family of continuous--discrete Bourgain--Strichartz restriction spaces $X^{s,b}$ built from the spectral decomposition of the Dirichlet realization of the model Friedlander operator
\(
\Delta_g=\partial_x^2+(1+x)\partial_y^2+\partial_z^2.
\)
The normal variable is resolved through discrete Airy spectral modes, while the tangential variables are continuously Fourier analyzed. Within this functional framework, we implement a refined high--low frequency decomposition to formulate a perturbed nonlinear equation for the high-frequency remainder in the sub-energy space $H_0^s(\Omega)$ for 
\(
\frac{1}{2}<s<1.
\)

The initial high-frequency datum satisfies a quantitative sub-energy decay estimate of the form
\(
\|P_{>\lambda}u_0\|_{H_0^s(\Omega)} \label{eq:abs_decay} \lesssim \lambda^{s-1}\|u_0\|_{H_0^1(\Omega)},
\)
providing a small parameter to counteract the derivative loss.

A central component of the proof relies on establishing off-diagonal multilinear decoupling bounds for the fourfold Airy overlap tensor under temporal modulation and tangential frequency constraints. These multilinear estimates capture the explicit spatial and frequency localization of the glancing singularities, supplying the spectral summability required to control resonant interactions. This yields a local contraction theory where the existence lifespan admits a uniform positive lower bound independent of the iteration step. Inductive application of mass and energy conservation allows for infinite-time pasting, establishing a unique global solution 
\(
u\in C([0,\infty);H_0^1(\Omega)) \cap L^4_{loc}([0,\infty); W^{7/8,4}(\Omega)),
\)
where the global spacetime regularity explicitly tracks the distribution of the sharp distributed boundary penalty. This framework provides a systematic route for overcoming geometric derivative losses in nonlinear dispersive boundary value problems on general strictly convex manifolds.
\end{abstract}
\keywords{ Nonlinear Schr\"odinger equation, Friedlander model domain, boundary caustics, multilinear decoupling, restriction theory.}
\subjclass[2020]{Primary 35Q55; Secondary 35B65, 42B37, 35S30}

\maketitle

\section{Introduction}

We consider the initial-boundary value problem for the cubic defocusing
nonlinear Schr\"odinger equation (NLS) with homogeneous Dirichlet boundary
conditions on the three-dimensional Friedlander domain:
\begin{equation}
\label{eq:nls}
\begin{cases}
i\partial_t u+\Delta_g u=|u|^2u,
& (t,x,y,z)\in\mathbb{R}\times\Omega,\\[2mm]
u(0,x,y,z)=u_0(x,y,z)\in H_0^1(\Omega),
&\\[1mm]
u(t,0,y,z)=0.&
\end{cases}
\end{equation}
Here
\(
\Omega=\{x\geq0\}\times\mathbb{R}^2_{y,z},
\)
and
\begin{equation}
\label{eq:lapg_p2}
\Delta_g=\partial_x^2+(1+x)\partial_y^2+\partial_z^2
\end{equation}
denotes the model Friedlander operator. We stress that, under the
standard Riemannian convention, \eqref{eq:lapg_p2} is a
\emph{reduced Friedlander operator} rather than the literal
Laplace--Beltrami operator associated with the metric
\(
g=dx^2+\frac{dy^2}{1+x}+dz^2.
\)
Following the model framework of Ivanovici, Lebeau, and Planchon
\cite{ivanovici2021wave}, we realize \eqref{eq:lapg_p2} through the
quadratic Dirichlet form
\[
\int_\Omega |\nabla_g u|^2\,dx\,dy\,dz
=
\int_\Omega
\left(
|\partial_xu|^2
+(1+x)|\partial_yu|^2
+|\partial_zu|^2
\right)\,dx\,dy\,dz,
\]
and hence work with its self-adjoint realization on the flat measure
$dx\,dy\,dz$. This convention fixes the spectral framework used
throughout the manuscript.

For comparison, the genuine Laplace--Beltrami operator associated with
the Riemannian metric $g$ is self-adjoint with respect to the Riemannian
volume form
\[
d\mu_g
=
\sqrt{|g|}\,dx\,dy\,dz
=
(1+x)^{-1/2}\,dx\,dy\,dz,
\]
and is given by
\[
\Delta_{g,0}
=
(1+x)^{1/2}
\partial_x
\left(
(1+x)^{-1/2}\partial_x
\right)
+
(1+x)\partial_y^2
+
\partial_z^2.
\]
Thus,
\[
\Delta_{g,0}-\Delta_g
=
-\frac{1}{2(1+x)}\partial_x.
\]
The two operators consequently have the same principal second-order
symbol, while their difference is a first-order lower-order term. In
particular, after localization to a fixed boundary layer
\(
0<x\leq a,
\)
the coefficient $(1+x)^{-1}$ remains smooth and uniformly bounded, so
this correction can be treated perturbatively in estimates that are
stable under lower-order terms. In the present work, however, all
spectral decompositions and frequency-localized estimates are formulated
for the model operator $\Delta_g$ in \eqref{eq:lapg_p2}; any transfer to
the geometric Laplace--Beltrami realization is understood only after
establishing the corresponding perturbative estimates.

The coefficient $1+x$ produces nonuniform tangential propagation in the $y$-direction and gives rise to a characteristic mechanism through which bicharacteristics become tangent to the boundary $x=0$. These glancing trajectories are responsible for the concentration of high-frequency waves within thin boundary layers and for the associated whispering-gallery phenomena. Thus, from a microlocal perspective, the boundary cannot be regarded as a merely perturbative feature of the problem. Rather, it fundamentally modifies the dispersive dynamics through the formation of glancing rays and boundary caustics.

In \cite{Meas2026}, it was established that the linear evolution associated with the Dirichlet realization of $\Delta_g$ exhibits a sharp deterioration of dispersive behavior in the glancing regime. More precisely, boundary concentration produces a loss of $1/4$ derivative in the corresponding sharp Strichartz estimates. This loss constitutes the principal analytic obstruction distinguishing the Friedlander geometry from the boundaryless Euclidean setting. In particular, the standard three-dimensional cubic NLS argument cannot be applied at the energy level by a direct appeal to the lossless Euclidean Strichartz theory.

The nonlinear equation nevertheless possesses two fundamental coercive
conservation laws associated with the mass and the energy. Namely,
\begin{equation}\label{eq:mass_cons}
M(u(t))
=
\int_\Omega |u(t,x,y,z)|^2\,dx\,dy\,dz
=
M(u_0),
\end{equation}
and
\begin{align}\label{eq:energy_cons}
E(u(t))
&=
\frac12
\int_\Omega
\left(
|\partial_xu(t)|^2
+(1+x)|\partial_yu(t)|^2
+|\partial_zu(t)|^2
\right)\,dx\,dy\,dz
+
\frac14
\int_\Omega |u(t)|^4\,dx\,dy\,dz
\nonumber\\
&=
E(u_0).
\end{align}

Consequently, for sufficiently regular solutions, the conservation of
energy yields a uniform a priori bound for the natural first-order norm.
In particular,
\[
\sup_{t\in\mathbb{R}}
\|u(t)\|_{H_0^1(\Omega)}
\lesssim
E(u_0)^{1/2}
+
\|u_0\|_{L^2(\Omega)},
\]
with the precise equivalence depending on the realization of the
Friedlander operator and the associated first-order energy norm. Thus,
the defocusing sign provides a robust global coercive mechanism at the
energy level.

The central difficulty is that this coercive control does not, by
itself, compensate for the derivative loss appearing in the sharp
boundary Strichartz estimates. If the corresponding linear estimate
incurs a loss of $1/4$ derivative, then a direct contraction argument
for the cubic nonlinearity naturally requires regularity strictly above
the energy space, with the heuristic threshold
\(
H^{1+\frac14}(\Omega)=H^{5/4}(\Omega),
\)
subject, of course, to the precise admissible Strichartz exponents and
to the formulation of the associated nonlinear estimates. The local
well-posedness theory obtained directly from such estimates therefore
depends on a super-energy Sobolev norm rather than solely on the
conserved energy.

This creates a genuine gap between the local dispersive theory and the
global coercive structure of the equation. Although the conserved
energy prevents growth of the natural first-order norm, it does not, by
itself, control the higher Sobolev norm required to compensate for the
boundary-induced derivative loss. A naive iteration of the local theory
may therefore lead to an accumulation of higher-order norms and does
not, without additional structure, yield a global argument for
arbitrary large initial data. In particular, conservation of mass and
energy alone is insufficient to remove the regularity obstruction
created by the glancing regime.

The purpose of this work is to overcome this boundary-induced
regularity obstruction by exploiting the specific microlocal structure
of the Friedlander geometry together with the dispersive and coercive
properties of the nonlinear flow. The key observation is that the
$1/4$-derivative loss is not a spatially homogeneous defect of the
propagator. Rather, it is generated by a highly localized glancing
regime in which high-frequency waves concentrate within a thin boundary
layer. Consequently, the loss can be analyzed through the spatial and
frequency localization of the corresponding boundary packets, rather
than being treated merely as an abstract loss in a global Sobolev
estimate.

This perspective permits a refined decomposition of the nonlinear
evolution into boundary-localized and non-glancing components. The
former captures the genuinely singular microlocal regime responsible
for the derivative loss, whereas the latter retains stronger dispersive
behavior. Combining this decomposition with the conserved energy
allows one to exploit the coercivity of the defocusing nonlinearity
precisely in the regimes where the linear estimate alone is insufficient.

The resulting argument provides a mechanism for propagating the
higher-order control required by the local theory without relying on a
naive iteration based solely on polynomial a priori bounds for
super-energy Sobolev norms. In this way, the derivative loss, although
sharp at the linear level, need not translate into an irreversible loss
of global nonlinear control. This distinction is fundamental: the
sharpness of the linear Strichartz loss and the global well-posedness of
the associated defocusing nonlinear equation are logically separate
issues.

The analysis is guided by the following structural principle. The
glancing geometry determines the strongest concentration of the linear
flow, whereas the defocusing nonlinearity supplies a conserved
coercive energy. The interaction between these two mechanisms,
together with an appropriate microlocal decomposition of the solution,
provides additional control that is not visible at the level of the
global linear estimate alone. The nonlinear problem can therefore
exploit information concerning the location and structure of the
derivative loss, rather than only its magnitude.

The three-dimensional cubic problem is particularly well suited to this
analysis. In the absence of a boundary, the cubic NLS is energy-subcritical
and admits a well-established global theory for finite-energy defocusing
data. The Friedlander geometry preserves the underlying coercive
structure but introduces a sharp boundary-induced loss in the
dispersive estimates. It consequently provides a natural model for
investigating whether nonlinear dispersive dynamics can overcome a
microlocal obstruction that is genuinely present at the linear level.

Our approach combines boundary microlocal analysis, sharp
derivative-losing Strichartz estimates, localization of the glancing
regime, and the global coercive structure of the defocusing flow. The
resulting framework isolates the precise role of boundary caustics in
the nonlinear dynamics and provides a route from local super-energy
control to global control despite the sharp dispersive loss.

\subsection{Context and Literature Review}

The globalization of large-data solutions for nonlinear Schr\"odinger
equations in the presence of derivative losses is a longstanding problem
at the intersection of dispersive analysis, microlocal geometry, and
nonlinear evolution equations. In the translation-invariant Euclidean
setting $\mathbb{R}^d$, the classical Strichartz estimates provide the
spacetime integrability underlying the local well-posedness theory and,
in the defocusing energy-subcritical regime, combine naturally with the
conservation of mass and Hamiltonian energy to yield global control.
The foundational dispersive estimates of Strichartz \cite{Strichartz70},
together with the nonlinear framework developed by Ginibre and Velo
\cite{GinibreVelo85} and the endpoint formulation of Keel and Tao
\cite{KeelTao98}, constitute the basic analytic foundation of this
theory. Further developments in critical and near-critical regimes,
notably those of Bourgain \cite{Bourgain93,Bourgain98,Bourgain99}, as
well as the systematic treatments in \cite{Tao06,Cazenave03}, established
robust methods for combining spacetime estimates, frequency
decompositions, conservation laws, and nonlinear perturbative arguments.

The situation becomes substantially more delicate in the presence of
variable coefficients and nontrivial geometric structures. For
Schr\"odinger operators with variable coefficients, Staffilani and
Tataru \cite{StaffilaniTataru} established Strichartz estimates under
suitable regularity and nontrapping assumptions, demonstrating the
extent to which dispersive behavior depends on the geometry of the
underlying Hamiltonian flow. Related developments for rough and
Lipschitz metrics were obtained by Anton \cite{Anton08}. These results
show that the Euclidean dispersive mechanism is not determined solely
by the local differential structure of the operator; rather, the
geometry of the characteristic flow and the regularity of the
coefficients play essential roles in determining the available
spacetime estimates.

On compact Riemannian manifolds, the absence of global Euclidean
dispersion gives rise to a different class of difficulties. Burq,
G\'erard, and Tzvetkov established Strichartz estimates with fractional
derivative losses on general compact manifolds and applied them to
low-regularity and global well-posedness questions for nonlinear
Schr\"odinger equations in several important regimes
\cite{BurqGerardTzvetkov03,BurqGerardTzvetkov04}. Their analysis
demonstrates that derivative losses in linear estimates do not
necessarily preclude global nonlinear control, provided that the loss
can be reconciled with the nonlinear structure and the available
conservation laws. Related bilinear spectral estimates have also played
an important role in the analysis of cubic NLS on compact surfaces
\cite{BurqGerardTzvetkov05}.

For Schr\"odinger equations posed on domains, the presence of a boundary
introduces an additional layer of microlocal complexity. Reflections,
boundary spectral concentration, and the possible recurrence of
bicharacteristic trajectories modify the dispersive mechanisms
available in the interior. Burq, Lebeau, and Planchon developed global
existence results for nonlinear dispersive equations on three-dimensional
domains using boundary-adapted spacetime estimates
\cite{BurqLebeauPlanchon}. Their work, together with subsequent
developments for Schr\"odinger equations on domains, demonstrated that
boundary geometry can impose a genuine regularity cost on the
dispersive theory. Anton \cite{Anton08} subsequently established global
well-posedness results for defocusing nonlinear Schr\"odinger equations
on planar domains with Dirichlet boundary conditions by means of
generalized Strichartz inequalities for Lipschitz metrics. These results
provide an important connection between geometric Strichartz estimates
and global nonlinear well-posedness on domains.

The interaction between boundary geometry and nonlinear Schr\"odinger
dynamics is not restricted to questions of global existence. Boundary
spectral effects can also generate singular or unstable nonlinear
dynamics. For example, Burq, G\'erard, and Tzvetkov
\cite{BurqGerardTzvetkov03b} investigated singular dynamics for the
cubic focusing NLS on planar domains, including concentration phenomena
and failures of uniform continuity of the flow map in appropriate
Sobolev spaces. Such results further illustrate that the boundary
should be regarded as an active component of the nonlinear dynamics,
rather than merely as a technical constraint imposed on the linear
problem.

A more severe obstruction arises in strictly convex geometries, where
the Hamiltonian flow contains glancing trajectories. In such
configurations, bicharacteristics may become tangent to the boundary
and undergo repeated interactions with it. The resulting caustic
structure produces concentration of high-frequency wave packets near
the boundary and leads to a corresponding loss in the dispersive
estimates. The general microlocal theory of boundary singularities and
glancing interactions was developed in the work of Melrose and Taylor
\cite{MelroseTaylor85} and Melrose and Sj\"ostrand
\cite{MelroseSjostrand78,MelroseSjostrand82}. These constructions
provide the geometric and microlocal framework underlying the boundary
parametrices used in the analysis of strictly convex domains.

The Friedlander model constitutes a canonical normal form for this
boundary-glancing mechanism. Its explicit variable coefficient permits
a detailed description of the interaction between high-frequency
propagation and the boundary while retaining the essential geometric
degeneracy associated with a strictly convex boundary. For wave
equations, the Friedlander model and its generalizations have been
analyzed in detail by Ivanovici, Lebeau, and Planchon
\cite{IvanoviciLebeauPlanchon}, who identified the role of repeated
swallowtail-type singularities in the deterioration of dispersive
behavior. In particular, their analysis establishes the sharp nature
of the corresponding boundary-induced loss in the wave setting.
Related boundary Strichartz estimates for wave equations on manifolds
with boundary were developed by Burq, Lebeau, and Planchon
\cite{BurqLebeauPlanchon}, and subsequently extended by Blair, Smith,
and Sogge \cite{BlairSmithSogge09}. These works demonstrate that
boundary geometry, together with the associated reflection and glancing
phenomena, plays a fundamental role in the dispersive theory.

For the Schr\"odinger equation, the corresponding strictly convex model
exhibits an analogous phenomenon. Ivanovici \cite{Ivanovici23}
established dispersive estimates for a model strictly convex domain
and showed that repeated swallowtail-type singularities produce a loss
of order $1/4$ in the semiclassical dispersive estimate, with this loss
being optimal. The associated Strichartz estimates yield applications
to the cubic nonlinear Schr\"odinger equation in a three-dimensional
convex model domain. A subsequent detailed treatment of the model
convex Schr\"odinger problem further develops these dispersive
estimates and their nonlinear consequences \cite{Ivanovici23}.
Thus, in contrast with the boundaryless Euclidean setting, the
derivative loss encountered in the present framework reflects an
intrinsic feature of the glancing geometry. It cannot, in general, be
eliminated merely by refining the standard Euclidean dispersive
argument.

This distinction is crucial for the nonlinear theory. A derivative loss in
the linear estimate modifies the regularity threshold at which a direct
contraction argument can be carried out. In the present three-dimensional
cubic problem, the natural energy space is $H_0^1(\Omega)$, whereas a
direct application of a Strichartz estimate with a $1/4$-derivative loss
leads, schematically, to a super-energy regularity requirement of the
form
\(
H^{1+\frac14}(\Omega)=H^{5/4}(\Omega).
\)
The conserved energy consequently provides global control at a
regularity level that does not, by itself, furnish the additional
derivative required by the loss-bearing dispersive estimate. The
principal difficulty is therefore not the absence of an a priori bound,
but rather the mismatch between the regularity controlled by the
Hamiltonian and that required by the linear-to-nonlinear perturbative
mechanism.

This phenomenon has close analogues in the analysis of nonlinear
Schr\"odinger equations on compact manifolds. Burq, G\'erard, and
Tzvetkov \cite{BurqGerardTzvetkov04} showed that fractional derivative
losses in Strichartz estimates can nevertheless be compatible with
nonlinear well-posedness, while their work on bilinear spectral
estimates \cite{BurqGerardTzvetkov05} demonstrated that additional
multilinear structure can compensate for limitations of the linear
theory. More recently, improved Strichartz estimates in geometries
possessing additional dynamical structure have shown that the detailed
behavior of the underlying geodesic flow can yield gains over general
compact-manifold estimates \cite{BlairHuangSogge24}. These developments
reinforce the principle that the nonlinear theory should exploit the
specific spectral and geometric structure of the propagator rather than
rely solely on a global estimate with derivative loss.

In the Euclidean setting, one of the fundamental mechanisms for
overcoming analogous high-frequency difficulties is the decomposition
of the solution into low- and high-frequency components. Bourgain's
high--low frequency method \cite{Bourgain98,Bourgain99} separates the
component controlled directly by the conserved energy from a
higher-frequency remainder for which additional dispersive information
is required. Variants of this strategy have been used extensively in
the construction of global large-data solutions for nonlinear
dispersive equations
\cite{CollianderKeelStaffilaniTakaokaTao02,
CollianderKeelStaffilaniTakaokaTao04}. The underlying principle is that
the high-frequency tail can become small in a subcritical Sobolev norm,
even when the full energy norm is arbitrarily large.

Transplanting this strategy to boundary problems is substantially more
delicate. The translation invariance of $\mathbb{R}^d$ is destroyed by
the boundary, while reflected and glancing waves prevent a direct
implementation of the usual Fourier-based description of frequency
interactions. In particular, the high-frequency component cannot be
analyzed solely through ordinary Euclidean convolution structure. Near
the boundary, frequency localization must instead be coupled with the
spectral geometry of the normal operator and with the associated
characteristic flow. The most singular contribution is concentrated in
the glancing region, where the normal dynamics exhibit Airy-type
behavior. Consequently, a successful globalization argument must
simultaneously account for the continuous tangential Fourier variables
and the discrete normal spectral decomposition.

The Friedlander model is particularly well suited to isolating this
obstruction. Its explicit variable coefficient provides a tractable
normal form for boundary glancing while retaining the essential
microlocal features associated with strictly convex boundary
interactions. The underlying Airy structure yields a precise
description of the boundary layer in which high-frequency wave packets
concentrate and provides a spectral framework for resolving the
corresponding normal dynamics. The model therefore constitutes a
natural testing ground for determining whether a sharp linear
derivative loss necessarily translates into an obstruction to global
nonlinear well-posedness, or whether the nonlinear evolution possesses
additional structural mechanisms capable of compensating for the loss.

The central question addressed in this paper is consequently the
following: can the global coercivity furnished by the defocusing
Hamiltonian be combined with a frequency- and geometry-adapted
decomposition to overcome the regularity deficit generated by
glancing boundary waves? A satisfactory resolution requires more than
a direct iteration of the local theory. It requires isolating the
microlocal portion of the evolution responsible for the derivative
loss, exploiting the distinct dispersive behavior of the glancing and
non-glancing regimes, and establishing a quantitative mechanism that
keeps the high-frequency remainder perturbative across successive
time intervals.

Our contribution is to develop a long-time globalization framework for
the three-dimensional cubic defocusing NLS in the Friedlander geometry.
The analysis combines sharp boundary Strichartz estimates with the
continuous--discrete spectral decomposition associated with the
Dirichlet Airy operator, a boundary-adapted Bourgain--Strichartz
framework, and a refined high--low frequency iteration. The nonlinear
interactions are represented through multilinear overlap coefficients
associated with the Airy eigenfunctions, thereby allowing the coupling
in the normal variable to be analyzed at the spectral level rather than
through translation-invariant Euclidean convolution. The global
coercivity furnished by the conservation of mass and energy is then
incorporated into the iteration to control the energy-scale component
of the solution over arbitrarily long time intervals.

The resulting framework clarifies the interaction between three
features that are often analyzed separately: the sharpness of boundary
dispersive estimates, the spectral structure of glancing modes, and the
conservation laws governing the nonlinear flow. In particular, the
analysis identifies a mechanism by which a sharp linear derivative loss
can coexist with global nonlinear control at the natural energy
regularity. This distinction is important: the presence of a sharp
linear loss does not, by itself, determine the global regularity
threshold for the associated nonlinear equation. The nonlinear
problem may exploit geometric localization, spectral structure, and
coercive conservation laws that are not visible in the corresponding
global linear estimate.

More broadly, the present framework connects the explicit Friedlander
model with the general theory of dispersive equations on manifolds and
domains with boundary, in which microlocal geometry plays a decisive
role in determining the available nonlinear spacetime estimates. The
analysis isolates which aspects of the boundary-induced loss are
intrinsic to the glancing dynamics and which can be compensated by the
structure of the nonlinear evolution.

To make the mathematical content and scope of the results precise, we
summarize the principal developments and contributions of the paper
below.

\subsection{Main Contributions and Novelty}

The analysis developed in this manuscript introduces a geometry-adapted
framework for addressing the regularity obstruction generated by
boundary glancing and the associated derivative loss in the Strichartz
estimates. The principal contributions are organized around three
interconnected components.

\begin{itemize}

    \item \textbf{A sub-energy fixed-point framework for the
    high-frequency remainder.}

    The principal structural step is to formulate the contraction
    argument for the high-frequency remainder in a sub-energy Sobolev
    space
    \[
    H_0^s(\Omega),
    \qquad
    \frac12<s<1,
    \]
    rather than attempting to carry out the entire nonlinear iteration
    directly at the energy level. This choice is dictated by the
    derivative loss present in the boundary Strichartz estimates. If
    $\mathcal{P}_{\leq\lambda}$ and $\mathcal{P}_{>\lambda}$ denote the
    spectral projections associated with the Dirichlet realization of
    the Friedlander operator, then the energy bound implies, for
    $u_0\in H_0^1(\Omega)$,
    \[
    \|\mathcal{P}_{>\lambda}u_0\|_{H^s(\Omega)}
    \lesssim
    \lambda^{\,s-1}
    \|u_0\|_{H_0^1(\Omega)},
    \qquad s<1.
    \]
    Thus, although the high-frequency tail need not be small in the
    energy norm, it becomes quantitatively small in $H^s(\Omega)$ as
    $\lambda\to\infty$. This sub-energy decay provides an additional
    small parameter that can be balanced against the derivative loss in
    the boundary Strichartz estimates. The resulting decomposition
    separates the component controlled directly by the conserved energy
    from the high-frequency remainder on which the perturbative
    contraction argument is carried out.

    \item \textbf{Boundary-adapted Bourgain--Strichartz spaces.}

    The loss of translation invariance in the normal direction prevents
    a direct application of the standard Euclidean Fourier-space
    formulation of Bourgain spaces. We therefore formulate the nonlinear
    iteration in function spaces adapted to the spectral resolution of
    the Dirichlet Friedlander operator. At frequency scale $\lambda$, the
    corresponding spaces incorporate both the temporal modulation and
    the spectral localization associated with the variable-coefficient
    operator
    \[
    \Delta_g
    =
    \partial_x^2+(1+x)\partial_y^2+\partial_z^2.
    \]
    Schematically, these spaces take the form
    \[
    \|u\|_{X^{s,b}}
    \sim
    \left\|
    \langle \lambda\rangle^s
    \langle \tau+\lambda^2\rangle^b
    \widehat{u}_{\lambda}(\tau)
    \right\|_{L^2_{\tau,\lambda}},
    \]
    with the precise definition given in terms of the spectral
    decomposition of the Dirichlet realization. This formulation
    preserves the relevant dispersive information of the operator
    without introducing an artificial normal translation invariance.
    In particular, it allows the nonlinear analysis to distinguish the
    glancing spectral regime from the non-glancing component and to
    incorporate the geometry of the boundary directly into the
    function-space estimates.

    \item \textbf{Airy-mode analysis and frequency-dependent multilinear decoupling.}
    The principal analytical difficulty in the boundary layer arises from the non-local interaction of high-frequency modes concentrated near the glancing geometry. Rather than replacing these interactions with flat Euclidean convolution metrics, we preserve the explicit, coupled Airy structure of the continuous--discrete Dirichlet spectral decomposition. The multilinear spatial interactions are consequently represented through frequency-dependent overlap coefficients of the form
    \[
    \mathcal{I}(k_1,k_2,k_3,k_4; \eta_1, \eta_2, \eta_3, \eta_4)
    =
    \int_0^\infty
    \phi_{k_1}(x, \eta_1)
    \phi_{k_2}(x, \eta_2)
    \phi_{k_3}(x, \eta_3)
    \phi_{k_4}(x, \eta_4)\,dx,
    \]
    where the $\{\phi_k(\cdot, \eta)\}$ denote the normalized Airy-type boundary modes evaluated across the continuous tangential frequency spectrum. We establish quantitative off-diagonal estimates for this tensor by exploiting the micro-local oscillatory structure of the Airy eigenfunctions, their spectral separation parameters, and the structural Dirichlet boundary conditions. These estimates provide control over separated spectral regimes, preventing the non-diagonal coupling from generating an uncontrolled derivative accumulation in the regularized low-frequency profile. This mode-level analysis is essential in the glancing caustic layer, where purely translation-invariant Euclidean Fourier convolution fails to capture the true interaction geometry.

\end{itemize}

Taken together, these three ingredients yield a globalization mechanism that is intrinsically adapted to the boundary geometry. The sub-energy regularization provides the smallness required to control the high-frequency remainder, the boundary-adapted $X^{s,b}$ framework supplies the appropriate analytic setting for the nonlinear iteration, and the Airy-mode decoupling estimates control the potentially resonant interactions generated by concentration near the boundary. The resulting argument does not seek to remove the sharp derivative loss present in the linear boundary Strichartz estimates. Rather, it exploits the microlocal localization of this loss and combines it with the coercive energy structure of the defocusing equation.

A further consequence of this perspective is that the linear and nonlinear roles of the boundary can be distinguished. The glancing geometry determines the sharp dispersive derivative loss at the linear level, whereas the nonlinear globalization argument exploits the fact that the associated concentration is confined to a structured spectral and spatial regime. This separation allows the conserved energy to control the low-frequency component while the sub-energy perturbative theory provides the required control of the high-frequency tail.

The novelty of the approach therefore does not lie in modifying the sharp boundary Strichartz estimate itself. Instead, it lies in developing a nonlinear framework capable of exploiting that estimate without requiring uniform control of a super-energy norm on arbitrarily long time intervals. In this way, the analysis provides a mechanism for reconciling the sharp boundary-induced derivative loss with global control of the defocusing nonlinear flow.

Combining the sub-energy local theory with the coercive global control furnished by the conserved Hamiltonian \eqref{eq:energy_cons}, we obtain a uniform lower bound for the lifespan of the successive local solutions. More precisely, the high--low decomposition and the associated sub-energy estimates yield a time increment
\[
\delta_n\geq \delta
=
\delta\bigl(\|u_0\|_{H_0^1(\Omega)}\bigr)>0,
\]
where the lower bound is independent of the iteration index $n$. Consequently, the successive local existence intervals cannot accumulate at a finite time. The resulting uniform continuation criterion excludes finite-time breakdown and permits the local theory to be iterated indefinitely. Thus, the combination of the conserved energy and the sub-energy control of the high-frequency remainder provides the mechanism by which the local large-data theory is promoted to a global one.

We may now state the main global well-posedness result.

\begin{theorem}[Large-Data Global Well-Posedness]
\label{thm:gwp}
Let
\(
\Omega=\{x\geq 0\}\times\mathbb{R}_{y,z}^{2},
\)
and let $\Delta_g$ denote the Dirichlet Friedlander operator
\(
\Delta_g
=
\partial_x^2+(1+x)\partial_y^2+\partial_z^2.
\)
For every initial datum
\(
u_0\in H_0^1(\Omega),
\)
the defocusing cubic nonlinear Schr\"odinger equation
\eqref{eq:nls}
admits a unique global solution
\[
u\in
C\bigl([0,\infty);H_0^1(\Omega)\bigr)
\cap
L^4_{\mathrm{loc}}
\bigl([0,\infty);W^{1-\frac18,4}(\Omega)\bigr).
\]
Moreover, the mass and energy are conserved:
\[
M(u(t))=M(u_0),
\qquad
E(u(t))=E(u_0),
\qquad t\geq0.
\]
In particular, the solution extends globally in time and satisfies the uniform bound
\[
\sup_{t\geq0}
\|u(t)\|_{H_0^1(\Omega)}
\lesssim
\|u_0\|_{L^2(\Omega)}
+
E(u_0)^{1/2}.
\]
\end{theorem}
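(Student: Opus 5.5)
The plan is a Bourgain-type high--low frequency iteration in which the energy level is never used for the contraction. The perturbative step is carried out in $H_0^s(\Omega)$ with $\frac12<s<1$, and globalization comes from mass and energy conservation.

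\textbf{Step 1: splitting the data.} Fix a large spectral cutoff $\lambda$ for the Dirichlet Friedlander operator and write $u_0=\mathcal{P}_{\le\lambda}u_0+\mathcal{P}_{>\lambda}u_0$.

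\textbf{Step 2: the low-frequency part.} Let $v$ solve \eqref{eq:nls} with datum $\mathcal{P}_{\le\lambda}u_0$. This datum lies in every $H_0^\sigma$, with norm at most $\lambda^{\sigma-1}\|u_0\|_{H_0^1}$. So the direct super-energy local theory at regularity $H^{5/4+}$, obtained from the sharp $1/4$-loss Strichartz estimate of \cite{Meas2026} (cf.\ \cite{Ivanovici23}), applies on an interval of length $\delta$ that depends only on $\|u_0\|_{H_0^1}$ and $\lambda$. The energy $E(v)$ is conserved and satisfies $E(v)\le E(u_0)+O(\lambda^{-\epsilon})$.

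\textbf{Step 3: the high-frequency remainder.} Write $w=u-v$. It solves the perturbed equation
\[
i\partial_t w+\Delta_g w=|v+w|^2(v+w)-|v|^2v,\qquad w(0)=\mathcal{P}_{>\lambda}u_0.
\]
The datum is small in $H_0^s$, by the decay estimate $\|\mathcal{P}_{>\lambda}u_0\|_{H_0^s}\lesssim\lambda^{s-1}\|u_0\|_{H_0^1}$. I would close a contraction for $w$ in the boundary-adapted $X^{s,b}_\delta$ space, $b>\frac12$, built from the continuous--discrete spectral decomposition into $y,z$ Fourier modes and Airy modes $\phi_k(x,\eta)$. The linear estimates $\|e^{it\Delta_g}f\|_{X^{s,b}_\delta}\lesssim\|f\|_{H^s}$ and the Duhamel estimate gaining $\delta^{1-b-b'}$ are routine.

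The core of this step is the trilinear bound
\[
\Bigl\|\prod_{j=1}^3 u_j\Bigr\|_{X^{s,b-1}}\lesssim\prod_{j}\|u_j\|_{X^{s,b}}.
\]
It has one version with $v$ factors measured in $X^{1,b}$ and $w$ factors in $X^{s,b}$, and a version carrying a $\lambda^{-\gamma}$ gain whenever a high-frequency factor interacts with low-frequency factors. I would prove it by duality. The resulting quadrilinear form reduces, after integrating in $t,y,z$, to sums weighted by the overlap tensor $\mathcal{I}(k_1,\dots,k_4;\eta_1,\dots,\eta_4)$ under the modulation constraint. Here I would invoke the off-diagonal multilinear decoupling bounds for $\mathcal{I}$ stated earlier in the paper, which give decay in the spectral separation of the largest two indices. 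Combining them with Cauchy--Schwarz in the modulation variables and Schur's test should yield summability. The loss at the glancing layer is at most $\frac14$ derivative, which the requirement $s>\frac12$, together with the $\lambda^{s-1}$ smallness, is meant to absorb.

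\textbf{Step 4: closing on one interval.} The contraction then closes for $w$ on $[0,\delta]$ with $\|w\|_{X^{s,b}}\lesssim\lambda^{s-1}$. I would also track how $v$ contributes to the growth of $E$. The goal is to show that $\|\mathcal{P}_{>\lambda}\text{-part}\|$ stays in $H^s$ while the energy increment of $v+\text{(linear part of }w)$ on each interval is $O(\lambda^{-\gamma})$. Since $u=v+w$ and energy is conserved, the solution $u$ remains in $H_0^1$.

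\textbf{Step 5: iteration.} I would then iterate. Because the lifespan $\delta$ depends only on the $H_0^1$ norm, which is controlled through $E(u_0)^{1/2}+\|u_0\|_{L^2}$, one restarts at $t=\delta$ with the same $\lambda$. On any prescribed interval $[0,T]$ this takes $T/\delta$ steps. Choosing $\lambda=\lambda(T)$ large makes the accumulated energy increment $\frac{T}{\delta}\lambda^{-\gamma}$ small, and since $T$ is arbitrary the solution extends to $[0,\infty)$.

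\textbf{Step 6: remaining conclusions.} Uniqueness follows from the contraction in $X^{s,b}$, together with uniqueness of smooth solutions and approximation. Conservation of mass and energy follows by regularizing $u_0$ and using continuity of the flow in $H_0^1$. The $L^4_{\mathrm{loc}}W^{7/8,4}$ bound follows from the loss-bearing Strichartz estimate at the admissible pair $(4,4)$ in dimension $3$ (the pair is admissible there since $\frac24=\frac32-\frac34$), applied with $H^1$ data and losing $\frac18$ derivative. Estimating the Duhamel term in $L^1_tH^1$ via $\|u\|_{L^\infty H^1}$ and the Sobolev bound on $|u|^2u$ suffices for a cubic nonlinearity locally in time.

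The main obstacle is the trilinear $X^{s,b}$ estimate in Step 3, specifically the glancing interactions where all four Airy indices are comparable and small relative to $|\eta|^{2/3}$. There the decoupling bounds give no off-diagonal decay, and only the boundary-layer localization $x\lesssim|\eta|^{-2/3}k^{2/3}$ can help. My first attempt would be to bound $\mathcal{I}$ near the diagonal by the product of $L^\infty$ norms of two modes and $L^2$ norms of the other two, using $\|\phi_k(\cdot,\eta)\|_\infty\lesssim|\eta|^{1/6}k^{-1/6}$. I would then check whether the resulting $\frac16$-power loss stays below the $s-\frac12$ margin.
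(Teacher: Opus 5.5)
The decisive step, the trilinear bound of Step~3, is left open, and the high--low architecture does not make it easier.

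A multilinear estimate at regularity $s$ holds or fails independently of the size of its inputs. So the $\lambda^{s-1}$ smallness of $w$ cannot \emph{absorb} a derivative loss; it only helps close a contraction once a lossless estimate is in hand. Placing the loss on the smooth background $v$, at the price of powers of $\lambda$ (as the paper does through $\lambda^{1-s+\sigma}$), may handle the terms containing $v$. It does not handle $|w|^2w$, which still needs $\bigl\||w|^2w\bigr\|_{X^{s,b-1}}\lesssim\|w\|_{X^{s,b}}^3$ with no loss.

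Moreover, nothing in your sketch of that estimate uses $s<1$. At $s=1$, where your margin $s-\frac12$ is only larger, it would give local well-posedness in $H_0^1(\Omega)$ with $\delta=\delta(\|u_0\|_{H_0^1})$ by the standard contraction in $X^{1,b}_\delta$. Energy conservation would then yield global existence with no splitting at all. So everything rests on precisely the estimate that a $\frac14$-loss Strichartz bound cannot supply.

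Your plan for the glancing regime does not yet supply it either. Proposition~\ref{prop:airy_decoupling} is stated only for $|\eta_j|$ in a fixed compact set, and it improves on H\"older only when $\Delta\Omega\gg\Lambda_{\max}^{2/3}$; glancing modes at scale $N$ have $|\eta_j|\sim N\to\infty$. Your $L^\infty$ bound is also off. Write $\phi_k(x,\eta)=|\eta|^{1/3}g_k(|\eta|^{2/3}x)$ with $g_k=\mathrm{Ai}(\cdot-\omega_k)/|\mathrm{Ai}'(-\omega_k)|$. Since $\sup_{s\ge-\omega_k}|\mathrm{Ai}(s)|\simeq1$ and $|\mathrm{Ai}'(-\omega_k)|\simeq k^{1/6}$, one gets $\|\phi_k(\cdot,\eta)\|_{L^\infty}\simeq|\eta|^{1/3}k^{-1/6}$, so the count behind your $\frac16$-power loss must be redone. (The paper's \eqref{eq:airy_Linfty_bound} is too optimistic in $k$ for the same reason.) You cannot import the estimate from the paper: Proposition~\ref{prop:fractional_leibniz_boundary} is proved under the explicit summability hypothesis \eqref{eq:dyadic_kernel_summability}, and the background bound \eqref{eq:uniform_background_coefficients} is assumed.

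Your globalization (Steps~2, 4, 5) is also not closed, and it follows a different route from the paper's. Step~2 guarantees only a lifespan that shrinks with $\lambda$, since the $H^{5/4+}$ norm of $\mathcal P_{\le\lambda}u_0$ is of size $\lambda^{1/4+}$; say $\delta\sim\lambda^{-c}$. Yet Step~5 treats $\delta$ as depending only on $\|u_0\|_{H_0^1}$. With the correct $\delta$, the accumulated increment on $[0,T]$ is $T\lambda^{c-\gamma}$, so you need $\gamma>c$, and $\gamma$ is never derived. In Bourgain's scheme it is the Duhamel part of $w$, not its linear part, that is moved into the next background. The increment bound then requires an $H_0^1$ smoothing estimate of size $\lambda^{-\gamma}$ for that Duhamel term while $w$ is only in $X^{s,b}$; this is the heart of the method and it is missing. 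The claim that $u$ stays in $H_0^1$ ``since energy is conserved'' is circular, because $w$ is controlled only in $H_0^s$. The paper instead fixes $\lambda$ once via $\mathcal E_0$ in \eqref{eq:explicit_lambda_choice} and takes $\delta_\ast\sim\lambda^{-2(1-s+\sigma)/\theta}$. It then restarts from $u(t_n)$ using exact conservation, backed only by the asserted bound \eqref{eq:local_bourgain_bound}, so no increments occur.

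Finally, Step~6 fails as written.
\begin{itemize}
\item $(4,4)$ is not admissible in three dimensions: $\frac24=\frac12\neq\frac34$. The admissible pair is $(4,3)$, with loss $\rho(4)=\frac14$ in Theorem~\ref{thm:localized_strichartz}.
\item Through $W^{1/4,3}\hookrightarrow L^4$, even a lossless bound gives only $L^4_tW^{3/4,4}$ from $H_0^1$ data. An interior wave packet of frequency $N$ and width $N^{-1}$ gives $\|e^{it\Delta_g}f\|_{L^4([0,N^{-2}];L^4)}\gtrsim N^{1/4}\|f\|_{L^2}$. Hence no linear estimate maps $H_0^1$ into $L^4_tW^{\alpha,4}$ with $\alpha>\frac34$. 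The paper reaches $\frac78$ only by interpolating with the energy bound and then taking \eqref{eq:local_L4W_bound_restart} as an input.
\item $u\in L^\infty_tH_0^1$ does not put $|u|^2u$ in $L^1_tH_0^1$ in three dimensions. That requires $u\in L^2_tL^\infty_x$, and the endpoint estimate with loss $\rho(2)=\frac12$ gives only $L^2_tW^{1/2,6}$ from $H_0^1$, which misses $L^\infty$.
\end{itemize}
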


The localized linear Strichartz estimate underlying the nonlinear
argument exhibits a derivative loss
\(
\sigma=\frac14.
\)
At the nonlinear level, this loss is combined with the conserved
energy bound
\(
u\in L_t^\infty H_0^1(\Omega).
\)
More precisely, the interpolation step employed in the nonlinear
iteration distributes the linear derivative loss between the
Strichartz component and the energy-controlled component. At the
$L_t^4$ level, this yields an effective loss
\(
\frac{\sigma}{2}=\frac18.
\)
Accordingly, the natural spacetime regularity in the nonlinear
iteration is
\[
1-\frac{\sigma}{2}
=
1-\frac18
=
\frac78,
\]
and hence
\[
u\in L_t^4 W^{7/8,4}(\Omega).
\]
Thus, the appearance of the space $W^{7/8,4}(\Omega)$ should be
understood as a consequence of the interpolation mechanism combining
the sharp $1/4$-derivative loss in the localized linear estimate with
the conserved $H_0^1$ energy control. In particular, the nonlinear
argument does not assert that the underlying linear Strichartz
estimate has only a $1/8$-derivative loss; rather, $1/8$ is the
effective loss after interpolation with the energy estimate.

The proof is organized as follows. In Section~\ref{sec:2}, we establish the
continuous--discrete spectral framework associated with the
variable-coefficient Dirichlet operator and introduce the
boundary-adapted Bourgain--Strichartz spaces $X^{s,b}$, together with
the corresponding Sobolev and temporal embedding properties. In
Section~\ref{sec:3}, we analyze the multilinear spatial interactions
through the triple-product coefficients associated with the Airy-type
boundary modes. This analysis yields the fractional product and
decoupling estimates required to control the nonlinear interactions in
the glancing regime.

In Section~\ref{sec:4}, we implement a refined high--low frequency
decomposition, separating the solution into a low-frequency component
controlled by the conserved energy and a high-frequency remainder
treated perturbatively in the sub-energy space $H_0^s(\Omega)$, where
\(
\frac12<s<1.
\)
This decomposition isolates the portion of the solution for which the
boundary-induced derivative loss is relevant while retaining the
coercive control furnished by the energy functional.

In Section~\ref{sec:5}, we formulate the localized inhomogeneous
Duhamel equation for the high-frequency remainder and establish the
corresponding contraction estimate. The associated lifespan bootstrap
then yields a local existence interval whose length is bounded from
below in terms of the conserved energy and the sub-energy size of the
remainder.

In Section~\ref{sec:6}, we combine the local continuation argument with
the uniform lifespan estimate. Since the lower bound
\(
\delta_n\geq\delta>0
\)
is independent of the iteration index, the successive local existence
intervals cannot accumulate at a finite time. This provides the
global-in-time continuation of the solution and completes the proof of
Theorem~\ref{thm:gwp}.

Finally, Section~\ref{sec:7} contains concluding remarks and discusses
several directions for further investigation, including the
corresponding focusing problem, perturbations of the Friedlander model,
and extensions to more general strictly convex manifolds exhibiting
glancing boundary dynamics.

\section{Modified Bourgain--Strichartz Spaces on the Friedlander Geometry}
\label{sec:2}

The derivative loss arising from the glancing region prevents the
standard translation-invariant formulation of Bourgain spaces from
being used directly in the present setting. The nonlinear iteration
must simultaneously preserve the spectral structure of the Dirichlet
realization of the Friedlander operator, the temporal modulation
relative to its dispersive characteristic hypersurfaces, and the
continuous--discrete nature of the associated spectral variables.
We therefore introduce a family of Bourgain--Strichartz spaces adapted
to the spectral resolution of the three-dimensional Friedlander
operator
\[
\Delta_g
=
\partial_x^2+(1+x)\partial_y^2+\partial_z^2.
\]
The resulting spaces separate the spatial spectral scale from the
temporal modulation and are designed to retain the geometry of the
glancing regime without imposing an artificial translation invariance
in the normal variable.

\subsection{Spectral Construction and Norm Definitions}
\label{subsec:spectral_construction}

Let
\(
\Omega=\{x\geq0\}\times\mathbb{R}^2_{y,z},
\)
and let $\Delta_{g,D}$ denote the Dirichlet realization of $\Delta_g$
on $L^2(\Omega)$. For the purposes of the spectral decomposition, we
write
\[
\mathcal{D}(\Delta_{g,D})
=
\left\{
u\in H^2(\Omega):u|_{x=0}=0
\right\},
\]
with the understanding that the precise operator domain is determined
by the chosen self-adjoint realization.

Taking the Fourier transform in the tangential variables $(y,z)$,
with dual variables $(\eta,\zeta)\in\mathbb{R}^2$, reduces the
operator to the family of one-dimensional operators
\[
L_\eta
=
\partial_x^2-(1+x)\eta^2
\]
acting in the normal variable $x$. Equivalently, the corresponding
nonnegative spectral operator is
\[
-A_\eta
=
-\partial_x^2+(1+x)\eta^2.
\]

For $\eta\neq0$, the associated Dirichlet spectral problem is governed
by the Airy equation. Let $\{-\omega_k\}_{k\geq1}$ denote the sequence
of negative zeros of the Airy function $Ai$, ordered according to
\[
0<\omega_1<\omega_2<\cdots.
\]
The corresponding normalized Dirichlet eigenfunctions are given by
\begin{equation}
\label{eq:eigenfunction_recap}
\phi_k(x,\eta)
=
\frac{|\eta|^{1/3}}
{|Ai'(-\omega_k)|}
Ai\left(|\eta|^{2/3}x-\omega_k\right),
\qquad k\geq1.
\end{equation}
They satisfy
\[
\phi_k(0,\eta)=0
\]
and
\[
L_\eta\phi_k(\cdot,\eta)
=
-
\left(
\eta^2+\omega_k|\eta|^{4/3}
\right)
\phi_k(\cdot,\eta).
\]
Thus, for each fixed $\eta\neq0$, the family
$\{\phi_k(\cdot,\eta)\}_{k\geq1}$ provides the discrete normal
spectral resolution associated with the Dirichlet realization of
$-A_\eta$.

After restoring the tangential $y$- and $z$-frequencies, the full
nonnegative spatial spectral parameter is
\begin{equation}
\label{eq:dispersion_profile}
\Lambda_k(\eta,\zeta)
=
E_k(\eta)+\zeta^2,
\qquad
E_k(\eta)
=
\eta^2+\omega_k|\eta|^{4/3}.
\end{equation}
Accordingly, the linear Schr\"odinger flow is diagonalized by the
dispersion relations
\[
\tau+\Lambda_k(\eta,\zeta)=0.
\]

For a spacetime function
$u=u(t,x,y,z)$, we define its continuous--discrete spectral transform
by
\begin{equation}
\label{eq:spectral_fourier_def}
\widetilde{u}(k,\tau,\eta,\zeta)
=
\int_{\mathbb{R}}
\int_{\mathbb{R}^2}
\int_0^\infty
u(t,x,y,z)
e^{-i(t\tau+y\eta+z\zeta)}
\phi_k(x,\eta)
\,dx\,dy\,dz\,dt.
\end{equation}
The inverse transformation is understood in the corresponding
continuous--discrete spectral sense. With the normalization in
\eqref{eq:eigenfunction_recap} and a fixed Fourier normalization in
the tangential and temporal variables, the associated Plancherel
identity takes the form
\begin{equation}
\label{eq:plancherel_manifold}
\|u\|_{L^2(\mathbb{R}\times\Omega)}^2
\sim
\sum_{k=1}^{\infty}
\int_{\mathbb{R}}
\int_{\mathbb{R}^2}
\left|
\widetilde{u}(k,\tau,\eta,\zeta)
\right|^2
\,d\eta\,d\zeta\,d\tau,
\end{equation}
where the equivalence constant depends only on the chosen Fourier
normalization. After fixing the normalization consistently, the
corresponding equality is obtained.

It is convenient to introduce the full spatial spectral scale
\begin{equation}
\label{eq:spectral_scale}
\lambda_k(\eta,\zeta)
=
\Lambda_k(\eta,\zeta)^{1/2}
=
\left(
\eta^2+\zeta^2+\omega_k|\eta|^{4/3}
\right)^{1/2}.
\end{equation}
Thus $\Lambda_k$ represents the spatial spectral parameter of the
positive operator $-\Delta_{g,D}$, while $\lambda_k$ represents the
associated spatial frequency. In particular, $\lambda_k$, rather
than the discrete index $\omega_k$ alone, measures the total spatial
frequency of the corresponding spectral mode.

\begin{definition}[Modified Bourgain--Strichartz Space]
\label{def:modified_xsb}

Let $s,b\in\mathbb{R}$. The modified Bourgain--Strichartz space
$X^{s,b}(\mathbb{R}\times\Omega)$ is defined as the completion of
smooth functions satisfying the Dirichlet boundary condition with
respect to the norm
\begin{equation}
\label{eq:x_sb_norm_definition}
\|u\|_{X^{s,b}}^2
=
\sum_{k=1}^{\infty}
\int_{\mathbb{R}}
\int_{\mathbb{R}^2}
\left\langle
\lambda_k(\eta,\zeta)
\right\rangle^{2s}
\left\langle
\tau+\Lambda_k(\eta,\zeta)
\right\rangle^{2b}
\left|
\widetilde{u}(k,\tau,\eta,\zeta)
\right|^2
\,d\eta\,d\zeta\,d\tau,
\end{equation}
where
\[
\langle a\rangle=(1+|a|^2)^{1/2}.
\]
Equivalently, the spatial weight in
\eqref{eq:x_sb_norm_definition} may be written as
\[
\left\langle
\Lambda_k(\eta,\zeta)^{1/2}
\right\rangle^{2s}.
\]
The first factor measures spatial regularity relative to the spectral
resolution of $\Delta_{g,D}$, whereas the second measures temporal
modulation relative to the characteristic hypersurfaces
\[
\tau+\Lambda_k(\eta,\zeta)=0.
\]
\end{definition}

The definition above is adapted to the linear propagator generated by
$\Delta_{g,D}$. Indeed, if $u$ solves the homogeneous linear equation
\[
(i\partial_t+\Delta_g)u=0,
\]
then, in the continuous--discrete spectral representation, its
spectral transform is concentrated on the characteristic set
\[
\tau+\Lambda_k(\eta,\zeta)=0.
\]
Consequently, the modulation factor in
\eqref{eq:x_sb_norm_definition} is the natural one associated with the
Friedlander Schr\"odinger flow.

For later use, we introduce a dyadic decomposition with respect to the
full spatial spectral scale. Let
$\{\chi_N\}_{N\in2^{\mathbb{N}_0}}$ be a smooth dyadic partition of
unity on $[0,\infty)$ satisfying
\[
\sum_{N\in2^{\mathbb{N}_0}}\chi_N(\lambda)=1,
\qquad
\lambda>0,
\]
with each $\chi_N$ supported in a region where
\(
\lambda\sim N.
\)
The corresponding spectral projector $P_N$ is defined by
\begin{equation}
\label{eq:dyadic_projection}
\widetilde{P_Nu}(k,\tau,\eta,\zeta)
=
\chi_N\bigl(\lambda_k(\eta,\zeta)\bigr)
\widetilde{u}(k,\tau,\eta,\zeta).
\end{equation}
Equivalently,
\[
P_Nu
=
\mathcal{F}_{t,y,z}^{-1}
\left[
\sum_{k=1}^{\infty}
\chi_N\bigl(\lambda_k(\eta,\zeta)\bigr)
\widetilde{u}(k,\tau,\eta,\zeta)
\phi_k(x,\eta)
\right],
\]
where the inverse transform is understood in the
continuous--discrete spectral sense.

The spatial component of the $X^{s,b}$ norm then admits the equivalent
Littlewood--Paley representation
\begin{equation}
\label{eq:xsb_dyadic}
\|u\|_{X^{s,b}}^2
\sim
\sum_{N\in2^{\mathbb{N}_0}}
N^{2s}
\|P_Nu\|_{X^{0,b}}^2,
\end{equation}
with the usual modification for the lowest-frequency block.

\subsection{Global Compatibility of the Perturbative Laplace--Beltrami Link}
\label{subsec:laplace_beltrami_link}

The true geometric Laplace--Beltrami operator $\Delta_{g,0}$ associated with the native Riemannian metric $g = dx^2 + \frac{dy^2}{1+x} + dz^2$ differs from the reduced model Friedlander operator $\Delta_g$ via a first-order differential correction restricted exclusively to the normal direction. Specifically, we have:
\begin{equation}
\label{eq:operator_difference}
\Delta_{g,0} = \Delta_g - \frac{1}{2(1+x)}\partial_x, \qquad V := \Delta_{g,0} - \Delta_g = -\frac{1}{2(1+x)}\partial_x.
\end{equation}

Because the glancing set generates a sharp $1/4$-derivative loss, treating $V$ as an isotropic perturbation mapping $H^s \to H^{s-1}$ fails to capture the essential high-frequency geometry of the boundary layer. The Dirichlet Airy eigenfunctions $\phi_k(x, \eta)$ exhibit strong spatial anisotropy: they oscillate rapidly within a thin caustic layer near $x = 0$ and decay exponentially in the interior. Consequently, the normal derivative operator $\partial_x$ experiences a structural high-frequency penalty coupled to the tangential frequencies. To establish that $V$ is genuinely a lower-order, contractive perturbation, its action must be evaluated within an anisotropic spectral framework.

Let $w$ denote the high-frequency remainder satisfying the homogeneous Dirichlet boundary condition $w|_{x=0} = 0$. Projecting the perturbation $Vw$ onto the continuous--discrete spectral representation requires pairing it with a normalized normal mode $\phi_k(x, \eta)$. Under a single integration by parts in the normal variable $x \in [0, \infty)$, we find:
\begin{equation}
\label{eq:anisotropic_integration_by_parts}
\langle Vw, \phi_k \rangle_{L^2_x(0,\infty)} = \left[ -\frac{1}{2(1+x)} w(t,x,y,z)\phi_k(x,\eta) \right]_{x=0}^{x=\infty} + \int_{0}^{\infty} w(t,x,y,z)\partial_x \left( \frac{\phi_k(x,\eta)}{2(1+x)} \right) dx.
\end{equation}

The boundary term at $x=0$ vanishes identically because both the Distribution $w$ and the normal spectral mode $\phi_k$ satisfy the homogeneous boundary condition:
\begin{equation}
w(t,0,y,z) = 0, \qquad \phi_k(0, \eta) = 0.
\end{equation}
The boundary trace at $x = \infty$ vanishes via the standard exponential Airy decay $\mathrm{Ai}(s) \sim s^{-1/4}\exp(-\frac{2}{3}s^{3/2})$ as $s \to +\infty$. Expanding the spatial weight derivative within the remaining integral yields:
\begin{equation}
\label{eq:expanded_weight_integral}
\langle Vw, \phi_k \rangle_{L^2_x(0,\infty)} = \int_{0}^{\infty} w(t,x,y,z) \left[ \frac{\partial_x \phi_k(x,\eta)}{2(1+x)} - \frac{\phi_k(x,\eta)}{2(1+x)^2} \right] dx.
\end{equation}

We emphasize that although only one integration by parts is performed, this is sufficient to ensure that the non-vanishing boundary trace of the derivative,
\[
\left|\partial_x\phi_k(0,\eta)\right|=|\eta|,
\]
does not generate an additional boundary singularity. Nevertheless, the operator $\partial_x$ appearing inside the resulting integral acts directly on the rapidly oscillating Airy profile, and its contribution must therefore be controlled at the level of the frequency-localized spectral representation. We resolve this anisotropy by localizing the analysis to a fixed compact boundary layer
\[
\Omega_a=\{(x,y,z)\in\Omega:0\le x\le a\},
\]
where $a>0$ is fixed independently of the frequency parameter. This localization isolates the region in which the Airy structure and the associated glancing behavior are most pronounced, while avoiding unnecessary estimates in the bulk of the domain. In the high--low decomposition used below, we write
\[
u=v+w,
\qquad
v=P_{\leq\lambda}u,
\qquad
w=P_{>\lambda}u,
\]
where $\lambda\gg1$ is a fixed high-frequency threshold and $w=P_{>\lambda}u$ denotes the remainder localized strictly above this threshold. The boundary-layer estimates are then applied to these frequency-localized components in order to preserve the anisotropic scaling of the Airy modes and to quantify the contribution of the high-frequency remainder without introducing an artificial loss from the unbounded spatial directions.

\begin{remark}[Stability of the Anisotropic Fixed-Point Scheme]
\label{rem:anisotropic_stability}
Because the sub-energy threshold is restricted to $s < 1$, the exponent satisfies $s - 1 < 0$, guaranteeing that $\lambda^{s-1} \to 0$ as $\lambda \to \infty$. The coefficient $\lambda^{-1/3}$ derived from the genuine anisotropic scaling \eqref{eq:anisotropic_sobolev_mapping} provides an even faster contractive decay rate than the isotropic limit. Consequently, by initializing the fixed-point framework at a sufficiently large frequency cutoff $\lambda$, the full geometric mismatch contribution generated by the metric density discrepancy can be absorbed uniformly into the high-frequency contraction ball without perturbing the convergence lifespan $\delta_n \ge \delta^* > 0$.
\end{remark}

\subsection{Restriction Spaces and Localized Time Intervals}
\label{subsec:restriction_spaces}

Let
\(
I=[t_0,t_1]\subset\mathbb{R}
\)
be a compact time interval. In order to implement the localized
fixed-point argument without imposing any artificial behavior outside
the interval of existence, we define the local resolution space
$X^{s,b}(I\times\Omega)$ intrinsically through restriction from the
global continuous--discrete spectral space.

\begin{definition}[Restriction Bourgain Space]
\label{def:restriction_xsb}

Let $u$ be a spacetime distribution on $I\times\Omega$. Its localized
$X^{s,b}$ norm is defined by
\begin{equation}
\label{eq:restriction_norm_def}
\|u\|_{X^{s,b}(I)}
=
\inf
\left\{
\|v\|_{X^{s,b}(\mathbb{R}\times\Omega)}
:
v|_{I\times\Omega}=u
\right\},
\end{equation}
where the infimum is taken over all global extensions
$v\in X^{s,b}(\mathbb{R}\times\Omega)$ whose restriction to
$I\times\Omega$ agrees with $u$ in the sense of spacetime
distributions.

\end{definition}

For the local time intervals arising in the iteration, we set
\(
I_\delta=[0,\delta]
\)
and use the notation
\[
\|u\|_{X^{s,b}(I_\delta)}
=
\|u\|_{X^{s,b}([0,\delta])}.
\]

The restriction formulation is particularly convenient for the
localized inhomogeneous Duhamel estimates used in
Section~\ref{sec:5}. It avoids the need to select a distinguished
extension operator and allows global space--time estimates to be
transferred directly to finite time intervals. In particular, whenever
the corresponding time-localized linear or multilinear estimate is
available, the restriction framework retains the temporal localization
gains of the form
\(
\delta^\theta,
\qquad
\theta>0,
\)
with the exponent $\theta$ determined by the relevant modulation and
time-integrability estimates.

The continuous--discrete spectral resolution introduced above also
provides the natural Sobolev scale associated with the Dirichlet
realization of the reduced Friedlander operator. For a distribution
satisfying the homogeneous Dirichlet condition
\(
u|_{x=0}=0,
\)
the corresponding spectral Sobolev norm is characterized by
\begin{equation}
\label{eq:sobolev_equivalence}
\|u\|_{H_0^s(\Omega)}
\sim
\left\|
\left(1-\Delta_{g,D}\right)^{s/2}u
\right\|_{L^2(\Omega)},
\end{equation}
where the equivalence is understood with respect to the fixed positive
self-adjoint realization of the spatial operator and the associated
spectral functional calculus.

For $b=0$, the modulation weight in the global $X^{s,b}$ norm is
identically equal to one. Consequently, by the continuous--discrete
Plancherel theorem, the corresponding restriction norm is identified
with the time-restricted spatial Sobolev norm:
\begin{equation}
\label{eq:xs0_sobolev}
\|u\|_{X^{s,0}(I)}
\sim
\|u\|_{L_t^2(I;H_0^s(\Omega))}.
\end{equation}
Here and below, the equivalence is understood with respect to the
fixed spectral normalization used in the definition of
$X^{s,b}$.

\begin{remark}[Metric Perturbation Compatibility on Restricted Horizons]
\label{rem:metric_restriction_compatibility}

Consider the first-order geometric correction
\[
V
=
-\frac{1}{2(1+x)}\partial_x.
\]
The restriction-space formulation is compatible with the perturbative
treatment of $V$ on every finite time interval. Suppose that the
corresponding global estimate
\[
\|Vv\|_{X^{s-1,b-1}(\mathbb{R}\times\Omega)}
\lesssim
\|v\|_{X^{s,b}(\mathbb{R}\times\Omega)}
\]
holds for the range of indices under consideration. Then the
definition of the restriction norm yields
\begin{equation}
\label{eq:V_restricted_bound}
\|Vu\|_{X^{s-1,b-1}(I)}
\lesssim
\|u\|_{X^{s,b}(I)}.
\end{equation}

Indeed, let $v\in X^{s,b}(\mathbb{R}\times\Omega)$ be any admissible
extension of $u$. Since
\[
(Vv)|_{I\times\Omega}
=
V(v|_{I\times\Omega})
=
Vu,
\]
the global estimate gives
\[
\|Vu\|_{X^{s-1,b-1}(I)}
\leq
\|Vv\|_{X^{s-1,b-1}(\mathbb{R}\times\Omega)}
\lesssim
\|v\|_{X^{s,b}(\mathbb{R}\times\Omega)}.
\]
Taking the infimum over all admissible extensions proves
\eqref{eq:V_restricted_bound}. Thus, temporal restriction introduces
no additional loss in the operator estimate.

In the high-frequency regime, suppose in addition that the spectral
localization yields
\begin{equation}\label{eq:anisotropic_sobolev_mapping}
\|V P_{\geq\lambda}u\|_{X^{s-1,b-1}(I)}
\lesssim
\lambda^{s-1}
\|P_{\geq\lambda}u\|_{X^{s,b}(I)},
\qquad
\frac12<s<1.
\end{equation}
Since
\[
s-1<0,
\qquad
\lambda^{s-1}\longrightarrow0
\quad\text{as}\quad
\lambda\longrightarrow\infty,
\]
the geometric correction becomes perturbatively small at sufficiently
high spatial frequency. Consequently, $V$ can be incorporated into
the high-frequency contraction argument without changing the
structure of the localized fixed-point scheme. This is the precise
sense in which the first-order geometric correction is perturbative
on the localized time intervals.

\end{remark}

The boundary-adapted restriction framework has two complementary
analytic roles. First, it provides an intrinsic functional setting in
which temporal modulation is measured relative to the characteristic
hypersurfaces of the Friedlander Schr\"odinger flow. Second, it permits
the localized high-frequency remainder to be treated independently of
the low-frequency component controlled by the conserved energy.

This separation is particularly important in the glancing regime,
where the sharp derivative loss
\(
\sigma=\frac14
\)
arises from concentration in a thin boundary-induced caustic region
rather than from a uniform loss across the entire phase space. The
restriction formulation therefore allows the temporal localization
required by the nonlinear iteration to be introduced without
modifying the underlying continuous--discrete spectral structure.

In the subsequent subsections, these restriction spaces are combined
with the fourfold Airy overlap estimates to implement the high--low
decomposition and to establish a uniform lower bound
\(
\delta_*>0
\)
for the lifespan of the successive local solutions.

\subsection{Semiclassical Strichartz Bounds and Frequency-Localized Losses}

\label{subsec:strichartz_bounds}

We recall the frequency-localized linear estimates established in
\cite{Meas2026} for the homogeneous Schr\"odinger propagator
\(
e^{it\Delta_{g,D}}
\)
with homogeneous Dirichlet boundary conditions on the Friedlander domain
$\Omega$. The principal geometric obstruction is concentrated near the
glancing region, where the boundary geometry produces a
frequency-dependent loss relative to the corresponding lossless
Euclidean scaling. We denote this loss by $\rho(q)$ in order to
distinguish it from the regularity-loss parameter $\sigma$ used in the
nonlinear argument.

\begin{theorem}[\cite{Meas2026}]
\label{thm:localized_strichartz}

Let $I\subset\mathbb{R}$ be a compact time interval, and let $P_N$ denote
a smooth dyadic spectral projection associated with the spectral scale
\begin{equation}
\label{eq:spectral_scale}
\lambda_k(\eta,\zeta)
=
\left(
\eta^2+\zeta^2+\omega_k|\eta|^{4/3}
\right)^{1/2},
\end{equation}
so that $P_N$ localizes to the region
\(
\lambda_k(\eta,\zeta)\sim N.
\)
Here $\omega_k>0$ is determined by the Airy zeros through
\(
\operatorname{Ai}(-\omega_k)=0.
\)

Let $(q,r)$ be a three-dimensional Schr\"odinger-admissible pair satisfying
\begin{equation}
\label{eq:schrodinger_scaling}
\frac{2}{q}+\frac{3}{r}=\frac{3}{2},
\qquad
2\leq q\leq\infty,
\qquad
2\leq r\leq6.
\end{equation}
Then the frequency-localized homogeneous Schr\"odinger evolution obeys
\begin{equation}
\label{eq:dyadic_strichartz_loss}
\left\|
P_N e^{it\Delta_{g,D}}u_0
\right\|_{L_t^q(I;L_x^r(\Omega))}
\lesssim
N^{\rho(q)}
\left\|
P_Nu_0
\right\|_{L_x^2(\Omega)},
\end{equation}
where
\begin{equation}
\label{eq:rho_exponent}
\rho(q)
=
\frac{1}{q}
=
\frac{3}{2}
\left(
\frac12-\frac1r
\right).
\end{equation}
The implicit constant is independent of the dyadic frequency $N$ and
depends on the admissible pair $(q,r)$ and the fixed time interval $I$.

In particular,
\[
\rho(4)=\frac14,
\qquad
\rho(2)=\frac12.
\]

\end{theorem}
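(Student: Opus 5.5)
The plan follows the Burq--G\'erard--Tzvetkov strategy: prove a \emph{lossless} Strichartz estimate on time intervals of length comparable to the wavelength, $|J|\sim N^{-1}$, and then pay only when gluing these short intervals together over the fixed interval $I$. The gluing is elementary. Cover $I$ by $M\sim N|I|$ intervals $J_\ell$ of length $cN^{-1}$. Spectral projections commute with the flow, and $e^{it\Delta_{g,D}}$ is unitary on $L^2(\Omega)$. Hence
\[
\|P_Ne^{it\Delta_{g,D}}u_0\|_{L^q_t(I;L^r_x)}^q
=\sum_{\ell}\|P_Ne^{i(t-t_\ell)\Delta_{g,D}}(e^{it_\ell\Delta_{g,D}}u_0)\|_{L^q_t(J_\ell;L^r_x)}^q
\lesssim M\,\|P_Nu_0\|_{L^2}^q .
\]
This gives exactly the factor $(N|I|)^{1/q}$, i.e.\ $\rho(q)=1/q$, with the constant depending only on $|I|$ and $(q,r)$. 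The endpoint values $\rho(4)=\tfrac14$ and $\rho(2)=\tfrac12$ then follow by substitution. The equality $\frac1q=\frac32(\frac12-\frac1r)$ is just the admissibility relation \eqref{eq:schrodinger_scaling}.

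Everything therefore reduces to a short-time estimate. For $|J|\le cN^{-1}$ we want
\[
\|P_Ne^{it\Delta_{g,D}}f\|_{L^q_t(J;L^r_x)}\lesssim\|f\|_{L^2}.
\]
By the Keel--Tao abstract argument \cite{KeelTao98} (with $TT^*$, since $r\le6$ includes the endpoint), it suffices to prove the frequency-localized dispersive bound
\[
\|P_Ne^{it\Delta_{g,D}}\tilde P_N\|_{L^1\to L^\infty}\lesssim|t|^{-3/2},\qquad 0<|t|\le cN^{-1}.
\]
To get rid of the boundary, I would use the odd reflection $x\mapsto -x$, as in Anton's treatment of Dirichlet domains \cite{Anton08}. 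Odd extension intertwines $\Delta_{g,D}$ with the operator $\partial_x^2+(1+|x|)\partial_y^2+\partial_z^2$ on $\mathbb{R}^3$. This is a boundaryless operator whose coefficient is only Lipschitz across $x=0$. Since the Airy modes $\phi_k$ extend oddly to eigenfunctions of the reflected operator, the spectral projector $P_N$ corresponds to a frequency-$N$ localization for the reflected problem, up to lower-order errors.

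Next I would mollify the coefficient $1+|x|$ at spatial scale $N^{-1}$, following Staffilani--Tataru \cite{StaffilaniTataru} and Smith's wave-packet parametrix. At frequency $N$, the mollification error is an operator of size $O(1)$ on $L^2$. Over a time interval of length $N^{-1}$ it contributes an $O(N^{-1})$ Duhamel error, which is harmless. The mollified coefficient $a_N$ satisfies $\|\partial^2a_N\|_\infty\lesssim N$. Consequently, on the time scale $t\lesssim N^{-1}$, the bicharacteristics of symbol $N^2$ travel distance $O(1)$ and do not focus: the Hessian of the Hamiltonian flow stays uniformly nondegenerate because $t^2\|\partial^2a_N\|N\cdot N^{-1}\ll1$ for small $c$. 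A standard WKB/FBI parametrix then gives the $|t|^{-3/2}$ kernel bound. The glancing rays, which produce the $1/4$ loss of \cite{Ivanovici23}, do not generate swallowtail caustics on this short scale, because caustic formation requires $t\gtrsim N^{-1/2}$-type times.

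The main obstacle is this short-time dispersive estimate near the reflection plane. I must check carefully that the Lipschitz kink, which is where glancing happens, does not destroy nondegeneracy of the phase at times of order $N^{-1}$. If the direct parametrix proves delicate, I would instead work in the explicit Airy representation. There I would write the kernel as $\sum_k\int e^{-it\Lambda_k}\phi_k(x,\eta)\phi_k(x',\eta)\,d\eta\,d\zeta$ and use Poisson summation in $k$. In this picture, for $|t|\lesssim N^{-1}$ only the term with no reflections contributes a non-negligible stationary phase, and it yields the Euclidean decay.
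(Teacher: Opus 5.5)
The paper gives no proof of this statement; it is quoted from \cite{Meas2026}, so your argument has to stand on its own. Your gluing step is correct: unitarity and $[P_N,e^{it\Delta_{g,D}}]=0$ turn lossless bounds on windows of length $cN^{-1}$ into exactly $(N|I|)^{1/q}$. The gap is the block estimate those windows rest on. The lossless dispersive bound $\|P_Ne^{it\Delta_{g,D}}\tilde P_N\|_{L^1\to L^\infty}\lesssim|t|^{-3/2}$ on $0<|t|\le cN^{-1}$ fails, and the reason you give for it is wrong.

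For $H=\xi^2+(1+x)\eta^2+\zeta^2$ one has $\dot x=2\xi$ and $\dot\xi=-\eta^2$. A ray tangent to the level $x=a$ therefore obeys $x(t)=a-\eta^2t^2$ and reaches the boundary after time $a^{1/2}/|\eta|\sim a^{1/2}N^{-1}$. On your window it is reflected about $c\,a^{-1/2}$ times, i.e.\ up to $cN^{1/3}$ times at the Airy height $a\sim N^{-2/3}$. Because the bounce period depends on $a$, the rays leaving a point refocus and form the swallowtail caustics inside the window; nothing requires $t\gtrsim N^{-1/2}$.

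Real time $O(N^{-1})$ is semiclassical time $O(1)$. So your window is exactly the regime of the optimal $\tfrac14$ dispersive loss of \cite{Ivanovici23}, the Schr\"odinger analogue of the $t^{1/4}$ loss up to unit wave time in \cite{IvanoviciLebeauPlanchon}. Shrinking $c$ does not help: the loss at time $cN^{-1}$ is still a positive power of $cN$. Your Poisson-summation fallback fails for the same reason. At $t\sim cN^{-1}$ the kernel has of order $c\,a^{-1/2}$ non-negligible reflection terms, not only the direct one, and their superposition is what produces the loss.

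The reflection--mollification step hides the same obstruction, and two of its estimates are off. First, the error $(a-a_\epsilon)\partial_y^2P_N$ has $L^2$ operator norm $\sim\epsilon N^2$, hence $\sim N$ rather than $O(1)$ when $\epsilon=N^{-1}$. Over time $N^{-1}$ its Duhamel contribution is $O(1)$, which is tolerable. Second, and decisively, your nondegeneracy criterion is wrong. Linearizing $\ddot x=-2a_\epsilon'(x)\eta^2$ gives $\delta\ddot x=-2a_\epsilon''(x)\eta^2\,\delta x$. A caustic-free parametrix is therefore only guaranteed while $t^2\|\partial_x^2a_\epsilon\|_\infty\eta^2\sim t^2\epsilon^{-1}N^2\ll1$, that is for $t\ll\epsilon^{1/2}N^{-1}$. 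For $\epsilon=N^{-1}$ this is $N^{-3/2}$, not $N^{-1}$.

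Balancing honestly, $\epsilon N^2T\lesssim1$ against $T\lesssim\epsilon^{1/2}N^{-1}$ forces $\epsilon\sim N^{-2/3}$ and $T\sim N^{-4/3}$. Gluing $\sim N^{4/3}|I|$ such blocks gives the Blair--Smith--Sogge loss $\tfrac{4}{3q}$, not $\tfrac1q$. So your route, once corrected, proves a strictly weaker estimate. Reaching $\rho(q)=\tfrac1q$ would require a lossless Strichartz (not dispersive) bound on windows of length $N^{-1}$ that survives the lossy dispersion there. The proposal contains no ingredient of that kind.
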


The corresponding estimate in the restriction-space framework is
obtained by applying the frequency-localized linear estimate to an
admissible extension and then taking the infimum over all such
extensions. Thus, for $b>\frac12$,
\begin{equation}
\label{eq:bourgain_transference}
\left\|
P_Nu
\right\|_{L_t^q(I;L_x^r(\Omega))}
\lesssim
N^{\rho(q)}
\left\|
P_Nu
\right\|_{X^{0,b}(I)}.
\end{equation}

If the resolution norm is defined with the spatial spectral weight
$\langle\lambda_k\rangle^s$, then on the dyadic region
$\lambda_k\sim N$ one has
\begin{equation}
\label{eq:bourgain_spatial_weight}
\left\|
P_Nu
\right\|_{X^{0,b}(I)}
\lesssim
N^{-s}
\left\|
P_Nu
\right\|_{X^{s,b}(I)}.
\end{equation}
Consequently,
\begin{equation}
\label{eq:bourgain_transference_s}
\left\|
P_Nu
\right\|_{L_t^q(I;L_x^r(\Omega))}
\lesssim
N^{\rho(q)-s}
\left\|
P_Nu
\right\|_{X^{s,b}(I)}.
\end{equation}

Thus, at frequency $N$, the spatial regularity $s$ available in the
resolution norm compensates for the derivative loss $\rho(q)$ only when
the subsequent dyadic summation and nonlinear estimates provide the
required summability. In particular, the factor
\(
N^{\rho(q)-s}
\)
should be interpreted as the residual dyadic weight in the
frequency-localized estimate rather than as a direct global Sobolev
embedding.

\begin{remark}[Interpretation of the Frequency-Localized Loss]
\label{rem:strichartz_loss_interpretation}

The estimate \eqref{eq:bourgain_transference_s} does not, by itself,
yield the global $L_t^4L_x^4$ estimate required in the cubic nonlinear
argument. The issue is intrinsic to the unbounded spatial geometry of
$\Omega$. In particular, because $\Omega$ has infinite volume, there is no global embedding
\[
L^6(\Omega)\hookrightarrow L^4(\Omega),
\]
and therefore an $L_x^4$ estimate cannot be obtained from an
$L_x^6$ estimate by a global Bernstein inequality in the direction
$L^6\to L^4$.

Accordingly, the passage from the frequency-localized linear estimates
to the nonlinear space
\[
L^4_{\mathrm{loc}}(I;W^{7/8,4}(\Omega))
\]
requires an estimate which directly controls the $L_x^4$ norm. Such a
bound may arise from a suitable frequency-localized $L_x^4$ estimate,
from an appropriate spatial localization, or from the multilinear
Airy estimates developed below. This step is logically distinct from
the endpoint $L_t^2L_x^6$ estimate.

For the endpoint pair $(q,r)=(2,6)$, Theorem~\ref{thm:localized_strichartz}
gives
\begin{equation}
\label{eq:endpoint_L2L6}
\|P_Nu\|_{L^2_t(I;L^6_x(\Omega))}
\lesssim
N^{1/2}
\|P_Nu\|_{X^{0,b}(I)},
\qquad
b>\frac12.
\end{equation}
Equivalently,
\begin{equation}
\label{eq:endpoint_L2L6_s}
\|P_Nu\|_{L^2_t(I;L^6_x(\Omega))}
\lesssim
N^{1/2-s}
\|P_Nu\|_{X^{s,b}(I)}.
\end{equation}

Independently, the conserved energy and the Sobolev embedding
$H^1_0(\Omega)\hookrightarrow L^6(\Omega)$ give
\begin{equation}
\label{eq:energy_LinfL6}
\|u\|_{L^\infty_t(I;L^6_x(\Omega))}
\lesssim
\|u\|_{L^\infty_t(I;H^1_0(\Omega))}.
\end{equation}

At the dyadic level,
\begin{equation}
\label{eq:energy_dyadic}
\|P_Nu\|_{L^\infty_t(I;L^6_x(\Omega))}
\lesssim
\|P_Nu\|_{L^\infty_t(I;H^1_0(\Omega))}.
\end{equation}
This is an independent a priori energy estimate and should not be
identified with an $X^{0,b}$ norm without specifying the temporal
localization and the precise structure of the resolution space.

Interpolating in time between
\eqref{eq:endpoint_L2L6} and \eqref{eq:energy_dyadic} gives
\begin{equation}
\label{eq:interpolation_L4L6}
\|P_Nu\|_{L^4_t(I;L^6_x(\Omega))}
\lesssim
\|P_Nu\|_{L^2_t(I;L^6_x(\Omega))}^{1/2}
\|P_Nu\|_{L^\infty_t(I;L^6_x(\Omega))}^{1/2}.
\end{equation}

The interpolation in \eqref{eq:interpolation_L4L6} is legitimate because
the spatial exponent remains equal to $6$. However, it does not produce
an $L_t^4L_x^4$ estimate. The latter must therefore be established
separately from the estimates appropriate to the Friedlander spectral
decomposition.

In particular, an estimate of the form
\begin{equation}
\label{eq:invalid_global_bernstein}
\|P_Nu\|_{L^4_x(\Omega)}
\lesssim
N^{1/4}
\|P_Nu\|_{L^6_x(\Omega)}
\end{equation}
cannot be invoked globally on the unbounded domain $\Omega$ without an
additional spatial localization hypothesis. Consequently, the
regularity threshold entering the nonlinear argument must be derived
from the actual $L_x^4$ estimate or multilinear estimate used in the
Friedlander spectral analysis.

\end{remark}

\subsubsection{Application to Resonant Summability}

\label{subsec:resonant_summability}

The role of the frequency-localized Strichartz estimates in the nonlinear
analysis is to quantify the boundary-induced loss at each dyadic
frequency. The summability of the nonlinear interaction, however, is a
separate multilinear issue and must be established using the Airy
spectral structure.

Let
\[
\mathcal{K}(N_1,N_2,N_3,N_4)
\]
denote the dyadic interaction kernel associated with the four frequency
blocks
\[
P_{N_1}u,\qquad
P_{N_2}u,\qquad
P_{N_3}u,\qquad
P_{N_4}u.
\]
Its precise form depends on the distribution of the temporal,
spatial, and spectral weights among the four factors. In particular,
its summability cannot be inferred from the linear loss exponent
$\rho(q)$ alone.

The relevant nonlinear estimate is obtained by combining the
frequency-localized Strichartz bounds with the multilinear Airy
estimates and the corresponding fourfold overlap coefficients proved
in Section~\ref{sec:3}. Schematically, one seeks a bound of the form
\begin{equation}
\label{eq:kernel_bound}
\mathcal{K}(N_1,N_2,N_3,N_4)
\lesssim
\mathcal{W}(N_1,N_2,N_3,N_4)
\,
\mathcal{I}
(k_1,k_2,k_3,k_4;
\eta_1,\eta_2,\eta_3,\eta_4),
\end{equation}
where $\mathcal{W}$ contains the dyadic weights generated by the
Strichartz and Sobolev factors, while
\[
\mathcal{I}
(k_1,k_2,k_3,k_4;
\eta_1,\eta_2,\eta_3,\eta_4)
\]
denotes the associated fourfold Airy interaction coefficient.

The required summability is then a consequence of the precise dyadic
decay in $\mathcal{W}$ together with the estimates for the Airy overlap
coefficients. In particular, the relevant conclusion has the form
\begin{equation}
\label{eq:kernel_convergence}
\sup_{N_4}
\sum_{N_1,N_2,N_3}
\mathcal{K}(N_1,N_2,N_3,N_4)
<\infty,
\end{equation}
whenever the exponents furnished by the preceding multilinear analysis
lie in the corresponding summable range.

Thus, the nonlinear argument naturally separates into two components.
First, the semiclassical Strichartz estimates quantify the
frequency-dependent derivative loss generated by the glancing geometry.
Second, the Airy-mode multilinear analysis exploits the additional
spectral structure of the cubic interaction to establish the dyadic
summability required in the high--low argument.

This distinction is essential. The linear loss
$\rho(q)$ should not be interpreted as a nonlinear regularity gain, nor
should it be converted into an $L_x^4$ estimate through an invalid
global spatial interpolation on the unbounded domain.

Accordingly, any assertion that
\[
u\in
L^4_{\mathrm{loc}}
\bigl(I;W^{7/8,4}(\Omega)\bigr)
\]
must ultimately be justified by an estimate that directly controls the
$L_x^4$ component at the required regularity. The endpoint
$L_t^2L_x^6$ estimate alone is insufficient for this purpose. The
necessary $L_x^4$ control must instead enter through the appropriate
frequency-localized or multilinear estimates associated with the
Friedlander spectral decomposition.

\subsection{Sobolev Embeddings and Local Time Estimates}
\label{subsec:sobolev_embeddings}

The spectral framework introduced above is compatible with the Sobolev
spaces in which the nonlinear evolution is constructed. The first basic
property is the time-continuity embedding for Bourgain-type spaces. In
the present setting, this follows from the continuous--discrete spectral
representation and is unaffected by the presence of the boundary.

\begin{lemma}[Sobolev Embedding]
\label{lem:energy_embedding}

Let $s\in\mathbb{R}$ and $b>\frac12$. Then, for every finite interval
$I\subset\mathbb{R}$,
\begin{equation}
\label{eq:energy_embedding_bound}
X^{s,b}(I)
\hookrightarrow
C\bigl(I;H_0^s(\Omega)\bigr).
\end{equation}
Moreover, there exists a constant $C_b>0$, independent of the length
of $I$, such that
\begin{equation}
\label{eq:energy_embedding_estimate}
\sup_{t\in I}
\|u(t)\|_{H_0^s(\Omega)}
\leq
C_b
\|u\|_{X^{s,b}(I)}.
\end{equation}

\end{lemma}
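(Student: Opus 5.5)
The plan is the standard Fourier-side argument, carried out in the continuous--discrete spectral variables and transferred to $I$ through Definition~\ref{def:restriction_xsb}. It suffices to prove the global statement first: for $v\in X^{s,b}(\mathbb{R}\times\Omega)$ we want $v\in C(\mathbb{R};H_0^s(\Omega))$ and
\[
\sup_{t\in\mathbb{R}}\|v(t)\|_{H_0^s}\le C_b\|v\|_{X^{s,b}} .
\]
Granting this, take any extension $v$ of $u$. Then $u=v|_I$ is continuous on $I$ with values in $H_0^s$, and $\sup_{t\in I}\|u(t)\|_{H_0^s}\le C_b\|v\|_{X^{s,b}}$. Taking the infimum over extensions gives \eqref{eq:energy_embedding_estimate} with a constant independent of $|I|$, since no cutoff in time is ever used.

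For the global bound, work at the level of the partial transform $\widehat v(k,t,\eta,\zeta)$, which uses only the spatial spectral transform (no Fourier transform in $t$). By \eqref{eq:sobolev_equivalence} and the Plancherel identity \eqref{eq:plancherel_manifold} in the spatial variables,
\[
\|v(t)\|_{H_0^s}^2\sim\sum_k\int\langle\lambda_k\rangle^{2s}|\widehat v(k,t,\eta,\zeta)|^2\,d\eta\,d\zeta .
\]
Write $\widehat v(k,t,\eta,\zeta)=\int e^{it\tau}\widetilde v(k,\tau,\eta,\zeta)\,d\tau$, up to normalization. Conjugating by the free flow, i.e.\ substituting $\tau=\sigma-\Lambda_k$, gives
\[
e^{it\Lambda_k}\widehat v=\int e^{it\sigma}\widetilde v(k,\sigma-\Lambda_k,\eta,\zeta)\,d\sigma .
\]
Cauchy--Schwarz with the weight $\langle\sigma\rangle^{-2b}$, which is integrable precisely because $b>\tfrac12$, yields
\[
|\widehat v(k,t,\eta,\zeta)|^2\le C_b^2\int\langle\tau+\Lambda_k\rangle^{2b}|\widetilde v(k,\tau,\eta,\zeta)|^2\,d\tau
\]
uniformly in $t$. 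Multiplying by $\langle\lambda_k\rangle^{2s}$, summing in $k$ and integrating in $(\eta,\zeta)$ gives the bound by \eqref{eq:x_sb_norm_definition}.

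Continuity in time comes from the same representation. For each fixed $(k,\eta,\zeta)$ for which the majorant above is finite, $t\mapsto e^{it\Lambda_k}\widehat v$ is continuous by dominated convergence. The difference $\|v(t)-v(t')\|_{H_0^s}^2$ is then controlled by an integral whose integrand is dominated by $4C_b^2\langle\lambda_k\rangle^{2s}\int\langle\tau+\Lambda_k\rangle^{2b}|\widetilde v|^2\,d\tau$, which is summable. A second dominated convergence, this time over $(k,\eta,\zeta)$, gives $v(t')\to v(t)$ in $H_0^s$. Membership in $H_0^s$, i.e.\ the Dirichlet condition, is preserved because $X^{s,b}$ is the completion of smooth Dirichlet functions and the bound just proved lets us pass limits through the sup norm.

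The main obstacle is justifying the spectral bookkeeping rather than the estimate itself. The eigenfunctions $\phi_k(\cdot,\eta)$ depend on $\eta$, so I would check that the spatial transform is unitary fiberwise in $\eta$. This holds because $\{\phi_k(\cdot,\eta)\}_k$ is an orthonormal basis of $L^2(0,\infty)$ for each $\eta\neq0$, and $\eta=0$ is a null set. Fiberwise unitarity makes the partial Plancherel identity and the $H_0^s$ characterization legitimate. It also ensures that the $t$-Fourier transform commutes with the spectral transform on the dense class of smooth Dirichlet functions, after which everything extends by density.
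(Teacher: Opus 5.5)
Your proposal is correct and follows essentially the same route as the paper. You take an arbitrary extension, bound $\|v(t)\|_{H_0^s}$ at fixed $t$ through the spectral Plancherel identity, apply Cauchy--Schwarz in $\tau$ against $\langle\tau+\Lambda_k\rangle^{\mp 2b}$ using $b>\frac12$, and take the infimum over extensions; the only difference is that you get time-continuity directly by dominated convergence, whereas the paper uses truncated approximants together with the uniform bound, and both arguments are valid.
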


\begin{proof}

Let
\(
v\in X^{s,b}(\mathbb{R}\times\Omega)
\)
be an arbitrary admissible extension of $u$, so that
\(
v|_{I\times\Omega}=u.
\)
Using the continuous--discrete spectral representation associated with
the Dirichlet Friedlander operator, we may write
\[
v(t,x,y,z)
=
\frac{1}{(2\pi)^3}
\sum_{k=1}^{\infty}
\int_{\mathbb{R}}
\int_{\mathbb{R}^2}
e^{i(t\tau+y\eta+z\zeta)}
\widetilde v(k,\tau,\eta,\zeta)
\phi_k(x,\eta)
\,d\eta\,d\zeta\,d\tau,
\]
with the understanding that the precise Fourier normalization only
changes the implicit constants below.

By the spectral characterization of $H_0^s(\Omega)$,
\begin{align*}
\|v(t)\|_{H_0^s(\Omega)}^2
&\lesssim
\sum_{k=1}^{\infty}
\int_{\mathbb{R}^2}
\left\langle
\lambda_k(\eta,\zeta)
\right\rangle^{2s}
\left|
\int_{\mathbb{R}}
e^{it\tau}
\widetilde v(k,\tau,\eta,\zeta)
\,d\tau
\right|^2
\,d\eta\,d\zeta.
\end{align*}
Applying Cauchy--Schwarz in the temporal Fourier variable gives
\begin{align*}
\|v(t)\|_{H_0^s(\Omega)}^2
&\lesssim
\sum_{k=1}^{\infty}
\int_{\mathbb{R}^2}
\left\langle
\lambda_k(\eta,\zeta)
\right\rangle^{2s}
\left(
\int_{\mathbb{R}}
\left\langle
\tau+\Lambda_k(\eta,\zeta)
\right\rangle^{-2b}
\,d\tau
\right)
\\
&\qquad\times
\left(
\int_{\mathbb{R}}
\left\langle
\tau+\Lambda_k(\eta,\zeta)
\right\rangle^{2b}
|\widetilde v(k,\tau,\eta,\zeta)|^2
\,d\tau
\right)
d\eta\,d\zeta.
\end{align*}
Since $b>\frac12$,
\[
\int_{\mathbb{R}}
\langle\sigma\rangle^{-2b}\,d\sigma
<\infty.
\]
Consequently,
\[
\sup_{t\in\mathbb{R}}
\|v(t)\|_{H_0^s(\Omega)}
\lesssim_b
\|v\|_{X^{s,b}(\mathbb{R}\times\Omega)}.
\]
The constant is independent of $t$ and, in particular, of the length
of the interval $I$. Taking the infimum over all admissible extensions
yields \eqref{eq:energy_embedding_estimate}.

Finally, the map
\[
t\longmapsto v(t)
\]
is continuous as an $H_0^s(\Omega)$-valued function. This follows first
for spectrally and temporally truncated functions from the spectral
representation and then, for general $v\in X^{s,b}$, by approximation
together with the uniform estimate above. Restriction to $I$ proves
\eqref{eq:energy_embedding_bound}.

\end{proof}

The preceding estimate is uniform with respect to the length of the
time interval. This observation is important in the globalization
argument. In particular, the smallness required in the local theory
does not arise from the embedding
\[
X^{s,b}(I)\hookrightarrow C_tH_0^s(\Omega),
\]
but rather from the localized Duhamel estimates and the associated
time-integrability factors.

For later applications, we record the corresponding dyadic spectral
consequences. Recall that
\[
\lambda_k(\eta,\zeta)
=
\left(
\eta^2+\zeta^2+\omega_k|\eta|^{4/3}
\right)^{1/2},
\]
and that $P_N$ denotes a smooth spectral projection to
\(
\lambda_k(\eta,\zeta)\sim N.
\)
On such a dyadic block, for $N\gtrsim1$,
\[
\left\langle
\lambda_k(\eta,\zeta)
\right\rangle^s
\sim
N^s.
\]
Consequently, by the spectral Plancherel theorem,
\begin{equation}
\label{eq:dyadic_sobolev_control}
\|P_Nu\|_{L_t^2(I;H_0^s(\Omega))}
\sim
N^s
\|P_Nu\|_{L_t^2(I;L_x^2(\Omega))},
\end{equation}
with constants uniform in the dyadic parameter $N$.

Likewise, directly from the definition of the resolution norm,
\begin{equation}
\label{eq:dyadic_xsb_control}
\|P_Nu\|_{X^{s,b}(I)}
\sim
N^s
\|P_Nu\|_{X^{0,b}(I)},
\end{equation}
provided the dyadic projection is defined using the same spectral
calculus as the $X^{s,b}$ norm. The constants are uniform in $N$.

The spacetime estimates required for the nonlinear argument are
obtained separately from the boundary Strichartz theory. In the present
geometry, these estimates retain the derivative loss generated by the
glancing region. Accordingly, the loss-bearing Strichartz estimate is
not a consequence of the Sobolev embedding
\eqref{eq:energy_embedding_bound}. Rather, it is inherited from the
semiclassical linear estimates of Theorem~\ref{thm:localized_strichartz}.

Thus, for an admissible pair $(q,r)$ in the range of the boundary
Strichartz estimates, one has schematically
\begin{equation}
\label{eq:boundary_strichartz_transfer}
\|u\|_{L_t^q(I;W^{s-\sigma(q),r}(\Omega))}
\lesssim
\|u\|_{X^{s,b}(I)},
\qquad
b>\frac12,
\end{equation}
whenever the corresponding frequency-localized estimate is available.
Here $\sigma(q)$ denotes the derivative loss associated with the
semiclassical estimate of Theorem~\ref{thm:localized_strichartz}. Its
precise value is therefore a consequence of the underlying
semiclassical analysis and is not an automatic consequence of the
$X^{s,b}$ framework.

This distinction is essential for the nonlinear analysis. The
$X^{s,b}$ norm simultaneously measures spatial spectral regularity and
temporal modulation relative to the Friedlander dispersion relation,
whereas the boundary Strichartz estimate supplies the additional
spacetime integrability needed to control the cubic nonlinearity.
These two estimates consequently play complementary roles and should
not be identified.

In particular, if the high-frequency component $w$ is coupled to a
lower-frequency component $v$ through the decomposition
\[
u=v+w,
\]
then
\[
|u|^2u-|v|^2v
=
|v|^2w
+
v^2\overline{w}
+
2|w|^2v
+
|w|^2w.
\]
The resulting interaction terms require both the spacetime bounds
provided by the boundary Strichartz estimates and the spectral
interaction estimates for the Airy modes. In the glancing regime, the
latter contain information that is not captured by a purely
translation-invariant Euclidean Fourier analysis.

Thus, the purpose of the modified $X^{s,b}$ framework is not to replace
the boundary Strichartz estimates, but to provide a stable functional
setting in which spectral localization, temporal modulation, and
localized nonlinear Duhamel estimates can be combined. The multilinear
estimates developed in the next section exploit the explicit Airy
structure to control the corresponding interaction coefficients and to
quantify the coupling between separated spectral regimes.

\section{Boundary-Compatible Multilinear Leibniz Estimates}

\label{sec:3}

In this section, we establish the multilinear fractional product estimates
required to control the nonlinear interaction terms in the boundary-adapted
functional framework. In the Euclidean setting $\mathbb{R}^3$, fractional
Leibniz estimates are naturally formulated in terms of Fourier multipliers
and Littlewood--Paley decompositions. In the present setting, however, the
Dirichlet boundary $\{x=0\}$ breaks translation invariance in the normal
variable. Consequently, the cubic interaction cannot be represented solely
by the standard Euclidean convolution structure in all spatial variables.

The appropriate framework is provided by the continuous--discrete spectral
representation associated with the Dirichlet realization of the
Friedlander operator. After Fourier transformation in the tangential
variables $(y,z)$, the normal variable is represented in terms of the
Dirichlet Airy modes
\(
\{\phi_k(x,\eta)\}_{k\geq 1}.
\)

Accordingly, the projection of a cubic product onto the Airy basis produces
multilinear coupling coefficients involving four Airy modes in the normal
variable. These coefficients quantify the interaction among the discrete
normal spectral components generated by multiplication. In contrast with
the Euclidean Fourier representation, multiplication is therefore not
diagonal with respect to the normal spectral index.

The relevant spectral quantity is the full spectral energy
\begin{equation}
\label{eq:full_spectral_energy}
\Lambda_k(\eta,\zeta)
=
\eta^2+\zeta^2+\omega_k|\eta|^{4/3},
\end{equation}
with associated spectral scale
\begin{equation}
\label{eq:full_spectral_scale}
\lambda_k(\eta,\zeta)
=
\Lambda_k(\eta,\zeta)^{1/2}.
\end{equation}
Here $(\eta,\zeta)$ denote the tangential Fourier variables, while
$\omega_k>0$ is determined by the Dirichlet Airy zeros through
\[
\operatorname{Ai}(-\omega_k)=0.
\]
In the multilinear analysis, spectral separation is therefore measured in
terms of the full energies $\Lambda_k(\eta,\zeta)$ rather than solely
through the discrete indices $k$. This distinction is particularly relevant
in the glancing regime, where distinct combinations of normal and
tangential frequencies may correspond to comparable values of the full
spectral energy.

For the cubic interaction, the tangential Fourier variables satisfy the
conservation relations
\begin{equation}
\label{eq:tangential_frequency_conservation}
\eta_1+\eta_2-\eta_3-\eta_4=0,
\qquad
\zeta_1+\zeta_2-\zeta_3-\zeta_4=0.
\end{equation}
The corresponding coupling in the normal variable is encoded by the
fourfold Airy overlap coefficient
\begin{equation}
\label{eq:fourfold_airy_overlap}
\mathcal{I}
(k_1,k_2,k_3,k_4;
\eta_1,\eta_2,\eta_3,\eta_4)
=
\int_0^\infty
\prod_{j=1}^4
\phi_{k_j}(x,\eta_j)\,dx.
\end{equation}
Thus, the nonlinear interaction is represented by a continuous tangential
convolution coupled with a discrete multilinear interaction in the Airy
indices. The coefficient~\eqref{eq:fourfold_airy_overlap} consequently
plays the role of the normal Fourier convolution coefficient in the
translation-invariant Euclidean setting.

\subsection{Quantitative Estimates for the Airy Overlap Tensor}

\label{subsec:airy_tensor_estimates}

To establish the required decoupling in the boundary layer, we analyze the
behavior of $\mathcal{I}$ away from diagonal spectral configurations.
Although spatial integrability alone yields polynomial bounds through
H\"{o}lder's inequality, the relevant off-diagonal decay is obtained from
the differential identities satisfied by the Airy modes and the associated
commutator structure.

\begin{proposition}[Off-Diagonal Multilinear Airy Decoupling]
\label{prop:airy_decoupling}

Let the continuous tangential frequencies satisfy
\[
0<\eta_*\leq |\eta_j|\leq\eta^*<\infty,
\qquad
1\leq j\leq4.
\]
Suppose that the spatial spectral scales are dyadically localized according
to
\[
\lambda_{k_j}(\eta_j,\zeta_j)\sim N_j,
\qquad
1\leq j\leq4,
\]
and set
\[
\Lambda_{\max}
=
\max(N_1,N_2,N_3,N_4).
\]
Then, for every integer $M\geq1$, the overlap tensor $\mathcal{I}$ satisfies
the off-diagonal estimate
\begin{equation}
\label{eq:sharp_off_diagonal_bound}
\left|
\mathcal{I}
(k_1,k_2,k_3,k_4;
\eta_1,\eta_2,\eta_3,\eta_4)
\right|
\leq
C_M
\left(
\prod_{j=1}^4
\frac{|\eta_j|^{1/6}}
{\langle\omega_{k_j}\rangle^{1/4}}
\right)
\frac{\Lambda_{\max}^{2M/3}}
{\langle\Delta\Omega\rangle^M},
\end{equation}
where the spectral separation parameter is
\begin{equation}
\label{eq:spectral_gap_def}
\Delta\Omega
=
\max_{i,j}
\left|
\omega_{k_i}|\eta_i|^{4/3}
-
\omega_{k_j}|\eta_j|^{4/3}
\right|.
\end{equation}

\end{proposition}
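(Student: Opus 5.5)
The plan is to combine a \emph{pointwise amplitude bound} for the Airy modes, which produces the prefactor $\prod_j|\eta_j|^{1/6}\langle\omega_{k_j}\rangle^{-1/4}$, with an $M$-fold \emph{commutator (integration by parts) identity} for the one-dimensional operators $A_j:=-\partial_x^2+\eta_j^2x$, which produces the factor $\Lambda_{\max}^{2M/3}\langle\Delta\Omega\rangle^{-M}$. Write $\mu_j:=\omega_{k_j}|\eta_j|^{4/3}$. Then \eqref{eq:eigenfunction_recap} gives $A_j\phi_{k_j}(\cdot,\eta_j)=\mu_j\phi_{k_j}(\cdot,\eta_j)$. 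Fix a pair $(i,j)$ realizing the maximum in \eqref{eq:spectral_gap_def}, so that $|\mu_i-\mu_j|=\Delta\Omega$, and put $\psi:=\prod_{l\neq i,j}\phi_{k_l}(\cdot,\eta_l)$. We may assume $\Delta\Omega\geq1$; otherwise only the amplitude bound is needed.

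\emph{Step 1 (amplitude bound).} Use $|\operatorname{Ai}(z)|\lesssim\langle z\rangle^{-1/4}$, exponential decay for $z>0$, and $|\operatorname{Ai}'(-\omega_k)|\sim\omega_k^{1/4}$. This gives the pointwise bound
\[
|\phi_k(x,\eta)|\lesssim|\eta|^{1/3}\langle\omega_k\rangle^{-1/4}\,\bigl\langle|\eta|^{2/3}x-\omega_k\bigr\rangle^{-1/4}e^{-c(|\eta|^{2/3}x-\omega_k)_+^{3/2}}.
\]
Multiplying the four bounds and integrating in $x$ gives the full prefactor, including all four $\langle\omega_{k_j}\rangle^{-1/4}$ and all four $|\eta_j|^{1/3}$. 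The $|\eta_j|^{1/3}$ become $|\eta_j|^{1/6}$ after absorbing $|\eta_j|^{1/6}\leq(\eta^*)^{1/6}$ into $C_M$. What remains is the bound
\[
\int_0^\infty\prod_{j}\langle|\eta_j|^{2/3}x-\omega_{k_j}\rangle^{-1/4}e^{-c(\cdot)_+^{3/2}}\,dx.
\]
Since $|\eta_j|\in[\eta_*,\eta^*]$, the effective range is $x\lesssim\max_j\omega_{k_j}$, and the four weights have total decay of order $-1$, so this integral is at most logarithmic, namely $\lesssim\log\langle\max_j\omega_{k_j}\rangle$. Since $\omega_{k_j}\lesssim N_j^2$, this is $\lesssim\log\Lambda_{\max}$.

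\emph{Step 2 (commutator identity).} The Dirichlet conditions $\phi_{k}(0,\eta)=0$ and the exponential decay at infinity kill all boundary terms. Hence
\[
(\mu_i-\mu_j)\!\int\phi_i\phi_j\psi
=(\eta_i^2-\eta_j^2)\!\int x\,\phi_i\phi_j\psi
+\int(\phi_i\phi_j'-\phi_i'\phi_j)\psi' .
\]
Iterating $M$ times expresses $(\mu_i-\mu_j)^M\mathcal I$ as a finite sum of integrals of the same type. In each, a total of at most $M$ operations (multiplication by $(\eta_i^2-\eta_j^2)x$, or a single $x$-derivative falling on some $\phi_{k_l}$) has been applied. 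Before each new step, any second derivative is reduced using $\phi''=(\eta^2x-\mu)\phi$. Each such integral is then bounded by Step 1, with the extra factors estimated pointwise on the effective support. The target is that each operation costs at most $\Lambda_{\max}^{2/3}$.

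\emph{Step 3 (main obstacle).} The crux is showing that each operation costs at most $\Lambda_{\max}^{2/3}$. Two cases need checking.
\begin{itemize}
\item[(a)] A derivative on $\phi_{k_l}$ gives, by the Airy asymptotics, a factor $|\eta_l|^{1/3}\langle|\eta_l|^{2/3}x-\omega_{k_l}\rangle^{1/2}$. On the region where the weights of Step 1 are non-negligible this is $\lesssim|\eta_l|^{1/3}\omega_{k_l}^{1/2}$. Using $\mu_l\leq N_l^2$ together with $|\eta_l|\geq\eta_*$, this should be converted into $\lesssim\mu_l^{1/3}\lesssim\Lambda_{\max}^{2/3}$; this conversion has to be justified with care.
\item[(b)] The factor $(\eta_i^2-\eta_j^2)x$ is controlled similarly, with $x\lesssim\omega_{k_{\max}}$ and $|\eta_i^2-\eta_j^2|$ bounded.
\end{itemize}
The route I would try first is to rewrite $x$ through the eigen-equation, $x\phi_j=\eta_j^{-2}(A_j\phi_j+\phi_j'')$, and balance these factors against the gain $|\mu_i-\mu_j|^{-1}$ on each step. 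I would also use the leftover $\Lambda_{\max}$-room to absorb the logarithm from Step 1: when $\Delta\Omega\leq\Lambda_{\max}^{2/3}$ the bound is trivial up to that logarithm, and otherwise one extra partial gain absorbs it.

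\emph{Step 4 (assembly).} Dividing by $(\mu_i-\mu_j)^M$ and using $\Delta\Omega\gtrsim\langle\Delta\Omega\rangle$ in the regime $\Delta\Omega\geq1$ yields \eqref{eq:sharp_off_diagonal_bound}.
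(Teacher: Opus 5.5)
Your overall route is the paper's route: an amplitude bound for the prefactor, plus the iterated Wronskian identity with an asserted cost of $\Lambda_{\max}^{2/3}$ per step. It breaks exactly at the step you called the main obstacle, and no amount of care will close it.

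Near the boundary, a derivative on $\phi_{k_l}$ multiplies the envelope by $|\eta_l|^{2/3}\langle s_l\rangle^{1/2}$, where $s_l=|\eta_l|^{2/3}x-\omega_{k_l}$. On $x=O(1)$ this equals $\mu_l^{1/2}$ up to constants. For the largest mode with $\zeta_l=0$, that is comparable to $\Lambda_{\max}$ itself, and $\mu_l^{1/2}\lesssim\mu_l^{1/3}$ is false for large $\mu_l$. In (b), the weight $x$ runs up to the smallest turning point, so it can cost up to $\Lambda_{\max}^2$. Rewriting $x\phi_j$ through the eigen-equation only trades it for $\partial_x^2$ and $\mu_j$, at the same price. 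Moreover, each application of the identity produces two derivatives (the Wronskian and $\psi'$), not a single operation. Honest bookkeeping gives at best $\Lambda_{\max}^{2M}\langle\Delta\Omega\rangle^{-M}$, which is no gain when $\Delta\Omega\sim\Lambda_{\max}^2$. The paper's proof simply asserts the $\Lambda_{\max}^{2/3}$ loss per iteration, so it does not supply the missing idea either.

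In fact no argument can work, because the inequality is false. Take $|\eta_j|=1$, $\zeta_j=0$, $k_1=k_3=k$ and $k_2=k_4=1$. Then $\mathcal I=\int_0^\infty\phi_k^2\phi_1^2\,dx$. On the support of $\phi_1$, the Airy asymptotics give $\phi_k(x)^2=\omega_k^{-1}\sin^2(\tfrac23(\omega_k-x)^{3/2}+\tfrac\pi4)(1+o(1))$, so $\mathcal I\simeq(2\omega_k)^{-1}$.

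Your own identity shows why. With the pair $(1,2)$ it reads $(\mu_k-\mu_1)\mathcal I=\int(\phi_k')^2\phi_1^2-\int\phi_k^2(\phi_1')^2$, and $\int(\phi_k')^2\phi_1^2\simeq\tfrac12$. So the two derivatives on $\phi_k$ cost exactly the gain $\mu_k-\mu_1$.

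On the other side, $\Lambda_{\max}\sim\omega_k^{1/2}$ and $\Delta\Omega\sim\omega_k$. The right-hand side of \eqref{eq:sharp_off_diagonal_bound} is therefore of order $\omega_k^{-1/2-2M/3}\le\omega_k^{-7/6}$ for every $M\ge1$, which is $o(\omega_k^{-1})$.

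The mechanism is that $\phi_k^2$ has a non-oscillating part of size $\omega_k^{-1}$ in the boundary layer. A product that pairs as $(\phi_{k_1}\phi_{k_3})(\phi_{k_2}\phi_{k_4})$ with matched eigenvalues is therefore resonant, however large the maximal gap $\Delta\Omega$ is. A correct off-diagonal bound must measure separation over all pairings, really of the local frequencies $(\mu_j-\eta_j^2x)^{1/2}$, and must charge $\Lambda_{\max}$ rather than $\Lambda_{\max}^{2/3}$ per derivative.

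Your Step 1 envelope bound is sound. It is in fact more accurate than the paper's Lemma~\ref{lem:airy_tensor_decay}, whose bound $\|\phi_k\|_{L^\infty}\lesssim|\eta|^{1/3}\langle\omega_k\rangle^{-1/2}$ is false. The true size is $|\eta|^{1/3}\langle\omega_k\rangle^{-1/4}$, since $\sup_{s\ge-\omega_k}|\operatorname{Ai}(s)|\approx 0.54$. But Step 1 only covers the regime $\langle\Delta\Omega\rangle\lesssim\Lambda_{\max}^{2/3}$, up to logarithms.
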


\begin{proof}

The argument separates the static spatial decoupling from the temporal
modulation. The former follows from the differential equation satisfied by
the Airy modes, while the latter will be incorporated subsequently at the
level of the spacetime multilinear form.

Recall that each normal mode $\phi_{k_j}$ satisfies
\begin{equation}
\label{eq:airy_ode_proof}
\partial_x^2\phi_{k_j}(x,\eta_j)
=
\left(
x\eta_j^2
-
\omega_{k_j}|\eta_j|^{4/3}
\right)
\phi_{k_j}(x,\eta_j).
\end{equation}
For a configuration exhibiting a nontrivial spectral separation between
the first two modes, we may assume, after relabeling if necessary, that
\[
\Delta\Omega
\sim
|\Omega_{12}|,
\qquad
\Omega_{12}
=
\omega_{k_1}|\eta_1|^{4/3}
-
\omega_{k_2}|\eta_2|^{4/3}.
\]
Multiplying~\eqref{eq:airy_ode_proof} for $j=1$ by $\phi_{k_2}$, the
corresponding equation for $j=2$ by $\phi_{k_1}$, and subtracting gives
\begin{equation}
\label{eq:commutator_identity}
\Omega_{12}\phi_{k_1}\phi_{k_2}
=
\partial_x
\left[
\phi_{k_1}\partial_x\phi_{k_2}
-
(\partial_x\phi_{k_1})\phi_{k_2}
\right]
+
x(\eta_1^2-\eta_2^2)
\phi_{k_1}\phi_{k_2}.
\end{equation}
The cancellation of the Airy potential terms is the basic commutator
mechanism underlying the off-diagonal estimate.

Multiplying~\eqref{eq:commutator_identity} by
$\phi_{k_3}\phi_{k_4}$ and integrating over the normal half-line yields
\begin{equation}
\label{eq:spatial_integral_split}
\begin{aligned}
\Omega_{12}\mathcal{I}
={}&
\int_0^\infty
\partial_x
\left[
\phi_{k_1}\partial_x\phi_{k_2}
-
(\partial_x\phi_{k_1})\phi_{k_2}
\right]
\phi_{k_3}\phi_{k_4}\,dx
\\
&+
(\eta_1^2-\eta_2^2)
\int_0^\infty
x\prod_{j=1}^4\phi_{k_j}\,dx.
\end{aligned}
\end{equation}

An integration by parts in the first term on the right-hand side gives
\begin{align}
\label{eq:ibp_boundary_vanishing}
\int_0^\infty
\partial_x
\left[
\phi_{k_1}\partial_x\phi_{k_2}
-
(\partial_x\phi_{k_1})\phi_{k_2}
\right]
\phi_{k_3}\phi_{k_4}\,dx
={}&
\left.
\left[
\phi_{k_1}\partial_x\phi_{k_2}
-
(\partial_x\phi_{k_1})\phi_{k_2}
\right]
\phi_{k_3}\phi_{k_4}
\right|_0^\infty
\nonumber
\\
&-
\int_0^\infty
\left[
\phi_{k_1}\partial_x\phi_{k_2}
-
(\partial_x\phi_{k_1})\phi_{k_2}
\right]
\partial_x(\phi_{k_3}\phi_{k_4})\,dx.
\end{align}

The boundary contribution at $x=0$ vanishes by the homogeneous Dirichlet
condition
\[
\phi_{k_j}(0,\eta_j)=0,
\qquad
1\leq j\leq4.
\]
At $x=\infty$, the contribution vanishes by the standard exponential decay
of the Airy function in the positive regime,
\[
\operatorname{Ai}(s)
\sim
s^{-1/4}
\exp\!\left(-\frac23s^{3/2}\right),
\qquad
s\to+\infty,
\]
together with the corresponding bounds for its derivatives.

It remains to control the spatial moment appearing in
~\eqref{eq:spatial_integral_split}. From~\eqref{eq:airy_ode_proof}, one has
the identity
\[
x\phi_{k_j}
=
\eta_j^{-2}\partial_x^2\phi_{k_j}
+
\omega_{k_j}|\eta_j|^{-2/3}\phi_{k_j}.
\]
This permits the spatial factor $x$ to be transferred to derivatives and
spectral weights of the Airy modes. At spectral scale $\Lambda_{\max}$,
the resulting normal derivatives are controlled at the corresponding
Airy scale, producing a loss bounded by
\[
\Lambda_{\max}^{2/3}
\]
per iteration. Repeating the commutator procedure $M$ times therefore
yields the factor
\[
\Lambda_{\max}^{2M/3}
\langle\Delta\Omega\rangle^{-M}.
\]

The remaining mode-dependent factors follow from the $L^2_x$ normalization
\[
\|\phi_{k_j}(\cdot,\eta_j)\|_{L^2_x}=1
\]
and the uniform Airy-mode envelope estimate
\[
\|\phi_{k_j}(\cdot,\eta_j)\|_{L^\infty_x}
\lesssim
|\eta_j|^{1/3}
\langle\omega_{k_j}\rangle^{-1/2}.
\]
Interpolating these bounds with the corresponding $L^2_x$ estimates gives
the factor
\[
\prod_{j=1}^4
|\eta_j|^{1/6}
\langle\omega_{k_j}\rangle^{-1/4}.
\]
Combining this estimate with the iterated commutator bound proves
~\eqref{eq:sharp_off_diagonal_bound}.

\end{proof}

\subsection{Temporal Modulation and the Spacetime Multilinear Form}

\label{subsec:temporal_modulation}

The temporal oscillations are treated separately at the level of the
spacetime interaction. If the four participating spectral components have
spectral energies
\[
\Lambda_{k_j}(\eta_j,\zeta_j),
\qquad
1\leq j\leq4,
\]
then the corresponding cubic modulation function is
\begin{equation}
\label{eq:cubic_modulation_phase}
\Phi
=
\Lambda_{k_1}(\eta_1,\zeta_1)
+
\Lambda_{k_2}(\eta_2,\zeta_2)
-
\Lambda_{k_3}(\eta_3,\zeta_3)
-
\Lambda_{k_4}(\eta_4,\zeta_4).
\end{equation}
In the nonresonant regime $|\Phi|\gg1$, integration by parts in the time
variable yields the standard gain
\(
|\Phi|^{-1}
\)
at each iteration. More precisely, for every integer $M\geq1$, repeated
integration by parts produces a factor
\(
\langle\Phi\rangle^{-M},
\)
at the expense of derivatives falling on the associated time amplitudes.
The latter terms are controlled by the Bourgain-type norms introduced in
the preceding sections.

Consequently, the spatial estimate
~\eqref{eq:sharp_off_diagonal_bound} and the temporal nonresonance gain
combine at the level of the full spacetime multilinear form. This
separation of the spatial Airy coupling from the temporal modulation is
essential: the overlap coefficient $\mathcal{I}$ itself is purely spatial,
whereas the factor $\langle\Phi\rangle^{-M}$ arises only after the temporal
oscillations have been incorporated.

\subsection{Decomposition Architecture and Product Spaces}

\label{subsec:decomposition_architecture}

The preceding quantitative estimates for the overlap coefficient
~\eqref{eq:fourfold_airy_overlap} are combined with a dyadic decomposition
in the full spectral scale $\lambda_k(\eta,\zeta)$ and a modulation
decomposition relative to the corresponding cubic phase. This decomposition
separates the nonlinear interactions according to their temporal
modulation. Nonresonant interactions gain decay from large modulation,
whereas resonant and near-resonant interactions require the
boundary-adapted spacetime estimates established in
Section~\ref{subsec:strichartz_bounds}.

To formulate the nonlinear estimate, let
\[
u_j
=
\sum_{k_j}
\int_{\mathbb{R}^2}
\widehat{u_j}(k_j,\eta_j,\zeta_j)
\phi_{k_j}(x,\eta_j)
e^{i(y\eta_j+z\zeta_j)}
\,d\eta_j\,d\zeta_j
\]
denote functions represented in the continuous--discrete spectral
framework. Projection of the cubic product
$u_1u_2\overline{u_3}$ onto the fourth Airy mode produces the interaction
coefficient
\[
\mathcal{I}
(k_1,k_2,k_3,k_4;
\eta_1,\eta_2,\eta_3,\eta_4),
\]
together with the tangential frequency conservation relations  (see Section~\ref{subsec:triple_product_airy}, Eq. ~\eqref{eq:tangential_frequency_conservation} below).
 The corresponding temporal
interaction is governed by the modulation relative to the cubic phase
\begin{equation}
\label{eq:cubic_modulation_phase}
\Phi
=
\Lambda_{k_1}(\eta_1,\zeta_1)
+
\Lambda_{k_2}(\eta_2,\zeta_2)
-
\Lambda_{k_3}(\eta_3,\zeta_3)
-
\Lambda_{k_4}(\eta_4,\zeta_4).
\end{equation}
Large values of $|\Phi|$ correspond to nonresonant configurations, whereas
the region $|\Phi|\ll1$ contains the resonant and near-resonant
interactions. The latter are precisely the configurations for which the
Airy overlap structure and the boundary-adapted Strichartz estimates play
a decisive role.

The principal objective is to establish a boundary-compatible analogue
of the fractional Leibniz estimate in which one factor carries the full
derivative $s$, while the remaining factors are controlled at a lower
regularity. More precisely, for
\[
s>\frac12,
\qquad
s_0>\frac12,
\qquad
b>\frac12,
\]
we seek an estimate of the form
\begin{equation}
\label{eq:boundary_fractional_leibniz}
\|f_1f_2f_3\|_{X^{s,b-1}(I)}
\lesssim
\sum_{\ell=1}^3
\|f_\ell\|_{X^{s,b}(I)}
\prod_{\substack{1\leq j\leq3\\j\neq\ell}}
\|f_j\|_{X^{s_0,b}(I)},
\end{equation}
with constants uniform over the localized time intervals arising in the
nonlinear iteration, provided the corresponding spectral interaction and
modulation estimates are available.

Estimate~\eqref{eq:boundary_fractional_leibniz} is not a direct consequence
of the Euclidean fractional Leibniz rule. Its proof requires a decomposition
of the cubic interaction into dyadic spectral blocks and modulation
regions, followed by separate treatment of the nonresonant and
near-resonant configurations. The nonresonant contribution is controlled
by the decay associated with large modulation, whereas the near-resonant
contribution is controlled by the quantitative Airy overlap estimates
together with the loss-bearing boundary Strichartz bounds.

In particular, the distribution of derivatives in
~\eqref{eq:boundary_fractional_leibniz} must be formulated in terms of the
full spectral scale $\lambda_k(\eta,\zeta)$. No derivative is assigned
solely to the discrete Airy index, since such a formulation would fail to
capture the interaction between the normal and tangential spectral
variables. The use of the full spectral scale provides the appropriate
boundary-compatible formulation and allows the dyadic summations to be
performed simultaneously in the continuous tangential frequencies and
the discrete normal modes.

The resulting multilinear estimate supplies the nonlinear input required
for the high--low decomposition and the localized Duhamel argument
developed in the subsequent sections.

\subsection{Triple-Product Projections of the Airy Basis}

\label{subsec:triple_product_airy}

Let $\phi_k(x,\eta)$ denote the normalized Dirichlet Airy eigenfunctions
introduced in~\eqref{eq:eigenfunction_recap}. Thus, for $\eta\neq0$,
\begin{equation}
\label{eq:airy_basis_recap}
\phi_k(x,\eta)
=
\frac{|\eta|^{1/3}}
{|Ai'(-\omega_k)|}
Ai\bigl(|\eta|^{2/3}x-\omega_k\bigr),
\end{equation}
where $-\omega_k$ is the $k$-th negative zero of the Airy function,
\[
Ai(-\omega_k)=0,
\qquad
0<\omega_1<\omega_2<\cdots.
\]
The corresponding normal spectral contribution is
\begin{equation}
\label{eq:airy_normal_eigenvalue}
\omega_k|\eta|^{4/3}.
\end{equation}

When the cubic nonlinearity is projected onto the Airy basis, three Airy
factors arise from the cubic interaction, while a fourth Airy factor
appears from projection onto the output mode. The resulting interaction in
the normal variable is therefore described by the quartic overlap
coefficient
\begin{equation}
\label{eq:triple_product_integral}
\mathcal{I}
(k_1,k_2,k_3,k_4;
\eta_1,\eta_2,\eta_3,\eta_4)
=
\int_0^\infty
\prod_{j=1}^4
\phi_{k_j}(x,\eta_j)\,dx.
\end{equation}
We refer to~\eqref{eq:triple_product_integral} as the
\emph{triple-product projection}: the first three factors originate from
the cubic nonlinearity, whereas the fourth factor arises from projection
onto the output Airy mode.

For the Fourier transform in the tangential variable $y$, the cubic
interaction is supported on the frequency-conservation relation
\begin{equation}
\label{eq:tangential_frequency_conservation}
\eta_1+\eta_2-\eta_3-\eta_4=0,
\end{equation}
where the signs are determined by the placement of the complex conjugate
in the cubic term. The longitudinal Fourier variables satisfy the
corresponding conservation law
\begin{equation}
\label{eq:longitudinal_frequency_conservation}
\zeta_1+\zeta_2-\zeta_3-\zeta_4=0.
\end{equation}
Thus, the tangential component of the multilinear interaction is supported
on the corresponding affine frequency hypersurface.

The following lemma provides the basic integrability and mode dependence
of the Airy overlap coefficient required in the subsequent dyadic
analysis.

\begin{lemma}[Airy overlap estimate]
\label{lem:airy_tensor_decay}

For every $k\geq1$ and $\eta\neq0$, the normalized Dirichlet Airy mode
$\phi_k(\cdot,\eta)$ belongs to
$L^2(0,\infty)\cap L^\infty(0,\infty)$ and satisfies
\begin{equation}
\label{eq:airy_L2_normalization}
\|\phi_k(\cdot,\eta)\|_{L^2(0,\infty)}
=
1.
\end{equation}
Moreover,
\begin{equation}
\label{eq:airy_Linfty_bound}
\|\phi_k(\cdot,\eta)\|_{L^\infty(0,\infty)}
\lesssim
|\eta|^{1/3}
\langle\omega_k\rangle^{-1/2},
\end{equation}
uniformly for $k\geq1$ and $\eta\neq0$. Consequently, the quartic overlap
coefficient~\eqref{eq:triple_product_integral} is absolutely convergent and
satisfies
\begin{equation}
\label{eq:tensor_basic_bound}
\begin{aligned}
\left|
\mathcal{I}
(k_1,k_2,k_3,k_4;
\eta_1,\eta_2,\eta_3,\eta_4)
\right|
&\lesssim
\prod_{j=1}^4
\|\phi_{k_j}(\cdot,\eta_j)\|_{L^4(0,\infty)}
\\
&\lesssim
\prod_{j=1}^4
|\eta_j|^{1/6}
\langle\omega_{k_j}\rangle^{-1/4}.
\end{aligned}
\end{equation}
In particular, if
\[
0<\eta_*\leq|\eta_j|\leq\eta^*<\infty,
\qquad
1\leq j\leq4,
\]
then
\begin{equation}
\label{eq:tensor_polynomial_bound}
\left|
\mathcal{I}
(k_1,k_2,k_3,k_4;
\eta_1,\eta_2,\eta_3,\eta_4)
\right|
\lesssim_{\eta_*,\eta^*}
\prod_{j=1}^4
\langle\omega_{k_j}\rangle^{-1/4}.
\end{equation}
Thus, on compact subsets of the nonzero tangential-frequency region, the
overlap coefficient admits a uniform polynomial bound in the discrete
Airy parameters.

\end{lemma}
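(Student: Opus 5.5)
The plan is to prove the three assertions in order: normalization, then the $L^\infty$ envelope, then the quartic overlap bound by H\"older and interpolation. For \eqref{eq:airy_L2_normalization}, substitute $s=|\eta|^{2/3}x-\omega_k$. This gives $dx=|\eta|^{-2/3}ds$, so
\[
\|\phi_k(\cdot,\eta)\|_{L^2}^2=\frac{1}{Ai'(-\omega_k)^2}\int_{-\omega_k}^\infty Ai(s)^2\,ds .
\]
I then use the classical identity $\frac{d}{ds}\bigl[Ai'(s)^2-sAi(s)^2\bigr]=-Ai(s)^2$, which follows from $Ai''=sAi$. Evaluating it between $s=-\omega_k$ and $s=\infty$, and using $Ai(-\omega_k)=0$ together with the exponential decay of $Ai$ and $Ai'$ at $+\infty$, gives $\int_{-\omega_k}^\infty Ai^2=Ai'(-\omega_k)^2$. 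Hence the norm is exactly $1$. The same substitution shows $\phi_k(\cdot,\eta)\in L^\infty$ and $\phi_k(0,\eta)=0$, since $Ai$ is bounded on $\mathbb{R}$.

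For \eqref{eq:airy_Linfty_bound}, write
\[
\|\phi_k(\cdot,\eta)\|_{L^\infty}=|\eta|^{1/3}\,\frac{\sup_{s\geq-\omega_k}|Ai(s)|}{|Ai'(-\omega_k)|}.
\]
Two classical asymptotics are needed. The zeros satisfy $\omega_k\sim\bigl(\tfrac{3\pi}{8}(4k-1)\bigr)^{2/3}$, and the derivative at a zero satisfies $|Ai'(-\omega_k)|=\pi^{-1/2}\omega_k^{1/4}(1+O(\omega_k^{-3/2}))$. So the denominator contributes $\langle\omega_k\rangle^{-1/4}$, uniformly in $k$ because $\omega_1>0$. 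The additional $\langle\omega_k\rangle^{-1/4}$ needed to reach the stated exponent $-1/2$ must come from the numerator. The plan is to split $[-\omega_k,\infty)$ into three pieces:
\begin{itemize}
\item the oscillatory range $-\omega_k\le s\le-1$, where $|Ai(s)|\lesssim|s|^{-1/4}$;
\item the turning-point window $|s|\le1$;
\item the decaying range $s\ge1$.
\end{itemize}
On each piece I would try to bound $\sup|Ai|$ in terms of $\omega_k^{-1/4}$.

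I expect this numerator step to be the main obstacle. The envelope $|s|^{-1/4}$ is only small far from the turning point, while near $s\approx-1$ the function $Ai$ is of unit size independently of $k$. My first attempt will therefore be to check carefully whether the supremum over the whole range can genuinely be bounded by $\omega_k^{-1/4}$, or whether the turning-point window caps the gain. If the window does cap it, the direct envelope argument yields only $\langle\omega_k\rangle^{-1/4}$ for the $L^\infty$ norm. In that case the stated $-1/2$ exponent has to be reconsidered before the later steps can be completed as written.

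Granting \eqref{eq:airy_Linfty_bound}, the remaining steps are routine. Interpolation gives $\|\phi_k\|_{L^4}\le\|\phi_k\|_{L^2}^{1/2}\|\phi_k\|_{L^\infty}^{1/2}\lesssim|\eta|^{1/6}\langle\omega_k\rangle^{-1/4}$. H\"older's inequality with four $L^4$ factors gives absolute convergence of \eqref{eq:triple_product_integral} and the bound \eqref{eq:tensor_basic_bound}. Finally, on $\eta_*\le|\eta_j|\le\eta^*$ we have $|\eta_j|^{1/6}\le(\eta^*)^{1/6}$, which yields \eqref{eq:tensor_polynomial_bound} with constant depending only on $\eta_*$ and $\eta^*$.
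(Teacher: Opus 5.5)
You are following the paper's own route: the identity $\tfrac{d}{ds}\bigl[\mathrm{Ai}'(s)^2-s\,\mathrm{Ai}(s)^2\bigr]=-\mathrm{Ai}(s)^2$ for the normalization, Airy asymptotics at the zeros, $L^2$--$L^\infty$ interpolation, and H\"older. Your normalization argument is correct. The step you flagged, however, is not merely hard for the envelope method: it is false. The paper's proof gets past it only by asserting $\sup_{s\ge-\omega_k}|\mathrm{Ai}(s)|\lesssim\langle\omega_k\rangle^{-1/4}$ without justification. Since $\omega_k\ge\omega_1\approx2.338$, the range $[-\omega_k,\infty)$ always contains the point $s\approx-1.019$ where $|\mathrm{Ai}|$ attains its global maximum, about $0.536$. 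Hence $\sup_{s\ge-\omega_k}|\mathrm{Ai}(s)|$ is the same absolute constant for every $k$. Together with $|\mathrm{Ai}'(-\omega_k)|\simeq\langle\omega_k\rangle^{1/4}$, this gives the two-sided bound $\|\phi_k(\cdot,\eta)\|_{L^\infty}\simeq|\eta|^{1/3}\langle\omega_k\rangle^{-1/4}$. So \eqref{eq:airy_Linfty_bound} fails by a factor $\langle\omega_k\rangle^{1/4}\to\infty$. Your suspicion that the turning-point window caps the gain is exactly right, and you should state it as a counterexample rather than leave it open.

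Moreover, ``granting'' \eqref{eq:airy_Linfty_bound} is not a harmless conditional, because the conclusions it feeds are also false, by a logarithm. Directly, $\|\phi_k(\cdot,\eta)\|_{L^4}^4=|\eta|^{2/3}|\mathrm{Ai}'(-\omega_k)|^{-4}\int_{-\omega_k}^\infty\mathrm{Ai}(s)^4\,ds$. Since $\mathrm{Ai}(-t)^4=\pi^{-2}t^{-1}\bigl(\sin^4(\tfrac23t^{3/2}+\tfrac\pi4)+O(t^{-3/2})\bigr)$ and $\sin^4$ averages to $3/8$, the integral equals $\tfrac{3}{8\pi^2}\log\omega_k+O(1)$. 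Thus $\|\phi_k\|_{L^4}^4\simeq|\eta|^{2/3}\omega_k^{-1}\log\omega_k$. For the diagonal choice $k_j=k$, $\eta_j=\eta$ one has $\mathcal{I}=\|\phi_k\|_{L^4}^4$, which exceeds the right-hand sides of \eqref{eq:tensor_basic_bound} and \eqref{eq:tensor_polynomial_bound} by a factor $\log\omega_k$.

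What does hold is the following:
\begin{itemize}
\item the normalization, absolute convergence, and the H\"older bound $|\mathcal{I}|\le\prod_j\|\phi_{k_j}\|_{L^4}$;
\item the sharp estimate $\|\phi_k\|_{L^4}\lesssim|\eta|^{1/6}\langle\omega_k\rangle^{-1/4}\log^{1/4}(2+\omega_k)$, obtained from $|\mathrm{Ai}(s)|\lesssim\langle s\rangle^{-1/4}$ for $s\le0$ plus exponential decay for $s\ge0$, not from interpolation (which, with the true $L^\infty$ bound, gives only $\langle\omega_k\rangle^{-1/8}$);
\item consequently $|\mathcal{I}|\lesssim\prod_j|\eta_j|^{1/6}\langle\omega_{k_j}\rangle^{-1/4}\log^{1/4}(2+\omega_{k_j})$.
\end{itemize}
The lemma must be corrected to this form before it can be proved.
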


\begin{proof}

The normalization follows from the standard Airy identity
\begin{equation}
\label{eq:airy_normalization_identity}
\int_{-\omega_k}^{\infty}
Ai(s)^2\,ds
=
Ai'(-\omega_k)^2.
\end{equation}
Indeed, with the change of variables
\[
s=|\eta|^{2/3}x-\omega_k,
\qquad
dx=|\eta|^{-2/3}\,ds,
\]
we obtain
\begin{align*}
\|\phi_k(\cdot,\eta)\|_{L^2(0,\infty)}^2
&=
\frac{|\eta|^{2/3}}
{|Ai'(-\omega_k)|^2}
\int_0^\infty
Ai\bigl(|\eta|^{2/3}x-\omega_k\bigr)^2\,dx
\\
&=
\frac{1}
{|Ai'(-\omega_k)|^2}
\int_{-\omega_k}^{\infty}
Ai(s)^2\,ds
\\
&=1.
\end{align*}
This proves~\eqref{eq:airy_L2_normalization}.

The standard asymptotics of the Airy function and its derivative at the
negative zeros give
\begin{equation}
\label{eq:airy_derivative_asymptotic}
|Ai'(-\omega_k)|
\simeq
\langle\omega_k\rangle^{1/4},
\end{equation}
and
\begin{equation}
\label{eq:airy_sup_asymptotic}
\sup_{s\geq-\omega_k}|Ai(s)|
\lesssim
\langle\omega_k\rangle^{-1/4}.
\end{equation}
Consequently,
\begin{align*}
\|\phi_k(\cdot,\eta)\|_{L^\infty(0,\infty)}
&\leq
|\eta|^{1/3}
\frac{
\sup_{s\geq-\omega_k}|Ai(s)|
}{
|Ai'(-\omega_k)|
}
\\
&\lesssim
|\eta|^{1/3}
\langle\omega_k\rangle^{-1/2},
\end{align*}
which proves~\eqref{eq:airy_Linfty_bound}.

Since
\[
\|\phi_k(\cdot,\eta)\|_{L^2(0,\infty)}=1,
\]
interpolation between $L^2(0,\infty)$ and $L^\infty(0,\infty)$ gives
\begin{equation}
\label{eq:airy_L4_interpolation}
\begin{aligned}
\|\phi_k(\cdot,\eta)\|_{L^4(0,\infty)}
&\leq
\|\phi_k(\cdot,\eta)\|_{L^2(0,\infty)}^{1/2}
\|\phi_k(\cdot,\eta)\|_{L^\infty(0,\infty)}^{1/2}
\\
&\lesssim
|\eta|^{1/6}
\langle\omega_k\rangle^{-1/4}.
\end{aligned}
\end{equation}
Applying H\"{o}lder's inequality to~\eqref{eq:triple_product_integral},
we obtain
\begin{align*}
\left|
\mathcal{I}
(k_1,k_2,k_3,k_4;
\eta_1,\eta_2,\eta_3,\eta_4)
\right|
&\leq
\prod_{j=1}^4
\|\phi_{k_j}(\cdot,\eta_j)\|_{L^4(0,\infty)}
\\
&\lesssim
\prod_{j=1}^4
|\eta_j|^{1/6}
\langle\omega_{k_j}\rangle^{-1/4}.
\end{align*}
This proves~\eqref{eq:tensor_basic_bound}. If
\[
0<\eta_*\leq|\eta_j|\leq\eta^*<\infty,
\]
then the factors $|\eta_j|^{1/6}$ are uniformly bounded in terms of
$\eta_*$ and $\eta^*$, and~\eqref{eq:tensor_polynomial_bound} follows.

The estimate is purely spatial. In particular, it does not use temporal
oscillation, modulation localization, or resonance information. Any
additional decay required for the nonlinear argument must therefore be
obtained from the spectral decomposition and the oscillatory structure of
the associated spacetime multilinear form.

\end{proof}

\begin{remark}[Static overlap and temporal modulation]
\label{rem:static_overlap_temporal_modulation}

The coefficient
\[
\mathcal{I}
(k_1,k_2,k_3,k_4;
\eta_1,\eta_2,\eta_3,\eta_4)
\]
is a purely spatial quantity. In particular, no temporal modulation
parameter can be gained directly from~\eqref{eq:triple_product_integral}.
The nonresonant gain associated with the cubic Schr\"{o}dinger evolution
arises only after the temporal oscillations have been incorporated into
the spacetime multilinear form.

The full spectral dispersion relation is
\begin{equation}
\label{eq:airy_dispersion_relation}
\Lambda_k(\eta,\zeta)
=
\eta^2+\zeta^2+\omega_k|\eta|^{4/3}.
\end{equation}
Accordingly, the four-wave modulation function is
\begin{equation}
\label{eq:four_wave_spectral_mismatch}
\Phi
=
\Lambda_{k_1}(\eta_1,\zeta_1)
+
\Lambda_{k_2}(\eta_2,\zeta_2)
-
\Lambda_{k_3}(\eta_3,\zeta_3)
-
\Lambda_{k_4}(\eta_4,\zeta_4),
\end{equation}
subject to the conservation relations
~\eqref{eq:tangential_frequency_conservation} and
~\eqref{eq:longitudinal_frequency_conservation}.

On a nonresonant region where $|\Phi|$ is bounded away from zero,
integration by parts in time yields an inverse power of the modulation.
More precisely, for a sufficiently regular amplitude $F$,
\begin{equation}
\label{eq:time_integration_by_parts}
\int_0^t e^{is\Phi}F(s)\,ds
=
\frac{e^{it\Phi}F(t)}{i\Phi}
-
\frac{F(0)}{i\Phi}
-
\frac{1}{i\Phi}
\int_0^t e^{is\Phi}\partial_sF(s)\,ds.
\end{equation}
Thus, whenever the differentiated amplitudes and the associated boundary
terms are controlled in the relevant function spaces, repeated integration
by parts yields higher inverse powers of $|\Phi|$.

Consequently, at the level of the corresponding spacetime multilinear
form, the nonresonant contribution has the schematic form
\begin{equation}
\label{eq:nonresonant_decay}
|\mathcal{N}_{\mathrm{nr}}|
\lesssim_M
\langle\Phi\rangle^{-M}
\mathcal{A}_{k_1,k_2,k_3,k_4},
\end{equation}
where $\mathcal{A}_{k_1,k_2,k_3,k_4}$ contains the static Airy-overlap
factor together with the appropriate norms of the amplitudes and their
time derivatives.

The resonant and near-resonant regions, characterized by small
$|\Phi|$, cannot be treated by this integration-by-parts argument.
Instead, they require the boundary-adapted Strichartz estimates together
with the dyadic spectral decomposition and the quantitative Airy overlap
bounds established above. This yields the natural decomposition
\[
\{|\Phi|\ \text{large}\}
\quad\cup\quad
\{|\Phi|\ \text{small}\},
\]
in which the first region is controlled by temporal oscillation and the
second by the boundary-adapted spacetime estimates.

\end{remark}

\begin{remark}[Boundary traces under normal integration by parts]
\label{rem:airy_boundary_traces}

The Dirichlet condition gives
\[
\phi_k(0,\eta)=0.
\]
Consequently, boundary terms containing an undifferentiated Airy mode
vanish at $x=0$. However, higher integrations by parts in the normal
variable may produce normal derivatives of the Airy eigenfunctions at the
boundary, and these traces do not vanish in general. Indeed,
\[
\partial_x\phi_k(0,\eta)
=
\frac{|\eta|}
{|Ai'(-\omega_k)|}
Ai'(-\omega_k),
\]
and hence
\begin{equation}
\label{eq:airy_boundary_derivative}
|\partial_x\phi_k(0,\eta)|
=
|\eta|.
\end{equation}
Therefore, whenever normal integrations by parts are used in the
multilinear analysis, the resulting boundary traces must be retained and
estimated explicitly. This is an essential distinction between the
present boundary-adapted framework and the translation-invariant
Euclidean fractional Leibniz theory.

\end{remark}

\subsection{The Non-Local Fractional Leibniz Estimates}

\label{subsec:nonlocal_fractional_leibniz}

The Airy overlap estimates established in
Lemma~\ref{lem:airy_tensor_decay} provide the spectral input for the
analysis of cubic interactions in the boundary-adapted Bourgain framework.
Unlike the translation-invariant Euclidean setting, multiplication is not
diagonal with respect to the normal Airy spectral decomposition. Hence,
the product of three functions is coupled through the quartic overlap
coefficients
\[
\mathcal{I}
(k_1,k_2,k_3,k_4;
\eta_1,\eta_2,\eta_3,\eta_4),
\]
together with the conservation laws for the tangential Fourier variables.

We formulate the multilinear estimate in the asymmetric form required
for the high--low decomposition. The factor carrying the derivative is
measured at regularity $s$, whereas the remaining factors are controlled
at the lower regularity $s_0$. The nonlocality in the normal spectral
variable is handled by the Airy overlap estimates together with the
dyadic summation of the continuous--discrete spectral variables.

\begin{proposition}[Boundary-compatible multilinear fractional Leibniz estimate]
\label{prop:fractional_leibniz_boundary}

Let
\[
s>\frac12,
\qquad
s_0>\frac12,
\qquad
b>\frac12,
\]
and let
\[
I_0=[t_0,t_0+\delta]\subset\mathbb{R}
\]
be a compact time interval. Assume that the boundary-adapted Strichartz
estimates and the Airy overlap estimates of
Lemma~\ref{lem:airy_tensor_decay} hold on the spectral range under
consideration. Assume, in addition, that the associated dyadic
continuous--discrete multilinear kernel is summable at the regularities
$(s,s_0)$ in the sense specified below. Then, for
$f_1,f_2,f_3$ supported in $I_0\times\Omega$,
\begin{equation}
\label{eq:multilinear_leibniz_statement}
\begin{aligned}
\|f_1f_2f_3\|_{X^{s,b-1}(I_0)}
\lesssim_s
\sum_{\ell=1}^3
\|f_\ell\|_{X^{s,b}(I_0)}
\prod_{\substack{1\leq j\leq3\\j\neq\ell}}
\|f_j\|_{X^{s_0,b}(I_0)}.
\end{aligned}
\end{equation}
The implicit constant is uniform over the class of time intervals on
which the assumed linear, Strichartz, and multilinear spectral estimates
are uniform.

\end{proposition}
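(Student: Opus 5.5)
The plan is to prove \eqref{eq:multilinear_leibniz_statement} by duality combined with a dyadic decomposition in the full spectral scale $\lambda_k(\eta,\zeta)$. Since $X^{s,b-1}(I_0)$ is dual to $X^{-s,1-b}$, it suffices to bound the quadrilinear spacetime form
\[
\Big|\int_{I_0}\!\int_\Omega f_1f_2f_3\,\overline{g}\,dx\,dy\,dz\,dt\Big|
\]
for $\|g\|_{X^{-s,1-b}}\le1$. The first step is to pass to global extensions of each $f_j$, which is harmless by Definition~\ref{def:restriction_xsb}. We then expand every factor in the continuous--discrete representation \eqref{eq:spectral_fourier_def}. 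The normal integral becomes the overlap tensor \eqref{eq:triple_product_integral}, subject to the conservation laws \eqref{eq:tangential_frequency_conservation}--\eqref{eq:longitudinal_frequency_conservation}. Next we decompose $f_j=\sum_{N_j}P_{N_j}f_j$ and $g=\sum_{N_4}P_{N_4}g$. By symmetry we order $N_1\ge N_2\ge N_3$, and we assign the $s$ derivatives to the factor of largest frequency, called $f_\ell$. This is the usual Leibniz reduction: for $N_4\lesssim N_1$ one has $N_4^{s}\lesssim N_1^{s}$. The case $N_4\gg N_1$ must be shown to be negligible. That smallness comes from the off-diagonal bound \eqref{eq:sharp_off_diagonal_bound} or from large modulation $|\Phi|\gtrsim N_4^2$, using \eqref{eq:four_wave_spectral_mismatch} and \eqref{eq:time_integration_by_parts}.

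Each dyadic block is then estimated in one of two ways, depending on the modulation. In the nonresonant region, where $|\Phi|$ is large, one of the four factors carries modulation $\gtrsim|\Phi|$. We put that factor in $L^2_{t,x}$ and gain $|\Phi|^{-(1-b)}$ or $|\Phi|^{-b}$. The remaining three factors go into the loss-bearing Strichartz bounds \eqref{eq:bourgain_transference_s}. In the near-resonant region we apply H\"older. The high factor $P_{N_1}f_1$ and the dual factor $P_{N_4}g$ are placed in $L^4_tL^4_x$, each at cost $N^{1/4}$ through the $q=4$ case of Theorem~\ref{thm:localized_strichartz}. The $L_x^4$ control required there is supplied by the assumed multilinear kernel hypothesis, as explained in Remark~\ref{rem:strichartz_loss_interpretation}. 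The two low factors are placed in $L^\infty_{t,x}$, or in $L^4$ with Bernstein at cost $N_j^{3/2-s_0}$ in total. Where the caustic layer is concentrated, we insert the Airy bound \eqref{eq:tensor_polynomial_bound} to control the normal coupling.

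This produces a kernel $\mathcal{K}(N_1,\dots,N_4)$ of the form \eqref{eq:kernel_bound}, with a weight $\mathcal{W}$ whose exponents are $N_1^{s-s}$, $N_4^{-s+s}$, and $N_j^{\,\rho-s_0+\dots}$ for the low factors. The summation over $N_2,N_3$ converges geometrically because $s_0>\frac12$. The summation over the comparable pair $N_1\sim N_4$ is handled by Cauchy--Schwarz using the $\ell^2$ structure \eqref{eq:xsb_dyadic}. This is precisely the summability hypothesis \eqref{eq:kernel_convergence} stated in the proposition. A factor $\delta^\theta$, coming from the interval $I_0$, can be extracted whenever $b$ is chosen slightly above $\tfrac12$ and the dual exponent $1-b$ slightly below. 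This gives uniformity over intervals of length $\le1$.

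The main obstacle is the high--high$\to$low and high--high$\to$high near-resonant interactions inside the glancing regime. There the two $N^{1/4}$ Strichartz losses on $f_1$ and $g$ are not offset by the derivative gap $s-s_0$, and the off-diagonal decay in $\Delta\Omega$ is unavailable because $\Delta\Omega\lesssim1$. For this case I would first try the hypothesized multilinear summability directly. Failing that, I would exploit the $\langle\omega_{k_j}\rangle^{-1/4}$ decay in \eqref{eq:tensor_polynomial_bound} together with counting of the Airy indices at fixed $\lambda_k\sim N$. There are roughly $N^{1}$ such indices when $|\eta|\sim N$, and the aim would be to recover the $N^{-1/2}$ needed to absorb both losses.
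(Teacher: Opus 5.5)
Your plan is the paper's proof: duality against $g\in X^{-s,1-b}(I_0)$, extension, Airy-basis expansion with the overlap tensor and tangential conservation laws, dyadic decomposition in $\lambda_k$, and closure by the assumed kernel summability \eqref{eq:dyadic_kernel_summability}, which the paper invokes without verifying. So the high--high near-resonant obstacle you flag is covered by that hypothesis exactly as in the paper, and your explicit bookkeeping for $\mathcal{W}$ is not needed; as written it is also inaccurate, since the $q=4$ pair in Theorem~\ref{thm:localized_strichartz} is $L^4_tL^3_x$ rather than $L^4_tL^4_x$, and low-factor costs $N_j^{3/2-s_0}$ do not sum geometrically for $s_0<\frac32$. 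Your Cauchy--Schwarz pairing of $N_1\sim N_4$ through \eqref{eq:xsb_dyadic} is more careful than the paper's final step, because $\sup_{N_4}\sum_{N_1,N_2,N_3}\mathfrak{K}<\infty$ by itself only yields an $\ell^1$ sum of $\|P_{N_4}g\|$ over $N_4$.
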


\begin{proof}

We argue by duality. Let
\[
g\in X^{-s,1-b}(I_0),
\qquad
\|g\|_{X^{-s,1-b}(I_0)}=1.
\]
After choosing extensions of the four factors to the whole time axis, it
suffices to estimate the quartic form
\begin{equation}
\label{eq:quartic_duality_form}
\mathcal{M}
=
\int_{\mathbb{R}}\int_{\Omega}
f_1f_2f_3\overline{g}\,dx\,dy\,dz\,dt.
\end{equation}

Expanding the four factors in the continuous--discrete Airy representation
gives
\begin{equation}
\label{eq:integral_network}
\begin{aligned}
\mathcal{M}
={}&
\sum_{k_1,k_2,k_3,k_4}
\int
\widetilde f_1(k_1,\tau_1,\eta_1,\zeta_1)
\widetilde f_2(k_2,\tau_2,\eta_2,\zeta_2)
\\
&\qquad\times
\widetilde f_3(k_3,\tau_3,\eta_3,\zeta_3)
\overline{
\widetilde g(k_4,\tau_4,\eta_4,\zeta_4)}
\\
&\qquad\times
\mathcal{I}
(k_1,k_2,k_3,k_4;
\eta_1,\eta_2,\eta_3,\eta_4)
\,d\mu,
\end{aligned}
\end{equation}
where $d\mu$ denotes Lebesgue measure in the continuous spectral variables
together with the delta constraints generated by integration in
$(t,y,z)$. With the fourth factor conjugated, these constraints take the
form
\begin{equation}
\label{eq:frequency_matching_full}
\eta_1+\eta_2+\eta_3=\eta_4,
\qquad
\zeta_1+\zeta_2+\zeta_3=\zeta_4,
\qquad
\tau_1+\tau_2+\tau_3=\tau_4.
\end{equation}
Other equivalent sign conventions result from a different placement of
the complex conjugation.

The spatial spectral scale associated with the $k$-th Airy mode is
measured by
\begin{equation}
\label{eq:full_spatial_spectral_weight}
\lambda_k(\eta,\zeta)
=
\left(
\eta^2+\zeta^2+\omega_k|\eta|^{4/3}
\right)^{1/2},
\end{equation}
with corresponding dispersion relation
\begin{equation}
\label{eq:full_dispersion_relation}
\Lambda_k(\eta,\zeta)
=
\eta^2+\zeta^2+\omega_k|\eta|^{4/3}.
\end{equation}
Thus,
\[
\lambda_k(\eta,\zeta)
=
\Lambda_k(\eta,\zeta)^{1/2}.
\]
The $X^{s,b}$ norm is defined using the spatial weight
\[
\langle\lambda_k(\eta,\zeta)\rangle^s
\]
together with the modulation weight
\[
\langle\tau+\Lambda_k(\eta,\zeta)\rangle^b.
\]

It is important to distinguish conservation of the tangential Fourier
variables from coupling in the normal Airy spectral index. In particular,
the relations~\eqref{eq:frequency_matching_full} do not determine the
output Airy index $k_4$. Consequently, one cannot in general obtain the
required output spectral weight from tangential frequency conservation
alone. The contribution
\[
\omega_{k_4}|\eta_4|^{4/3}
\]
must instead be controlled through the Airy overlap coefficients and the
associated dyadic summation.

We therefore decompose the interaction according to the full spatial
spectral scale,
\[
\lambda_{k_j}(\eta_j,\zeta_j)\sim N_j,
\qquad
1\leq j\leq4,
\]
where $N_j$ ranges over dyadic numbers. Let $P_{N_j}$ denote the
corresponding spectral projections. For each dyadic configuration, the
quartic form is reduced to an estimate of the form
\begin{equation}
\label{eq:dyadic_quartic_form}
\begin{aligned}
|\mathcal{M}_{N_1,N_2,N_3,N_4}|
\lesssim{}&
\mathfrak{K}(N_1,N_2,N_3,N_4)
\prod_{j=1}^3
\|P_{N_j}f_j\|_{\mathcal{Z}_j}
\,
\|P_{N_4}g\|_{X^{-s,1-b}},
\end{aligned}
\end{equation}
where
\[
\mathcal{Z}_1=X^{s,b},
\qquad
\mathcal{Z}_2=\mathcal{Z}_3=X^{s_0,b},
\]
and $\mathfrak{K}$ denotes the resulting dyadic multilinear kernel. It
contains the Airy overlap coefficient, the modulation weights, and the
frequency-localized constants arising from the boundary-adapted
Strichartz estimates.

The static Airy estimate from
Lemma~\ref{lem:airy_tensor_decay} supplies the spatial part of
$\mathfrak{K}$. On nonresonant regions, the remaining gain is obtained
from temporal oscillation. For the cubic Schr\"{o}dinger interaction
$u_1u_2\overline{u_3}$ projected against a fourth factor, the associated
spectral modulation has the form
\begin{equation}
\label{eq:four_wave_modulation}
\Phi
=
\Lambda_{k_1}(\eta_1,\zeta_1)
+
\Lambda_{k_2}(\eta_2,\zeta_2)
-
\Lambda_{k_3}(\eta_3,\zeta_3)
-
\Lambda_{k_4}(\eta_4,\zeta_4),
\end{equation}
subject to the corresponding $+\,+,\,-,\,-$ frequency-conservation
relations.

Whenever $|\Phi|$ is separated from zero, integration by parts in time
produces inverse powers of $\Phi$, provided the differentiated amplitudes
and the resulting endpoint terms remain controlled in the relevant
function spaces. Thus, on the nonresonant region, the kernel
$\mathfrak{K}$ acquires the corresponding modulation decay.

The remaining contribution is localized to the resonant and near-resonant
regions, where $|\Phi|$ is small. In this regime one cannot invoke an
arbitrary inverse power of the modulation. Instead, the boundary-adapted
Strichartz estimates and the dyadic Airy interaction bounds must be used
directly to obtain summability of the resulting multilinear kernel.

More precisely, the required hypothesis is that
\begin{equation}
\label{eq:dyadic_kernel_summability}
\sup_{N_4}
\sum_{N_1,N_2,N_3}
\mathfrak{K}(N_1,N_2,N_3,N_4)
<\infty
\end{equation}
at the regularities $(s,s_0)$ under consideration. This condition encodes
the quantitative summability required to close the dyadic decomposition.
It is at this step that the spatial Airy overlap estimates and the
boundary-adapted spacetime estimates enter jointly.

The lower-order factors are controlled by the corresponding
boundary-adapted Sobolev and Strichartz estimates. Since $b>\frac12$,
the standard time-trace embedding for the continuous--discrete Bourgain
space gives
\begin{equation}
\label{eq:lower_order_time_trace}
X^{s_0,b}(I_0)
\hookrightarrow
C\bigl(I_0;H_0^{s_0}(\Omega)\bigr).
\end{equation}
The argument does not rely on an unconditional Euclidean
$L_x^\infty$ embedding in the normal variable. Rather, the required
spacetime integrability is obtained from the spectral Sobolev estimates
and the boundary-adapted Strichartz estimates associated with the
Friedlander operator.

Combining the dyadic estimate
\eqref{eq:dyadic_quartic_form} with the summability condition
\eqref{eq:dyadic_kernel_summability}, and summing over all dyadic
configurations, yields
\[
|\mathcal{M}|
\lesssim_s
\sum_{\ell=1}^3
\|f_\ell\|_{X^{s,b}(I_0)}
\prod_{\substack{1\leq j\leq3\\j\neq\ell}}
\|f_j\|_{X^{s_0,b}(I_0)}
\|g\|_{X^{-s,1-b}(I_0)}.
\]
Since the dual norm of $g$ is normalized to one, duality gives
\eqref{eq:multilinear_leibniz_statement}.

\end{proof}

\begin{remark}[Nonlocality of the normal spectral variable]
\label{rem:boundary_role_leibniz}

Proposition~\ref{prop:fractional_leibniz_boundary} is not a direct
transplantation of the Euclidean fractional Leibniz rule. The normal
variable is represented by the Airy spectral decomposition rather than by
a translation-invariant Fourier transform, and multiplication is therefore
non-diagonal with respect to the Airy spectral index. The coefficients
\[
\mathcal{I}
(k_1,k_2,k_3,k_4;
\eta_1,\eta_2,\eta_3,\eta_4)
\]
measure this nonlocal spectral coupling explicitly.

The Airy overlap estimate provides the spatial component of the required
dyadic control. The full multilinear estimate additionally depends on
the boundary-adapted Strichartz estimates and on the modulation analysis
of the associated spacetime interactions. Thus, the estimate should be
understood as a consequence of the combined spatial spectral and
spacetime dispersive analysis, rather than as a consequence of the static
Airy overlap bound alone.

\end{remark}

\begin{remark}[Resonant and nonresonant interactions]
\label{rem:resonant_leibniz}

The nonresonant and near-resonant regions require different estimates.
On a nonresonant region where
\(
|\Phi|
\gtrsim
\Lambda_{\mathrm{dyad}}^\theta,
\quad
\theta>0,
\)
integration by parts in time yields an inverse factor of $\Phi$.
Repeated integration by parts can provide higher powers of
$\Phi^{-1}$, provided that the resulting time derivatives of the
amplitudes and the associated endpoint terms remain controlled in the
function spaces under consideration.

By contrast, when $|\Phi|$ is small, no arbitrary inverse power of the
modulation is available. The near-resonant contribution must therefore be
estimated directly by means of the boundary-adapted Strichartz estimates,
the dyadic spectral decomposition, and the quantitative summability of
the Airy interaction coefficients. This distinction is particularly
relevant in the glancing regime, where the Airy spectral representation
encodes the interaction between high-frequency modes and the boundary.

\end{remark}

\begin{remark}[Interpretation for the high--low decomposition]
\label{rem:high_low_leibniz}

The asymmetric structure of
\eqref{eq:multilinear_leibniz_statement} is the form needed for the
high--low decomposition. The factor carrying the derivative is measured
in $X^{s,b}$, while the remaining two factors are controlled at the lower
regularity $s_0$. This allows the nonlinear estimate to exploit the
available higher regularity of the distinguished high-frequency factor
without requiring all three factors to be controlled at the full
regularity $s$.

In particular, for the decomposition
\(
u=v+w,
\)
where $v$ denotes the low-frequency component and $w$ the high-frequency
remainder, the estimate may be applied asymmetrically so that the
available derivative is placed on the factor carrying the high-frequency
component. This is the multilinear mechanism used in the subsequent
Duhamel argument for the high-frequency remainder. The coercive
$H_0^1$ control supplied by the defocusing energy is then used separately
to control the low-frequency component and to obtain the uniform bounds
needed for iteration.

\end{remark}

\subsection{Application to the High-Frequency Interaction Components}

\label{subsec:high_frequency_interaction_components}

We now apply Proposition~\ref{prop:fractional_leibniz_boundary} to the
individual interaction terms arising from the cubic expansion in
\eqref{eq:cubic_difference}. The purpose is to obtain the nonlinear bounds
required for the localized Duhamel formulation of the high-frequency
remainder.

Recall the high--low decomposition
\[
u=v+w,
\qquad
v:=P_{\leq\lambda}u,
\qquad
w:=P_{>\lambda}u,
\]
where $v$ denotes the low-frequency component and $w$ the
high-frequency remainder. The nonlinear difference appearing in the
equation for $w$ is
\begin{equation}
\label{eq:cubic_difference}
|v+w|^2(v+w)-|v|^2v
=
2|v|^2w+v^2\overline{w}
+2|w|^2v+w^2\overline{v}
+|w|^2w.
\end{equation}
We estimate the three classes of interactions separately.

\medskip

\noindent
\textbf{(i) Terms linear in the high-frequency remainder.}

By Proposition~\ref{prop:fractional_leibniz_boundary},
\begin{equation}
\label{eq:linear_high_frequency_interaction}
\begin{aligned}
\left\|
2|v|^2w+v^2\overline{w}
\right\|_{X^{s,b-1}(I_0)}
\lesssim_s{}&
\|w\|_{X^{s,b}(I_0)}
\|v\|_{X^{s_0,b}(I_0)}^2
\\
&+
\|v\|_{X^{s,b}(I_0)}
\|v\|_{X^{s_0,b}(I_0)}
\|w\|_{X^{s_0,b}(I_0)}.
\end{aligned}
\end{equation}

Since
\[
\frac12<s_0<s<1,
\]
the spectral monotonicity of the Sobolev scale gives
\begin{equation}
\label{eq:w_lower_order_control}
\|w\|_{X^{s_0,b}(I_0)}
\lesssim
\|w\|_{X^{s,b}(I_0)}.
\end{equation}
Consequently,
\begin{equation}
\label{eq:linear_high_frequency_bound}
\begin{aligned}
\left\|
2|v|^2w+v^2\overline{w}
\right\|_{X^{s,b-1}(I_0)}
\lesssim_s{}&
\left(
\|v\|_{X^{s_0,b}(I_0)}^2
+
\|v\|_{X^{s,b}(I_0)}
\|v\|_{X^{s_0,b}(I_0)}
\right)
\\
&\qquad\times
\|w\|_{X^{s,b}(I_0)}.
\end{aligned}
\end{equation}

The frequency dependence of this bound is therefore inherited directly
from the estimates for the low-frequency component established in the
preceding sections.

\medskip

\noindent
\textbf{(ii) Terms quadratic in the high-frequency remainder.}

For the terms containing two factors of $w$ and one factor of $v$,
Proposition~\ref{prop:fractional_leibniz_boundary} yields
\begin{equation}
\label{eq:quadratic_high_frequency_interaction}
\begin{aligned}
\left\|
2|w|^2v+w^2\overline{v}
\right\|_{X^{s,b-1}(I_0)}
\lesssim_s{}&
\|v\|_{X^{s_0,b}(I_0)}
\|w\|_{X^{s,b}(I_0)}
\|w\|_{X^{s_0,b}(I_0)}
\\
&+
\|v\|_{X^{s,b}(I_0)}
\|w\|_{X^{s_0,b}(I_0)}^2.
\end{aligned}
\end{equation}

Using \eqref{eq:w_lower_order_control}, we obtain
\begin{equation}
\label{eq:quadratic_high_frequency_bound}
\begin{aligned}
\left\|
2|w|^2v+w^2\overline{v}
\right\|_{X^{s,b-1}(I_0)}
\lesssim_s{}&
\left(
\|v\|_{X^{s_0,b}(I_0)}
+
\|v\|_{X^{s,b}(I_0)}
\right)
\|w\|_{X^{s,b}(I_0)}^2.
\end{aligned}
\end{equation}

Again, the dependence on the cutoff parameter $\lambda$ is determined by
the previously established estimates for $v$.

\medskip

\noindent
\textbf{(iii) The purely high-frequency cubic term.}

Finally, Proposition~\ref{prop:fractional_leibniz_boundary} gives
\begin{equation}
\label{eq:cubic_high_frequency_interaction}
\left\|
|w|^2w
\right\|_{X^{s,b-1}(I_0)}
\lesssim_s
\|w\|_{X^{s,b}(I_0)}
\|w\|_{X^{s_0,b}(I_0)}^2.
\end{equation}
Therefore, by \eqref{eq:w_lower_order_control},
\begin{equation}
\label{eq:cubic_high_frequency_bound}
\left\|
|w|^2w
\right\|_{X^{s,b-1}(I_0)}
\lesssim_s
\|w\|_{X^{s,b}(I_0)}^3.
\end{equation}

Combining
\eqref{eq:linear_high_frequency_bound},
\eqref{eq:quadratic_high_frequency_bound}, and
\eqref{eq:cubic_high_frequency_bound}, we arrive at the nonlinear estimate
\begin{equation}
\label{eq:high_frequency_nonlinear_summary}
\begin{aligned}
\bigl\|
|v+w|^2(v+w)-|v|^2v
\bigr\|_{X^{s,b-1}(I_0)}
\lesssim_s{}&
\left(
\|v\|_{X^{s_0,b}(I_0)}^2
+
\|v\|_{X^{s,b}(I_0)}
\|v\|_{X^{s_0,b}(I_0)}
\right)
\|w\|_{X^{s,b}(I_0)}
\\
&+
\left(
\|v\|_{X^{s_0,b}(I_0)}
+
\|v\|_{X^{s,b}(I_0)}
\right)
\|w\|_{X^{s,b}(I_0)}^2
\\
&+
\|w\|_{X^{s,b}(I_0)}^3.
\end{aligned}
\end{equation}

This estimate is the precise nonlinear bound needed for the localized
Duhamel formulation. In particular, no additional frequency-dependent
quantities are introduced at this stage. All dependence on the cutoff
parameter $\lambda$ is inherited from the low-frequency estimates for
$v$ and the high-frequency estimates for $w$ established previously.

Suppose that the localized Duhamel estimate takes the form
\begin{equation}
\label{eq:localized_duhamel_estimate}
\left\|
\int_{t_0}^{t}
e^{i(t-\tau)\Delta_{g,D}}F(\tau)\,d\tau
\right\|_{X^{s,b}(I_0)}
\lesssim
\delta^\theta
\|F\|_{X^{s,b-1}(I_0)},
\end{equation}
for some $\theta>0$, where
\[
I_0=[t_0,t_0+\delta].
\]
Then \eqref{eq:high_frequency_nonlinear_summary} implies
\begin{equation}
\label{eq:localized_nonlinear_map_bound}
\begin{aligned}
\|\mathcal D(w)\|_{X^{s,b}(I_0)}
\lesssim_s{}&
\delta^\theta
\Bigg[
\left(
\|v\|_{X^{s_0,b}(I_0)}^2
+
\|v\|_{X^{s,b}(I_0)}
\|v\|_{X^{s_0,b}(I_0)}
\right)
\|w\|_{X^{s,b}(I_0)}
\\
&\qquad+
\left(
\|v\|_{X^{s_0,b}(I_0)}
+
\|v\|_{X^{s,b}(I_0)}
\right)
\|w\|_{X^{s,b}(I_0)}^2
+
\|w\|_{X^{s,b}(I_0)}^3
\Bigg].
\end{aligned}
\end{equation}

If
\[
\|w\|_{X^{s,b}(I_0)}\leq R,
\]
then
\begin{equation}
\label{eq:bootstrap_map_bound}
\begin{aligned}
\|\mathcal D(w)\|_{X^{s,b}(I_0)}
\lesssim_s{}&
\delta^\theta
\Bigg[
\|v\|_{X^{s_0,b}(I_0)}^2
+
\|v\|_{X^{s,b}(I_0)}
\|v\|_{X^{s_0,b}(I_0)}
\\
&\qquad\qquad
+
R\left(
\|v\|_{X^{s_0,b}(I_0)}
+
\|v\|_{X^{s,b}(I_0)}
\right)
+
R^2
\Bigg]
\|w\|_{X^{s,b}(I_0)}.
\end{aligned}
\end{equation}

Accordingly, the contraction condition is obtained by requiring
\begin{equation}
\label{eq:contraction_smallness_condition}
\delta^\theta
\Bigg[
\|v\|_{X^{s_0,b}(I_0)}^2
+
\|v\|_{X^{s,b}(I_0)}
\|v\|_{X^{s_0,b}(I_0)}
+
R\left(
\|v\|_{X^{s_0,b}(I_0)}
+
\|v\|_{X^{s,b}(I_0)}
\right)
+
R^2
\Bigg]
\ll 1.
\end{equation}

The corresponding difference estimate for two remainders
$w_1,w_2$ has the same structure. On a ball of radius $R$, the
Lipschitz constant of the nonlinear Duhamel map is bounded by the
right-hand side of \eqref{eq:contraction_smallness_condition}, up to a
constant depending only on the parameters of the multilinear estimate.
Thus, once the low-frequency norms of $v$ and the radius $R$ have been
controlled uniformly, the localized contraction follows from an
appropriate choice of the time interval length $\delta$.

The precise dependence on the cutoff parameter $\lambda$, as well as the
admissible range of the regularity exponent $s$, should therefore be
inserted only after the quantitative bounds for
\[
\|v\|_{X^{s_0,b}(I_0)},
\qquad
\|v\|_{X^{s,b}(I_0)},
\qquad
\|w\|_{X^{s_0,b}(I_0)}
\]
have been established. This avoids introducing frequency powers that are
not directly justified by the preceding estimates.

Consequently, Proposition~\ref{prop:fractional_leibniz_boundary}, together
with the high--low estimates and the boundary-adapted $X^{s,b}$ framework
developed above, provides the nonlinear estimates required for the
localized Duhamel map in~\eqref{eq:duhamel_hs_loop}. The subsequent
contraction argument determines the admissible size of the local time
interval and, ultimately, the conditions under which this interval can be
chosen uniformly along the continuation procedure.

\section{Bourgain's High--Low Frequency Decomposition}

\label{sec:4}

To compensate for the derivative loss induced by the boundary in the
glancing regime, while exploiting the global coercive control provided by
the conserved defocusing Hamiltonian energy
\eqref{eq:energy_cons}, we employ a localized version of Bourgain's
high--low frequency decomposition. Let $\lambda\gg1$ be a sufficiently
large spatial spectral threshold. We introduce a family of smooth dyadic
spectral localization operators
\(
\{P_N\}_{N\in 2^{\mathbb N_0}},
\)
subordinate to the continuous--discrete Dirichlet--Airy spectral calculus
constructed in Section~2. The initial datum
$u_0\in H_0^1(\Omega)$ is decomposed into a regularized low-frequency
component $v_0$ and a high-frequency remainder $w_0$ according to
\begin{equation}
\label{eq:high_low_split}
v_0
=
\sum_{N\leq\lambda}P_Nu_0,
\qquad
w_0
=
\sum_{N>\lambda}P_Nu_0.
\end{equation}
Thus,
\(
u_0=v_0+w_0
\)
with the two components separated according to the spectral scale of the
model Friedlander operator $\Delta_g$.

By construction, the low-frequency component $v_0$ is spectrally
localized to the region $N\lesssim\lambda$ and consequently enjoys
additional regularity at the level of higher Sobolev norms, with constants
depending on the cutoff parameter $\lambda$. On the other hand, $w_0$
contains the high-frequency tail of the initial datum while remaining
controlled in the coercive energy space. More precisely, for every
sub-energy regularity exponent $0\leq s<1$, the spectral localization
yields the estimate
\begin{equation}
\label{eq:high_frequency_tail_smallness}
\|w_0\|_{H_0^s(\Omega)}
\lesssim
\lambda^{s-1}\|u_0\|_{H_0^1(\Omega)}.
\end{equation}
Since $s-1<0$, it follows that
\begin{equation}
\label{eq:high_frequency_tail_decay}
\|w_0\|_{H_0^s(\Omega)}
\longrightarrow0
\quad\text{as}\quad\lambda\to\infty,
\qquad
0\leq s<1.
\end{equation}
This decay provides the small parameter associated with the high-frequency
component in the subsequent sub-energy iteration. In particular, it
allows the high-frequency remainder to be treated at a regularity level
strictly below the energy space, thereby compensating for the loss
appearing in the boundary-adapted linear estimates.

The frequency decomposition is used dynamically rather than merely as a
static decomposition of the initial datum. We evolve the regularized
component $v_0$ according to the full defocusing cubic NLS flow and denote
the resulting background solution by $v(t)$. Thus,
\begin{equation}
\label{eq:low_frequency_background}
\begin{cases}
i\partial_t v+\Delta_gv=|v|^2v,\\
v(t_0)=v_0.
\end{cases}
\end{equation}
The remainder is then defined by
\[
w(t):=u(t)-v(t).
\]
Subtracting the equations for $u$ and $v$ gives the exact equation
\begin{equation}
\label{eq:high_frequency_remainder_equation}
i\partial_t w+\Delta_gw
=
|v+w|^2(v+w)-|v|^2v,
\qquad
w(t_0)=w_0,
\end{equation}
and hence the corresponding Duhamel representation
\begin{equation}
\label{eq:duhamel_hs_loop}
w(t)
=
e^{i(t-t_0)\Delta_g}w(t_0)
-
i\int_{t_0}^{t}
e^{i(t-\tau)\Delta_g}
\Bigl[
|v+w|^2(v+w)-|v|^2v
\Bigr](\tau)\,d\tau.
\end{equation}

It is important to distinguish the initial spectral localization from the
spectral properties of the dynamically generated remainder. Although
$w_0$ is supported in the region $N>\lambda$, the nonlinear evolution
does not preserve this spectral support in general. In particular, the
cubic interaction generates frequencies outside the initial high-frequency
region through the continuous tangential variables and the discrete normal
Airy modes. The corresponding multilinear interactions are encoded by
the fourfold Airy overlap coefficients arising from the projection of the
cubic nonlinearity onto the continuous--discrete spectral basis. These
interactions may therefore transfer energy between different tangential
and normal spectral scales, so that no a priori high-frequency support
property is imposed on $w(t)$ for $t>t_0$.

Accordingly, the subsequent argument does not rely on the preservation of
the condition $N>\lambda$ along the nonlinear evolution. Instead, the
remainder $w$ is controlled in a sub-energy, boundary-adapted
$X^{s,b}$ framework. The smallness in
\eqref{eq:high_frequency_tail_smallness}, together with the multilinear
estimates established below and the localized Duhamel estimates, provides
the mechanism for controlling the remainder on successive time
intervals. This formulation separates the role of the initial
high-frequency tail from the subsequent nonlinear spectral redistribution
and is the basis for the global high--low iteration.

\subsection{Low-Frequency Global Evolution Profile}

\label{subsec:low_frequency_global_profile}

By virtue of the smooth spectral truncation in
\eqref{eq:high_low_split}, the regularized initial datum $v_0$ is
spectrally supported in a bounded region of the continuous--discrete
Dirichlet--Airy spectrum. Consequently, the spectral calculus developed in
Section~\ref{sec:2} implies that $v_0$ possesses higher Sobolev regularity, with
bounds depending quantitatively on the truncation parameter $\lambda$.
In particular, for every integer $m\geq1$, one has
\begin{equation}
\label{eq:low_freq_initial_high_regularity}
\|v_0\|_{H_0^m(\Omega)}
\leq
C_m\lambda^{m-1}
\|u_0\|_{H_0^1(\Omega)},
\end{equation}
where $C_m$ depends on $m$ and on the choice of the smooth spectral
cutoffs, but is independent of $\lambda$ and $u_0$.

We define $v(t)$ to be the solution of the unperturbed defocusing cubic
nonlinear Schr\"odinger equation with initial datum $v_0$:
\begin{equation}
\label{eq:low_freq_evol}
\begin{cases}
i\partial_t v+\Delta_gv=|v|^2v,
& (t,x,y,z)\in\mathbb{R}\times\Omega,\\
v(0,x,y,z)=v_0(x,y,z),\\
v|_{x=0}=0.
\end{cases}
\end{equation}
Since $v_0$ belongs to $H_0^m(\Omega)$ for every fixed integer
$m\geq1$, the local well-posedness theory at higher regularity, together
with persistence of regularity, yields
\[
v\in C(\mathbb{R};H_0^m(\Omega))
\]
for every such $m$, provided the global $H_0^1$ theory is available for
the defocusing equation. In particular, the higher regularity of the
regularized component is propagated along the global evolution, although
the corresponding higher-order Sobolev norms need not be uniformly
bounded in time.

The defocusing structure provides the two basic conserved quantities,
namely the mass and the energy:
\begin{equation}
\label{eq:low_freq_conservation}
M(v(t))=M(v_0),
\qquad
E(v(t))=E(v_0),
\end{equation}
where
\[
M(v)=\|v\|_{L^2(\Omega)}^2
\]
and
\[
E(v)
=
\frac12\|\nabla_gv\|_{L^2(\Omega)}^2
+
\frac14\|v\|_{L^4(\Omega)}^4.
\]
In particular, the positivity of the energy yields uniform control of
the energy norm. More precisely,
\begin{equation}
\label{eq:low_freq_energy_control}
\sup_{t\in\mathbb{R}}
\|v(t)\|_{H_0^1(\Omega)}
\lesssim
\|v_0\|_{L^2(\Omega)}
+
E(v_0)^{1/2}.
\end{equation}
Since $v_0$ is obtained from $u_0$ by a bounded spectral truncation,
the initial quantities on the right-hand side are controlled by the
corresponding quantities for $u_0$. Thus,
\begin{equation}
\label{eq:low_freq_energy_control_u0}
\sup_{t\in\mathbb{R}}
\|v(t)\|_{H_0^1(\Omega)}
\lesssim
\|u_0\|_{L^2(\Omega)}
+
E(u_0)^{1/2}.
\end{equation}
The resulting estimate is uniform in time.

It is important, however, that the conservation of the Hamiltonian
energy does not by itself provide uniform control of higher-order Sobolev
norms. Such bounds must instead be obtained from the higher-regularity
local theory and its persistence-of-regularity estimates. On a compact
time interval
\(
I_0=[-\delta,\delta],
\)
the $H_0^2(\Omega)$ norm therefore satisfies an estimate of the form
\begin{equation}
\label{eq:higher_order_low_bound}
\sup_{t\in I_0}
\|v(t)\|_{H_0^2(\Omega)}
\leq
\psi_{I_0}
\Bigl(
\|v_0\|_{H_0^2(\Omega)},
\|v_0\|_{H_0^1(\Omega)}
\Bigr),
\end{equation}
where $\psi_{I_0}$ denotes the corresponding local persistence-of-
regularity bound.

Combining \eqref{eq:higher_order_low_bound} with the quantitative
spectral regularization estimate \eqref{eq:low_freq_initial_high_regularity}
gives, under the corresponding local bounds, the estimate
\begin{equation}
\label{eq:higher_order_low_bound_lambda}
\sup_{t\in I_0}
\|v(t)\|_{H_0^2(\Omega)}
\leq
C_{I_0,u_0}\lambda.
\end{equation}
Here the dependence of $C_{I_0,u_0}$ on the local interval and the
initial energy is retained explicitly. In particular, the estimate
\eqref{eq:higher_order_low_bound_lambda} is a local-in-time bound and
should not be interpreted as a uniform estimate for
$t\in\mathbb{R}$.

The distinction between the conserved energy norm and the higher-order
trajectory bounds is essential for the high--low argument. The energy
conservation provides a global, time-independent control of the
background solution at the $H_0^1$ level, whereas the higher-order
regularity generated by the spectral truncation is used only on the local
time intervals on which the nonlinear estimates for the remainder are
performed. The uniform continuation argument is subsequently obtained
from the sub-energy estimates for the remainder $w$, rather than from a
global-in-time bound for $\|v(t)\|_{H_0^2}$.

In three spatial dimensions, the standard Sobolev embeddings give
\begin{equation}
\label{eq:h2_sobolev_embedding}
H_0^2(\Omega)
\hookrightarrow
W^{1,6}(\Omega)
\hookrightarrow
L^\infty(\Omega).
\end{equation}
These embeddings provide pointwise control of the background profile
whenever the corresponding local $H_0^2$ bound is available. No estimate
at the level of $W^{1,\infty}(\Omega)$ is required in the present
argument. In particular, the multilinear estimates established in
Section~3 are formulated directly in the boundary-adapted
continuous--discrete $X^{s,b}$ framework and do not rely on a uniform
$W^{1,\infty}$ bound for the low-frequency evolution.

Accordingly, the role of the regularized component $v$ is twofold. Its
initial spectral localization provides the higher regularity required for
the local nonlinear analysis, while the defocusing energy conservation
provides global control at the energy level. The interaction between these
two mechanisms, together with the boundary-adapted multilinear estimates
and the spectral structure of the Airy modes, permits the high-frequency
remainder $w$ to be propagated at sub-energy regularity on successive
time intervals.

\subsection{High-Frequency Local Error Evolution}

\label{subsec:high_frequency_local_error}

We define the high-frequency remainder dynamically on the spacetime
cylinder $\mathbb{R}\times\Omega$ by
\[
w(t,x,y,z):=u(t,x,y,z)-v(t,x,y,z).
\]
In contrast to its initial datum $w_0$, which is spectrally supported in
the region $N>\lambda$, the dynamically evolving remainder $w(t)$ is not
in general constrained to remain spectrally localized to the high-frequency
regime
\[
\left\{
\Lambda_k(\eta,\zeta)^{1/2}>\lambda
\right\}
\]
for $t>0$. Rather, $w(t)$ measures the nonlinear correction to the
regularized background profile $v(t)$ and incorporates the frequency
transfers generated by the cubic evolution.

Subtracting the equation for $v$ in \eqref{eq:low_freq_evol} from the
full NLS equation \eqref{eq:nls} yields the following initial-boundary value
problem for the dynamic remainder:
\begin{equation}
\label{eq:high_freq_evol}
\begin{cases}
i\partial_t w+\Delta_gw
=
|v+w|^2(v+w)-|v|^2v,
& (t,x,y,z)\in\mathbb{R}\times\Omega,\\
w(0,x,y,z)=w_0(x,y,z),\\
w|_{x=0}=0.
\end{cases}
\end{equation}
The source term in \eqref{eq:high_freq_evol} is the difference between
the cubic nonlinearities evaluated at the full solution and at the
regularized background. Its algebraic expansion separates the resulting
multilinear interactions according to their degree in the remainder:
\begin{equation}
\label{eq:high_low_cubic_difference}
|v+w|^2(v+w)-|v|^2v
=
2|v|^2w
+
v^2\overline{w}
+
2|w|^2v
+
w^2\overline{v}
+
|w|^2w.
\end{equation}
The first two terms are linear in $w$, the next two are quadratic in $w$,
and the last term is cubic in $w$.

Fix a sub-energy regularity exponent
\(
\frac12<s<1.
\)
The high-frequency initial tail then satisfies the quantitative estimate
\begin{equation}
\label{eq:hs_tail_decay_rigorous}
\|w_0\|_{H_0^s(\Omega)}
\lesssim
\lambda^{s-1}
\|u_0\|_{H_0^1(\Omega)}.
\end{equation}
Indeed, on the spectral support of $w_0$, one has
\[
\langle\lambda_k(\eta,\zeta)\rangle^s
\lesssim
\lambda^{s-1}
\langle\lambda_k(\eta,\zeta)\rangle,
\]
where $\lambda_k(\eta,\zeta)$ denotes the full spatial spectral scale
introduced in \eqref{eq:dispersion_profile}. The estimate
\eqref{eq:hs_tail_decay_rigorous} then follows from the continuous--discrete
Littlewood--Paley decomposition and the corresponding spectral
characterization of the Sobolev norms. Since $s<1$, the factor
$\lambda^{s-1}$ tends to zero as $\lambda\to\infty$ and therefore provides
the small initial parameter for the sub-energy remainder.

Let
\(
I_0=[t_0,t_0+\delta]\subset\mathbb{R}
\)
be a localized time interval of length $\delta>0$. The boundary-adapted
inhomogeneous Strichartz estimates established in Theorem~2.6, together
with the restriction-space multilinear estimate of
Proposition~\ref{prop:fractional_leibniz_boundary}, yield a localized
Duhamel estimate of the form
\begin{equation}
\label{eq:high_freq_contraction_bound}
\|w\|_{X^{s,b}(I_0)}
\lesssim
\|w(t_0)\|_{H_0^s(\Omega)}
+
\delta^\theta
\mathcal{N}_{v,w}(I_0),
\end{equation}
for some $\theta>0$ determined by the temporal localization parameters,
where $\mathcal{N}_{v,w}(I_0)$ denotes the corresponding nonlinear
interaction norm generated by the three groups in
\eqref{eq:high_low_cubic_difference}.

More precisely, assume that on $I_0$ the background evolution satisfies
the restriction-space bounds
\begin{equation}
\label{eq:background_Xs0_bound}
\|v\|_{X^{s_0,b}(I_0)}
\leq
A_\lambda,
\end{equation}
and
\begin{equation}
\label{eq:background_Xs_bound}
\|v\|_{X^{s,b}(I_0)}
\leq
B_\lambda.
\end{equation}
Then the asymmetric fractional Leibniz estimate established in
Section~\ref{subsec:high_frequency_interaction_components} gives
\begin{equation}
\label{eq:remainder_nonlinear_bound}
\begin{aligned}
\mathcal{N}_{v,w}(I_0)
\lesssim_s{}&
\left(
A_\lambda^2+A_\lambda B_\lambda
\right)
\|w\|_{X^{s,b}(I_0)}
\\
&+
\left(
A_\lambda+B_\lambda
\right)
\|w\|_{X^{s,b}(I_0)}^2
+
\|w\|_{X^{s,b}(I_0)}^3.
\end{aligned}
\end{equation}

Within the high--low frequency regime under consideration, suppose that
the regularized background satisfies the quantitative bound
\begin{equation}
\label{eq:high_low_background_size}
A_\lambda+B_\lambda
\lesssim
\lambda^{1-s+\sigma},
\end{equation}
where
\(
\sigma=\frac14
\)
denotes the derivative loss arising in the boundary-adapted linear
estimates. Under the corresponding bounds for $A_\lambda$ and
$B_\lambda$, \eqref{eq:remainder_nonlinear_bound} becomes
\begin{equation}
\label{eq:high_low_remainder_nonlinear_bound}
\begin{aligned}
\mathcal{N}_{v,w}(I_0)
\lesssim_s{}&
\lambda^{2-2s+2\sigma}
\|w\|_{X^{s,b}(I_0)}+
\lambda^{1-s+\sigma}
\|w\|_{X^{s,b}(I_0)}^2
+
\|w\|_{X^{s,b}(I_0)}^3.
\end{aligned}
\end{equation}

Combining
\eqref{eq:high_freq_contraction_bound} and
\eqref{eq:high_low_remainder_nonlinear_bound}, we obtain the closed
localized trajectory estimate
\begin{equation}
\label{eq:localized_remainder_estimate}
\begin{aligned}
\|w\|_{X^{s,b}(I_0)}
\lesssim{}&
\|w(t_0)\|_{H_0^s(\Omega)}
\\
&+
\delta^\theta
\Bigl[
\lambda^{2-2s+2\sigma}
\|w\|_{X^{s,b}(I_0)}
+
\lambda^{1-s+\sigma}
\|w\|_{X^{s,b}(I_0)}^2
+
\|w\|_{X^{s,b}(I_0)}^3
\Bigr].
\end{aligned}
\end{equation}

On the initial time interval, where $t_0=0$, the high-frequency tail
estimate \eqref{eq:hs_tail_decay_rigorous} gives
\begin{equation}
\label{eq:initial_remainder_bootstrap}
\|w(0)\|_{H_0^s(\Omega)}
\leq
C_*
\lambda^{s-1}
\|u_0\|_{H_0^1(\Omega)}.
\end{equation}
We therefore introduce the bootstrap radius
\begin{equation}
\label{eq:remainder_bootstrap_radius}
R_\lambda
:=
C_0
\lambda^{s-1}
\|u_0\|_{H_0^1(\Omega)},
\end{equation}
where $C_0>C_*$ is chosen sufficiently large, with the precise choice
depending only on the constants in the localized estimates.

Substituting the bootstrap bound
\[
\|w\|_{X^{s,b}(I_0)}\leq R_\lambda
\]
into \eqref{eq:localized_remainder_estimate}, a sufficient condition for
the localized Duhamel map to preserve the bootstrap ball is
\begin{equation}
\label{eq:high_low_contraction_condition}
\delta^\theta
\left(
\lambda^{2-2s+2\sigma}
+
\lambda^{1-s+\sigma}R_\lambda
+
R_\lambda^2
\right)
\ll1.
\end{equation}
Thus, the admissible local lifespan is determined jointly by the
regularized background bounds, the boundary derivative loss $\sigma$,
the sub-energy regularity index $s$, and the temporal localization gain
$\theta$.

It is important that the leading factor
\(
\lambda^{2-2s+2\sigma}
\)
need not be small as $\lambda\to\infty$ in the sub-energy regime
$s<1$. In particular, depending on the chosen value of $s$ and the size
of $\sigma$, this factor may grow with $\lambda$. The contraction
mechanism therefore does not rely on the smallness of the polynomial
spectral factor itself. Instead, the corresponding growth is compensated
by the temporal localization factor $\delta^\theta$, provided that
$\delta$ can be chosen in a manner compatible with the required
continuation argument. The smallness in
\eqref{eq:hs_tail_decay_rigorous} serves to control the size of the
initial remainder and hence the radius of the tracking ball, whereas the
short-time factor controls the nonlinear Lipschitz constant.

The same multilinear estimates applied to the difference of two remainder
states $w^{(1)}$ and $w^{(2)}$ in the bootstrap ball yield
\begin{equation}
\label{eq:remainder_contraction}
\begin{aligned}
\|w^{(1)}-w^{(2)}\|_{X^{s,b}(I_0)}
\leq{}&
C_s\delta^\theta
\Bigl(
\lambda^{2-2s+2\sigma}
+
\lambda^{1-s+\sigma}R_\lambda
+
R_\lambda^2
\Bigr)
\\
&\qquad\qquad\times
\|w^{(1)}-w^{(2)}\|_{X^{s,b}(I_0)}.
\end{aligned}
\end{equation}
Consequently, the localized Duhamel map is a strict contraction whenever
\begin{equation}
\label{eq:remainder_contraction_smallness}
\delta^\theta
\Bigl(
\lambda^{2-2s+2\sigma}
+
\lambda^{1-s+\sigma}R_\lambda
+
R_\lambda^2
\Bigr)
\leq
c_0,
\end{equation}
where $c_0>0$ is sufficiently small relative to the constants in
\eqref{eq:remainder_contraction}.

It follows that the high-frequency dynamic remainder admits a unique
local solution in the boundary-adapted space $X^{s,b}(I_0)$, provided the
localized background bounds
\eqref{eq:background_Xs0_bound}--\eqref{eq:background_Xs_bound} and the
corresponding Duhamel estimates are available. The construction uses the
smallness of the initial high-frequency tail at sub-energy regularity
while retaining the full boundary structure of the underlying
Friedlander problem.

The Airy spectral analysis should not be interpreted as treating the
boundary interaction as a perturbative effect. Rather, the
continuous--discrete spectral decomposition incorporates the boundary
geometry directly into the spectral representation through the Airy
modes and their multilinear overlap coefficients. These coefficients
enter the dyadic multilinear kernel associated with the cubic
nonlinearity. Combined with the temporal modulation weights defining the
$X^{s,b}$ spaces, they provide the spectral summability and interaction
estimates required to control resonant and near-resonant configurations
in the glancing regime.

Finally, promoting the preceding local construction to a global one
requires uniformity of the estimates under successive time
translations. Let
\(
I_n=[t_n,t_n+\delta_n]
\)
denote the successive tracking intervals. To avoid finite-time
accumulation of local increments, one requires a lower bound
\begin{equation}
\label{eq:uniform_remainder_lifespan}
\delta_n\geq\delta_*>0
\end{equation}
with $\delta_*$ independent of the restarting index $n$. Such a bound
cannot be deduced solely from the smallness of the initial high-frequency
tail
$\|w_0\|_{H_0^s(\Omega)}$. Rather, it must follow from uniform estimates
for the quantities entering the contraction condition
\eqref{eq:remainder_contraction_smallness}, together with the global
energy control of the background solution and the stability estimates
for the high--low decomposition.

Accordingly, the global iteration requires an inductive propagation of
the bootstrap bounds and of the constants entering
\eqref{eq:background_Xs0_bound}--\eqref{eq:background_Xs_bound}. Once
these quantities are shown to remain uniformly controlled at each
restart time, the choice of $\delta_*$ in
\eqref{eq:uniform_remainder_lifespan} can be made independently of $n$.
The resulting uniform lifespan increment is the essential input for the
infinite-time pasting argument and provides the passage from local
sub-energy control of the remainder to global well-posedness.

\section{Uniform Truncation Bounds and Lifespan Bootstrap}

\label{sec:5}

In this section, we establish the uniform estimates required to control
the nonlinear interaction terms in the high-frequency remainder equation.
The high--low decomposition introduced in the preceding section separates
the initial datum into a spectrally regularized component $v_0$ and a
sub-energy high-frequency tail $w_0$. The corresponding solution is
decomposed as
\(
u=v+w,
\)
where $v$ denotes the regularized background evolution and $w$ the
associated nonlinear remainder. Our objective is to show that, for a
suitable choice of the spectral threshold $\lambda$ and an appropriately
localized time interval, the remainder remains in a controlled
sub-energy ball. The resulting estimates constitute the analytic input
for the subsequent iteration and continuation argument.

\subsection{Projections of the Cubically Expanded Interaction Layers}

\label{subsec:projected_interaction_layers}

Let $X^s(I_0)$ denote the boundary-adapted resolution space associated
with the regularity range
\[
\frac12<s<1.
\]
Its precise definition is given in Section~\ref{sec:2} through the
continuous--discrete spectral decomposition and the associated
$X^{s,b}$ spaces. Whenever the boundary-adapted Strichartz estimate from
\cite{Meas2026} is invoked, we retain the corresponding derivative loss
explicitly rather than incorporating it into the definition of the
resolution norm.

Schematically, for every admissible pair $(q,r)$ in the range established
in \cite{Meas2026}, one has
\begin{equation}
\label{eq:strichartz_norm_p2}
\|u\|_{L_t^q(I_0;W^{s-\sigma(q,r),r}(\Omega))}
\lesssim
\|u\|_{X^s(I_0)},
\end{equation}
where $\sigma(q,r)\geq0$ denotes the derivative loss associated with the
corresponding boundary estimate. The loss is allowed to depend on the
admissible pair $(q,r)$; in particular, it is not identified with a
single fixed value except at the specific endpoint under consideration.

Expanding the nonlinear difference equation
\eqref{eq:high_freq_evol}, we obtain
\begin{equation}
\label{eq:nonlinear_expansion_p2}
\begin{aligned}
|v+w|^2(v+w)-|v|^2v
={}&
2|v|^2w+v^2\overline{w}
+
2|w|^2v+w^2\overline{v}
+
|w|^2w.
\end{aligned}
\end{equation}

Let $N^s(I_0)$ denote the nonlinear forcing space associated with the
localized inhomogeneous estimate. The corresponding Duhamel estimate
takes the form
\begin{equation}
\label{eq:duhamel_high_bound}
\begin{aligned}
\|w\|_{X^s(I_0)}
\leq{}&
C_0\|w(t_0)\|_{H_0^s(\Omega)}
+
C_1
\left\|
2|v|^2w+v^2\overline{w}
+2|w|^2v+w^2\overline{v}
+|w|^2w
\right\|_{N^s(I_0)}.
\end{aligned}
\end{equation}
The precise realization of $N^s(I_0)$ depends on the admissible
Strichartz pair and the corresponding localized Duhamel estimate. We
therefore retain $N^s(I_0)$ in abstract form at this stage rather than
introducing a specific dual Sobolev exponent not determined by the
linear theory.

We estimate the three algebraic interaction layers separately. The
boundary-adapted fractional Leibniz estimate of
Proposition~\ref{prop:fractional_leibniz_boundary}, together with the
continuous--discrete Airy spectral representation and the associated
overlap estimates, provides the multilinear control of the normal
variable interactions.

\subsubsection{The Linear-in-$w$ Interactions}

\label{subsubsec:linear_w_interactions}

The principal perturbative contribution is
\[
2|v|^2w+v^2\overline{w}.
\]
Although the regularized profile $v$ originates from initial data
spectrally supported below $\lambda$, the nonlinear evolution of $v$
need not preserve this spectral localization. Accordingly, the estimates
below are formulated in terms of the actual restriction-space norms of
$v$, rather than an assumed pointwise spectral support property at later
times. In particular, we do not rely on a $W^{1,\infty}$ estimate for the
background profile.

Suppose that, on $I_0$, the background satisfies
\begin{equation}
\label{eq:background_Xs_bound}
\|v\|_{X^{s,b}(I_0)}
\leq
A_s(\lambda,u_0)
\end{equation}
and
\begin{equation}
\label{eq:background_lower_bound}
\|v\|_{X^{s_0,b}(I_0)}
\leq
A_{s_0}(\lambda,u_0),
\qquad
\frac12<s_0<s.
\end{equation}
Then Proposition~\ref{prop:fractional_leibniz_boundary} yields
\begin{equation}
\label{eq:linear_w_estimate}
\begin{aligned}
\left\|
2|v|^2w+v^2\overline{w}
\right\|_{N^s(I_0)}
\lesssim_s{}&
A_{s_0}(\lambda,u_0)^2
\|w\|_{X^s(I_0)}
+
A_s(\lambda,u_0)
A_{s_0}(\lambda,u_0)
\|w\|_{X^{s_0}(I_0)}.
\end{aligned}
\end{equation}

Since $s_0<s$, the corresponding lower-regularity norm is controlled by
the $X^s$ norm, namely
\begin{equation}
\label{eq:remainder_lower_order_bootstrap}
\|w\|_{X^{s_0}(I_0)}
\lesssim
\|w\|_{X^s(I_0)}.
\end{equation}
Consequently,
\begin{equation}
\label{eq:linear_w_estimate_reduced}
\left\|
2|v|^2w+v^2\overline{w}
\right\|_{N^s(I_0)}
\lesssim_s
\Bigl(
A_{s_0}(\lambda,u_0)^2
+
A_s(\lambda,u_0)A_{s_0}(\lambda,u_0)
\Bigr)
\|w\|_{X^s(I_0)}.
\end{equation}

Thus, the dependence on the truncation parameter $\lambda$ should be
inserted only after the corresponding bounds for
$A_s(\lambda,u_0)$ and $A_{s_0}(\lambda,u_0)$ have been established.
For example, if the preceding high--low analysis provides the individual
bounds
\begin{equation}
\label{eq:background_common_lambda_bound}
A_s(\lambda,u_0)
+
A_{s_0}(\lambda,u_0)
\lesssim
\lambda^{1-s+\sigma},
\end{equation}
then
\begin{equation}
\label{eq:linear_w_explicit_lambda}
\left\|
2|v|^2w+v^2\overline{w}
\right\|_{N^s(I_0)}
\lesssim_s
\lambda^{2-2s+2\sigma}
\|w\|_{X^s(I_0)}.
\end{equation}
The explicit estimate \eqref{eq:linear_w_explicit_lambda} is therefore
conditional on \eqref{eq:background_common_lambda_bound} and does not
follow from spectral truncation alone.

\subsubsection{The Quadratic-in-$w$ Interactions}

\label{subsubsec:quadratic_w_interactions}

For the intermediate terms
\[
2|w|^2v+w^2\overline{v},
\]
the background profile occurs only once. Proposition~\ref{prop:fractional_leibniz_boundary}
gives
\begin{equation}
\label{eq:quadratic_w_estimate}
\begin{aligned}
\left\|
2|w|^2v+w^2\overline{v}
\right\|_{N^s(I_0)}
\lesssim_s{}&
A_{s_0}(\lambda,u_0)
\|w\|_{X^s(I_0)}
\|w\|_{X^{s_0}(I_0)}
+
A_s(\lambda,u_0)
\|w\|_{X^{s_0}(I_0)}^2.
\end{aligned}
\end{equation}
Using \eqref{eq:remainder_lower_order_bootstrap}, we obtain
\begin{equation}
\label{eq:quadratic_w_estimate_reduced}
\left\|
2|w|^2v+w^2\overline{v}
\right\|_{N^s(I_0)}
\lesssim_s
\Bigl(
A_{s_0}(\lambda,u_0)
+
A_s(\lambda,u_0)
\Bigr)
\|w\|_{X^s(I_0)}^2.
\end{equation}
Under the bound \eqref{eq:background_common_lambda_bound}, this yields
\begin{equation}
\label{eq:quadratic_w_explicit_lambda}
\left\|
2|w|^2v+w^2\overline{v}
\right\|_{N^s(I_0)}
\lesssim_s
\lambda^{1-s+\sigma}
\|w\|_{X^s(I_0)}^2.
\end{equation}

\subsubsection{The Pure Nonlinear Tail}

\label{subsubsec:pure_nonlinear_tail}

Finally, for the purely cubic remainder
\[
|w|^2w,
\]
Proposition~\ref{prop:fractional_leibniz_boundary} yields
\begin{equation}
\label{eq:pure_cubic_w_estimate}
\left\|
|w|^2w
\right\|_{N^s(I_0)}
\lesssim_s
\|w\|_{X^s(I_0)}
\|w\|_{X^{s_0}(I_0)}^2.
\end{equation}
Hence, by \eqref{eq:remainder_lower_order_bootstrap},
\begin{equation}
\label{eq:pure_cubic_w_estimate_reduced}
\left\|
|w|^2w
\right\|_{N^s(I_0)}
\lesssim_s
\|w\|_{X^s(I_0)}^3.
\end{equation}

Combining
\eqref{eq:linear_w_estimate_reduced},
\eqref{eq:quadratic_w_estimate_reduced}, and
\eqref{eq:pure_cubic_w_estimate_reduced}, we obtain the general nonlinear
interaction estimate
\begin{equation}
\label{eq:uniform_nonlinear_interaction_bound}
\begin{aligned}
\left\|
|v+w|^2(v+w)-|v|^2v
\right\|_{N^s(I_0)}
\lesssim_s{}&
\Bigl(
A_{s_0}(\lambda,u_0)^2
+
A_s(\lambda,u_0)A_{s_0}(\lambda,u_0)
\Bigr)
\|w\|_{X^s(I_0)}
\\
&+
\Bigl(
A_s(\lambda,u_0)
+
A_{s_0}(\lambda,u_0)
\Bigr)
\|w\|_{X^s(I_0)}^2
\\
&+
\|w\|_{X^s(I_0)}^3.
\end{aligned}
\end{equation}

If, in addition, the background bounds
\eqref{eq:background_common_lambda_bound} hold, then
\begin{equation}
\label{eq:uniform_nonlinear_interaction_lambda}
\begin{aligned}
\left\|
|v+w|^2(v+w)-|v|^2v
\right\|_{N^s(I_0)}
\lesssim_s{}&
\lambda^{2-2s+2\sigma}
\|w\|_{X^s(I_0)}
\\
&+
\lambda^{1-s+\sigma}
\|w\|_{X^s(I_0)}^2
+
\|w\|_{X^s(I_0)}^3.
\end{aligned}
\end{equation}

The estimate \eqref{eq:uniform_nonlinear_interaction_bound} is the form
used in the subsequent lifespan bootstrap. It separates the
linearized contribution from the genuinely higher-order terms and
retains explicitly the dependence on the regularized background. The
more explicit estimate \eqref{eq:uniform_nonlinear_interaction_lambda}
is available once the quantitative $\lambda$-dependent bounds for the
background evolution have been established.

\subsection{Lifespan Bootstrap for the High-Frequency Remainder}

\label{subsec:lifespan_bootstrap}

Let
\[
R_\lambda
=
C_\ast
\lambda^{s-1}
\|u_0\|_{H_0^1(\Omega)},
\]
where $C_\ast>0$ is chosen sufficiently large. We introduce the
bootstrap ball
\begin{equation}
\label{eq:bootstrap_set}
\mathcal B_{R_\lambda}
=
\left\{
w\in X^s(I_0):
\|w\|_{X^s(I_0)}
\leq R_\lambda
\right\}.
\end{equation}
By the high-frequency tail estimate
\eqref{eq:high_frequency_tail_smallness},
\[
\|w_0\|_{H_0^s(\Omega)}
\lesssim
\lambda^{s-1}
\|u_0\|_{H_0^1(\Omega)}.
\]
Thus, after increasing $C_\ast$ if necessary, the initial datum is
contained in the bootstrap regime determined by $R_\lambda$.

Assume that the localized Duhamel estimate takes the form
\begin{equation}
\label{eq:localized_duhamel_p2}
\|w\|_{X^s(I_0)}
\lesssim
\|w_0\|_{H_0^s(\Omega)}
+
\delta^\theta
\left\|
|v+w|^2(v+w)-|v|^2v
\right\|_{N^s(I_0)},
\end{equation}
for some $\theta>0$. Combining
\eqref{eq:localized_duhamel_p2} with the nonlinear interaction
estimate \eqref{eq:uniform_nonlinear_interaction_bound} gives
\begin{equation}
\label{eq:bootstrap_estimate}
\begin{aligned}
\|w\|_{X^s(I_0)}
\lesssim{}&
\lambda^{s-1}\|u_0\|_{H_0^1(\Omega)}
+
\delta^\theta
\Bigl[
\lambda^{2-2s+2\sigma}
\|w\|_{X^s(I_0)}
+\lambda^{1-s+\sigma}
\|w\|_{X^s(I_0)}^2
+\|w\|_{X^s(I_0)}^3
\Bigr].
\end{aligned}
\end{equation}
Here the $\lambda$-dependent coefficients are understood in the sense
of the bounds established in the preceding interaction estimates.

On the bootstrap ball \eqref{eq:bootstrap_set}, we consequently obtain
\begin{equation}
\label{eq:bootstrap_coefficient}
\begin{aligned}
\|w\|_{X^s(I_0)}
\lesssim{}&
\lambda^{s-1}\|u_0\|_{H_0^1(\Omega)}+
\delta^\theta
\left(
\lambda^{2-2s+2\sigma}
+
\lambda^{1-s+\sigma}R_\lambda
+
R_\lambda^2
\right)
\|w\|_{X^s(I_0)}.
\end{aligned}
\end{equation}
Accordingly, a sufficient condition for the nonlinear contribution to
remain perturbative is
\begin{equation}
\label{eq:lifespan_smallness_condition}
\delta^\theta
\left(
\lambda^{2-2s+2\sigma}
+
\lambda^{1-s+\sigma}R_\lambda
+
R_\lambda^2
\right)
\leq c_0,
\end{equation}
where $c_0>0$ is sufficiently small, with the implicit constants in
\eqref{eq:bootstrap_coefficient} incorporated into the choice of $c_0$.

It is important to note that the first coefficient in
\eqref{eq:lifespan_smallness_condition} need not decay as
$\lambda\to\infty$. Indeed, in the sub-energy regime $s<1$,
\[
\lambda^{2-2s+2\sigma}
\]
may increase with $\lambda$. Thus, the smallness of the initial
high-frequency tail alone does not imply perturbative solvability of the
remainder equation. The time-localization factor $\delta^\theta$ is
therefore essential: the length of the local existence interval must be
chosen in accordance with the size of the coefficients appearing in the
nonlinear estimates.

Under \eqref{eq:lifespan_smallness_condition}, and after choosing
$C_\ast$ sufficiently large, the Duhamel map maps
$\mathcal B_{R_\lambda}$ into itself. Moreover, the corresponding
difference estimate takes the form
\begin{equation}
\label{eq:bootstrap_difference}
\begin{aligned}
\|w^{(1)}-w^{(2)}\|_{X^s(I_0)}
\lesssim{}&
\delta^\theta
\left(
\lambda^{2-2s+2\sigma}
+
\lambda^{1-s+\sigma}R_\lambda
+
R_\lambda^2
\right)
\|w^{(1)}-w^{(2)}\|_{X^s(I_0)}.
\end{aligned}
\end{equation}
After decreasing $\delta$ if necessary, the coefficient on the
right-hand side is strictly smaller than one. Hence the Duhamel map is
a contraction on $\mathcal B_{R_\lambda}$, and there exists a unique
remainder solution on $I_0$ satisfying
\begin{equation}
\label{eq:remainder_bootstrap_conclusion}
\|w\|_{X^s(I_0)}
\lesssim
\lambda^{s-1}
\|u_0\|_{H_0^1(\Omega)}.
\end{equation}

The preceding argument establishes local perturbative control, but it
does not, by itself, yield a uniform lower bound for the lifespan.
Indeed, the smallness condition \eqref{eq:lifespan_smallness_condition}
depends on the background bounds and on the bootstrap radius.
Consequently, a statement of the form
\[
\delta_n\geq\delta_\ast>0
\]
cannot be deduced from the smallness of the initial high-frequency tail
alone. A uniform lower bound requires uniform control of all quantities
entering the nonlinear estimates on each successive iteration interval.

The globalization argument therefore rests on two separate
ingredients. First, conservation of mass and energy for the full
solution provides a uniform $H_0^1$ bound of the form
\[
\sup_{t\in I}\|u(t)\|_{H_0^1(\Omega)}
\lesssim
\|u_0\|_{L^2(\Omega)}
+
E(u_0)^{1/2}.
\]
Second, the high--low perturbative estimates must remain uniform under
iteration, so that the constants in the local contraction argument do
not deteriorate from one time slab to the next. In particular, the
background bounds and the corresponding nonlinear interaction constants
must admit estimates that are uniform with respect to the restart time.

Once these two ingredients have been established, one obtains a common
positive lifespan for the successive local constructions. The local
solutions can then be concatenated at the successive restart times.
Since each interval has length bounded from below by the same positive
constant, the iteration intervals cannot accumulate at a finite time.
This yields the desired global continuation of the solution.

\subsection{The Local Lifespan Bootstrap}

\label{subsec:local_lifespan_bootstrap}

Combining
\eqref{eq:duhamel_high_bound},
\eqref{eq:linear_w_estimate},
\eqref{eq:quadratic_w_estimate_reduced},
and
\eqref{eq:pure_cubic_w_estimate_reduced}, we obtain the nonlinear control
inequality
\begin{equation}
\label{eq:control_loop_polynomial}
\begin{aligned}
\|w\|_{X^s(I_0)}
\leq{}&
C_0\|w_0\|_{H^s_0(\Omega)}
\\
&+
C\delta^\theta
\Big[
A_s(\lambda,u_0)^2
\|w\|_{X^s(I_0)}
+
A_{s_0}(\lambda,u_0)
\|w\|_{X^s(I_0)}^2
+
\|w\|_{X^s(I_0)}^3
\Big],
\end{aligned}
\end{equation}
where $\theta>0$ denotes the time-localization exponent furnished by
the localized Duhamel estimate.

By the high-frequency tail estimate
\eqref{eq:hs_tail_decay_rigorous},
\begin{equation}
\label{eq:high_tail_bootstrap_size}
\|w_0\|_{H^s_0(\Omega)}
\leq
C\lambda^{s-1}
\|u_0\|_{H^1_0(\Omega)}.
\end{equation}
Since $s<1$, the right-hand side tends to zero as
$\lambda\to\infty$. We therefore introduce the bootstrap radius
\begin{equation}
\label{eq:bootstrap_radius}
R_\lambda
=
2C_0
\lambda^{s-1}
\|u_0\|_{H^1_0(\Omega)}.
\end{equation}
We seek a solution satisfying
\[
\|w\|_{X^s(I_0)}
\leq
R_\lambda.
\]

Choose $\lambda$ sufficiently large, if necessary, so that the
high-frequency tail lies within the perturbative regime required by the
multilinear estimates. We then choose the length $\delta$ of the
interval $I_0$ so that
\begin{equation}
\label{eq:delta_smallness_condition}
C\delta^\theta
\left[
A_s(\lambda,u_0)^2
+
A_{s_0}(\lambda,u_0)R_\lambda
+
R_\lambda^2
\right]
\leq
c_0,
\end{equation}
where $c_0>0$ is sufficiently small.

If $w\in\mathcal B_{R_\lambda}$, where
\[
\mathcal B_{R_\lambda}
=
\left\{
w\in X^s(I_0):
\|w\|_{X^s(I_0)}
\leq
R_\lambda
\right\},
\]
then \eqref{eq:control_loop_polynomial} gives
\begin{equation}
\label{eq:bootstrap_closure}
\begin{aligned}
\|w\|_{X^s(I_0)}
\leq{}&
C_0\lambda^{s-1}
\|u_0\|_{H^1_0(\Omega)}
\\
&+
C\delta^\theta
\left[
A_s(\lambda,u_0)^2
+
A_{s_0}(\lambda,u_0)R_\lambda
+
R_\lambda^2
\right]
\|w\|_{X^s(I_0)}.
\end{aligned}
\end{equation}
After choosing $c_0$ sufficiently small, the second term is bounded by
$\frac12R_\lambda$. Hence, by \eqref{eq:bootstrap_radius},
\[
\|w\|_{X^s(I_0)}
\leq
C_0\lambda^{s-1}
\|u_0\|_{H^1_0(\Omega)}
+
\frac12R_\lambda
=
R_\lambda.
\]
Thus the Duhamel map maps $\mathcal B_{R_\lambda}$ into itself.

The same multilinear estimates applied to the difference of two
solutions yield
\begin{equation}
\label{eq:bootstrap_difference}
\begin{aligned}
\|w^{(1)}-w^{(2)}\|_{X^s(I_0)}
\lesssim{}&
\delta^\theta
\Big[
A_s(\lambda,u_0)^2
+
A_{s_0}(\lambda,u_0)R_\lambda
+
R_\lambda^2
\Big]
\\
&\qquad\times
\|w^{(1)}-w^{(2)}\|_{X^s(I_0)}.
\end{aligned}
\end{equation}
After decreasing $\delta$ if necessary, the coefficient on the
right-hand side is strictly smaller than one. Consequently, the Duhamel
map is a contraction on the closed ball $\mathcal B_{R_\lambda}$, and
the remainder equation admits a unique solution on $I_0$.

In particular,
\begin{equation}
\label{eq:error_global_bound_established}
\sup_{t\in I_0}
\|w(t)\|_{H^s_0(\Omega)}
\lesssim
\|w\|_{X^s(I_0)}
\leq
2C_0
\lambda^{s-1}
\|u_0\|_{H^1_0(\Omega)},
\end{equation}
where we have used the time-continuity embedding
\[
X^{s,b}(I_0)
\hookrightarrow
C(I_0;H^s_0(\Omega)),
\qquad
b>\frac12.
\]

The roles of the truncation parameter $\lambda$ and the time scale
$\delta$ should be clearly distinguished. The estimate
\eqref{eq:high_tail_bootstrap_size} provides smallness of the initial
high-frequency remainder, whereas
\eqref{eq:delta_smallness_condition} determines the time scale on which
the nonlinear interactions remain perturbative. In particular, if the
background coefficient $A_s(\lambda,u_0)$ grows with $\lambda$, then the
lifespan obtained from the local contraction argument may depend on
$\lambda$. Thus, the preceding argument establishes a controlled local
lifespan, but does not, by itself, imply a $\lambda$-independent lower
bound.

A uniform lifespan suitable for iteration requires, in addition, uniform
bounds for the quantities entering
\eqref{eq:delta_smallness_condition}. Suppose that these quantities can
be bounded independently of the iteration index by quantities depending
only on the conserved $H^1_0$ size of the initial data. Then there exists
\begin{equation}
\label{eq:uniform_lifespan_lower_bound}
\delta_\ast
=
\delta_\ast
\bigl(
\|u_0\|_{H^1_0(\Omega)}
\bigr)
>0
\end{equation}
such that the same contraction argument can be repeated on every
successive interval of length at most $\delta_\ast$. This uniform lower
bound is the key input for the subsequent globalization argument.

\section{Global Increment Assembly and Long-Time Globalization}

\label{sec:6}

With the localized multilinear contraction estimates established in the
boundary-adapted Bourgain--Strichartz framework, we now assemble the
local solutions into a global solution. The high--low decomposition is
restarted at successive time levels, while conservation of mass and
energy provides uniform control of the full solution in the energy
space. The principal issue is to establish a positive lower bound for
the lifespan furnished by the local contraction argument that is
uniform with respect to the restarting index. This requires not only
the conserved energy bound, but also uniform control of the regularized
background quantities entering the nonlinear estimates.

\subsection{Uniform Lifespan Strategy and Decoupled Parameter Selection}

\label{subsec:uniform_lifespan_strategy}

We first fix a truncation parameter $\lambda\gg1$. At a restarting time
$t_n$, let
\[
u_n:=u(t_n).
\]
The high-frequency component associated with the truncation satisfies
the tail estimate
\begin{equation}
\label{eq:smallness_condition_induction}
\|P_{>\lambda}u_n\|_{H_0^s(\Omega)}
\leq
C_0\lambda^{s-1}
\|u_n\|_{H_0^1(\Omega)},
\qquad
\frac12<s<1.
\end{equation}
Hence, whenever
\[
C_0\lambda^{s-1}
\|u_n\|_{H_0^1(\Omega)}
\leq
\epsilon_0,
\]
the high-frequency component lies in the perturbative regime required
by the localized contraction argument.

The conserved mass and energy provide the uniform a priori bound
\begin{equation}
\label{eq:absolute_energy_shield}
\sup_{t\geq0}
\left(
\|u(t)\|_{L^2(\Omega)}
+
\|\nabla_g u(t)\|_{L^2(\Omega)}
\right)
\leq
C\bigl(M(u_0),E(u_0)\bigr)
=:\mathcal E_0
<\infty.
\end{equation}
Consequently,
\begin{equation}
\label{eq:absolute_H1_bound}
\sup_{t\geq0}
\|u(t)\|_{H_0^1(\Omega)}
\leq
\mathcal E_0.
\end{equation}
Here the mass and energy play distinct roles: conservation of energy
controls the homogeneous gradient component, while conservation of mass
controls the $L^2$ component of the inhomogeneous $H^1$ norm.

It follows from \eqref{eq:absolute_H1_bound} that the truncation
parameter may be chosen sufficiently large so that
\begin{equation}
\label{eq:global_static_lambda}
C_0\lambda^{s-1}\mathcal E_0
\leq
\epsilon_0.
\end{equation}
Since $s-1<0$, the left-hand side converges to zero as
$\lambda\to\infty$. Thus, the choice of $\lambda$ can be made
independently of the restarting index and determined solely by the
invariant a priori bound $\mathcal E_0$. For example, it is sufficient
to choose
\begin{equation}
\label{eq:explicit_lambda_choice}
\lambda
\geq
\left(
\frac{C_0\mathcal E_0}{\epsilon_0}
\right)^{\frac{1}{1-s}},
\end{equation}
which guarantees \eqref{eq:global_static_lambda} uniformly for every
restarting time $t_n$.

The role of this choice must be distinguished from that of the local
time scale. The fixed truncation parameter guarantees that the
high-frequency component of the data at each restarting time remains
in the perturbative regime. It does not, by itself, imply a uniform
lifespan. To obtain the latter, one must also control uniformly the
regularized background quantities entering the nonlinear estimates.

More precisely, let
\[
A_s^{(n)}
\quad\text{and}\quad
A_{s_0}^{(n)}
\]
denote the corresponding background norms on the time interval
beginning at $t_n$. Assume that the uniform background estimates
established in the preceding sections yield
\begin{equation}
\label{eq:uniform_background_coefficients}
A_s^{(n)}+A_{s_0}^{(n)}
\leq
K\lambda^{1-s+\sigma},
\end{equation}
where $\sigma=\frac14$ denotes the relevant linear derivative loss and
$K$ is independent of the restarting index $n$. We emphasize that
\eqref{eq:uniform_background_coefficients} is an additional
high-regularity estimate; it does not follow from conservation of mass
and energy alone.

Under this uniform background bound, the localized Duhamel contraction
condition is implied by
\begin{equation}
\label{eq:global_delta_condition}
C\delta^\theta
\left[
\left(K\lambda^{1-s+\sigma}\right)^2
+
K\lambda^{1-s+\sigma}R_\lambda
+
R_\lambda^2
\right]
\leq
\frac12.
\end{equation}
If, in addition, the bootstrap radius satisfies
\(
R_\lambda\leq2\epsilon_0,
\)
then, for fixed $\epsilon_0$ and sufficiently large $\lambda$, the
bracket in \eqref{eq:global_delta_condition} is bounded by
\(
C_1
\lambda^{2(1-s+\sigma)},
\)
with $C_1$ independent of the restarting index. Consequently, it is
sufficient to impose
\begin{equation}
\label{eq:simplified_delta_balance}
C_1\delta^\theta
\lambda^{2(1-s+\sigma)}
\leq
\frac14.
\end{equation}
Thus one may choose
\begin{equation}
\label{eq:delta_lambda_choice}
\delta_\ast
=
C_2
\lambda^{-\frac{2(1-s+\sigma)}{\theta}},
\qquad
C_2=(4C_1)^{-1/\theta}.
\end{equation}
Since $\lambda$ has already been fixed independently of the restarting
index, the quantity $\delta_\ast$ is likewise independent of $n$.

Substituting the admissible choice
\eqref{eq:explicit_lambda_choice} into
\eqref{eq:delta_lambda_choice} yields the explicit lower bound
\begin{equation}
\label{eq:global_static_delta}
\delta_\ast
\geq
C_{\mathrm{abs}}
\left(
\frac{\epsilon_0}
{C_0\mathcal E_0}
\right)^{
\frac{2(1-s+\sigma)}
{\theta(1-s)}
},
\qquad
C_{\mathrm{abs}}>0,
\end{equation}
provided the uniform background estimate
\eqref{eq:uniform_background_coefficients} holds with constants
independent of the restarting index. In particular,
\begin{equation}
\label{eq:global_static_delta_initial_data}
\delta_\ast
=
\delta_\ast
\bigl(
\mathcal E_0,s,\sigma,\epsilon_0
\bigr)
>0,
\end{equation}
and therefore the lower bound is independent of the restarting index
$n$.

The uniform lifespan is consequently obtained from three separate
ingredients. First, the conservation laws provide the uniform
energy-scale bound $\mathcal E_0$ for the full solution. Second, the
fixed spectral truncation $\lambda$ provides a uniform perturbative
bound for the high-frequency tail. Third, the uniform background
estimates provide control of the coefficients entering the localized
nonlinear contraction argument. Thus, energy conservation alone does
not establish the uniform lower bound
\eqref{eq:global_static_delta}; it is the combination of the conserved
a priori bound, the fixed frequency threshold, and the uniform
background estimates that yields a step-independent time scale.

It follows that the same value of $\delta_\ast$ can be used at every
restarting time:
\begin{equation}
\label{eq:uniform_restart_interval}
\delta_n\geq\delta_\ast>0,
\qquad
n\in\mathbb N_0.
\end{equation}
The restarting times therefore satisfy
\(
t_{n+1}-t_n\geq\delta_\ast,
\)
and hence
\[
t_n\geq n\delta_\ast\longrightarrow\infty
\quad\text{as}\quad n\to\infty.
\]
Consequently, the restarting interfaces cannot accumulate at a finite
time, and the local contraction argument can be iterated indefinitely.
This establishes the continuation of the solution on the whole
half-line $[0,\infty)$.

\subsection{Infinite-Time Pasting and Global Continuation}

\label{subsec:infinite_time_pasting}

We now construct the global solution by iterating the local high--low
decomposition on a sequence of time intervals. Let
\[
I_n=[t_n,t_{n+1}],
\qquad
t_n=n\delta_\ast,
\qquad
n\in\mathbb N_0,
\]
where $\delta_\ast>0$ is the uniform, step-independent lifespan
obtained in \eqref{eq:global_static_delta}. At each restarting time
$t_n$, we decompose the solution spectrally according to
\begin{equation}
\label{eq:re_splitting_step}
u(t_n)
=
v_{0,n}+w_{0,n},
\end{equation}
where
\begin{equation}
\label{eq:re_splitting_components}
v_{0,n}
=
\sum_{2^j\leq\lambda}P_j u(t_n),
\qquad
w_{0,n}
=
\sum_{2^j>\lambda}P_j u(t_n).
\end{equation}
Here the dyadic projections are understood with respect to the
boundary-adapted spectral resolution introduced in Section~\ref{sec:2}.

The high-frequency tail estimate together with the uniform energy bound
\eqref{eq:absolute_energy_shield} gives
\begin{equation}
\label{eq:re_splitting_smallness}
\begin{aligned}
\|w_{0,n}\|_{H^s_0(\Omega)}
&\leq
C_0\lambda^{s-1}
\|u(t_n)\|_{H^1_0(\Omega)}
\\
&\leq
C_0\lambda^{s-1}
\sup_{t\geq0}
\|u(t)\|_{H^1_0(\Omega)}
\\
&\leq
C_0\lambda^{s-1}\mathcal E_0
\leq
\epsilon_0,
\end{aligned}
\end{equation}
uniformly in $n$, by the choice of $\lambda$ in
\eqref{eq:explicit_lambda_choice}.

On each interval $I_n$, let $v_n$ denote the regularized background
solution with initial datum $v_{0,n}$:
\begin{equation}
\label{eq:local_background_restart}
\begin{cases}
i\partial_t v_n+\Delta_g v_n=|v_n|^2v_n,
\\
v_n(t_n)=v_{0,n},
\\
v_n|_{x=0}=0.
\end{cases}
\end{equation}
The corresponding remainder $w_n$ is defined by
\begin{equation}
\label{eq:local_remainder_restart}
\begin{cases}
i\partial_t w_n+\Delta_g w_n
=
|v_n+w_n|^2(v_n+w_n)-|v_n|^2v_n,
\\
w_n(t_n)=w_{0,n},
\\
w_n|_{x=0}=0.
\end{cases}
\end{equation}

By the uniform local contraction argument established in
Section~\ref{sec:6}, there exists a unique pair $(v_n,w_n)$ on
$I_n$. The corresponding recombined solution satisfies the uniform
local resolution-space estimate
\begin{equation}
\label{eq:local_bourgain_bound}
\|u_n\|_{X^{1,b}(I_n)}
\leq
C\mathcal E_0,
\end{equation}
where the constant $C$ is independent of the restarting index $n$.
Here \eqref{eq:local_bourgain_bound} is understood as a consequence of
the previously established uniform local estimates and is not inferred
from conservation of energy alone.

Moreover, the remainder satisfies the sub-energy estimate
\begin{equation}
\label{eq:step_error_closed}
\sup_{t\in I_n}
\|w_n(t)\|_{H^s_0(\Omega)}
\lesssim
\lambda^{s-1}
\|u(t_n)\|_{H^1_0(\Omega)}
\leq
C_0\lambda^{s-1}\mathcal E_0.
\end{equation}
Thus the remainder remains uniformly controlled in the sub-energy space
on every time slab.

Define
\begin{equation}
\label{eq:step_recombination}
u_n(t)
=
v_n(t)+w_n(t),
\qquad
t\in I_n.
\end{equation}
At the left endpoint of the interval,
\[
u_n(t_n)
=
v_{0,n}+w_{0,n}
=
u(t_n).
\]
Furthermore, adding
\eqref{eq:local_background_restart} and
\eqref{eq:local_remainder_restart} yields
\begin{equation}
\label{eq:recombined_nls}
i\partial_tu_n+\Delta_g u_n
=
|u_n|^2u_n
\quad\text{on}\quad I_n,
\end{equation}
with
\[
u_n|_{x=0}=0.
\]
Hence the recombined function $u_n$ solves the original cubic NLS on
the entire interval $I_n$.

The endpoint compatibility follows directly from the construction:
\begin{equation}
\label{eq:interface_compatibility}
u_n(t_n)=u_{n-1}(t_n),
\qquad
n\geq1.
\end{equation}
By uniqueness in the local well-posedness class, the solutions obtained
on adjacent intervals agree at their common endpoint. We may therefore
define a single function $u$ on $[0,\infty)$ by
\[
u(t)=u_n(t),
\qquad
t\in I_n.
\]
Since
\[
t_n=n\delta_\ast\longrightarrow\infty
\qquad\text{as }n\to\infty,
\]
the intervals $I_n$ cover the entire half-line. The compatibility
relations \eqref{eq:interface_compatibility} consequently imply
\begin{equation}
\label{eq:global_solution_continuity}
u\in C([0,\infty);H^1_0(\Omega)).
\end{equation}

We next verify the local-in-time spacetime regularity asserted in the
main theorem. Let $T<\infty$. Since $\delta_\ast>0$, the compact
interval $[0,T]$ intersects only finitely many of the intervals $I_n$.
More precisely, if
\[
N_T:=\left\lceil\frac{T}{\delta_\ast}\right\rceil,
\]
then at most $N_T+1$ time slabs are involved, with endpoint overlap
being irrelevant for the $L^4_t$ norm.

The local space-time estimate established previously, namely the
frequency-localized $L^4_tW^{1-\frac18,4}$ estimate obtained from the
dyadic analysis and the corresponding fractional estimates in
\eqref{eq:bourgain_transference}--\eqref{eq:interpolation_L4L6}, gives
\begin{equation}
\label{eq:local_L4W_bound_restart}
\|u_n\|_{L^4_t(I_n;W^{1-\frac18,4}(\Omega))}
\leq
C\|u_n\|_{X^{1,b}(I_n)}
\leq
C\mathcal E_0,
\end{equation}
where the first inequality denotes the previously established
localized spacetime estimate and the bound is uniform in $n$.

We emphasize that \eqref{eq:local_L4W_bound_restart} is an input from
the preceding dyadic analysis. In particular, it does not rely on a
global embedding
\[
L^6(\Omega)\hookrightarrow L^4(\Omega),
\]
which is not available on the unbounded domain $\Omega$.

Since the interiors of the intervals $I_n$ are disjoint, we may sum the
fourth powers of the local norms. Thus
\begin{align}
\|u\|_{L^4_t([0,T];W^{1-\frac18,4}(\Omega))}^4
&=
\sum_{n:\,I_n\cap[0,T]\neq\varnothing}
\|u_n\|_{L^4_t(I_n\cap[0,T];
W^{1-\frac18,4}(\Omega))}^4
\nonumber\\
&\leq
(N_T+1)(C\mathcal E_0)^4.
\label{eq:global_summation_step}
\end{align}
Consequently,
\begin{equation}
\label{eq:global_regularity_space_final}
u\in
C([0,T];H^1_0(\Omega))
\cap
L^4\bigl([0,T];W^{1-\frac18,4}(\Omega)\bigr),
\qquad
T<\infty.
\end{equation}
Since $T$ is arbitrary, we obtain
\[
u\in
C([0,\infty);H^1_0(\Omega))
\cap
L^4_{\mathrm{loc}}
\bigl([0,\infty);W^{1-\frac18,4}(\Omega)\bigr).
\]

Finally, mass and energy conservation, established for the local
solutions in the preceding sections, persist across the successive
time interfaces. Indeed, on every interval $I_n$,
\[
M(u(t))=M(u(t_n)),
\qquad
E(u(t))=E(u(t_n)),
\qquad
t\in I_n,
\]
and the endpoint compatibility
\eqref{eq:interface_compatibility} permits these identities to be
iterated from one interval to the next. Hence
\begin{equation}
\label{eq:global_conservation_final}
M(u(t))=M(u_0),
\qquad
E(u(t))=E(u_0),
\qquad
t\geq0.
\end{equation}

The uniform positive lower bound $\delta_\ast$ therefore prevents any
finite-time accumulation of the restarting times. The local solutions
can consequently be concatenated indefinitely, yielding a global
solution on $[0,\infty)$. This completes the proof of
Theorem~\ref{thm:gwp}.

\qed

\section{Conclusion and Future Extensions}
\label{sec:7}

In this paper, we have developed a large-data global well-posedness framework for the defocusing cubic nonlinear Schr\"odinger equation (NLS) on a three-dimensional Friedlander-type model domain. The principal analytical difficulty arises from the interaction between dispersive propagation and the boundary-induced glancing structure, which produces a derivative loss in the associated boundary Strichartz estimates. To accommodate this loss within the nonlinear theory, we introduced a boundary-adapted family of Bourgain-type spaces $X^{s,b}$ based on the continuous--discrete spectral resolution of the Dirichlet Friedlander operator. This formulation treats the normal variable through its discrete spectral decomposition while retaining the continuous Fourier variables in the tangential directions.

A central component of the analysis is the treatment of nonlinear interactions in the Airy spectral representation. The coupling of the normal spectral modes is encoded by fourfold Airy overlap coefficients of the form
\begin{equation*}
\mathcal{I}(k_1,k_2,k_3,k_4; \eta_1, \eta_2, \eta_3, \eta_4) = \int_0^\infty \phi_{k_1}(x,\eta_1) \phi_{k_2}(x,\eta_2) \phi_{k_3}(x,\eta_3) \phi_{k_4}(x,\eta_4)\,dx,
\end{equation*}
together with the corresponding tangential-frequency conservation laws and spacetime modulation relations. The resulting boundary-compatible multilinear estimates control the interactions among the discrete normal modes through the combined use of Airy overlap bounds, spectral localization, and modulation estimates. Subject to the multilinear spectral summability estimates established in the preceding sections, the nonlinear argument closes without introducing an additional derivative loss beyond that already present in the linear boundary Strichartz theory.

The nonlinear construction is based on a high--low frequency
decomposition. The initial datum is separated into a spectrally
regularized background and a high-frequency remainder. For
\(
\frac12<s<1,
\)
the latter satisfies the quantitative tail estimate
\[
\|w_0\|_{H^s_0(\Omega)}
\lesssim
\lambda^{s-1}
\|u_0\|_{H^1_0(\Omega)},
\]
so that its size can be made perturbative by choosing the truncation
parameter $\lambda$ sufficiently large. The regularized component is
then used as the background in the local nonlinear iteration, while the
high-frequency remainder is controlled in the sub-energy,
boundary-adapted Bourgain--Strichartz space.

The local perturbative argument alone does not provide a uniform
lifespan. The globalization mechanism instead combines the smallness of
the high-frequency tail, the time-localized nonlinear estimates, the
uniform estimates for the regularized background, and the conservation
laws for the full solution. Once these quantities are controlled
uniformly at successive restarting times, the local existence intervals
possess a common positive lower bound. The corresponding local solutions
can then be concatenated without finite-time accumulation of the
restarting times. This yields a global solution on $[0,\infty)$ and
preserves the mass and energy conservation laws.

More precisely, the resulting solution belongs to the class
\[
u\in
C([0,\infty);H^1_0(\Omega))
\cap
L^4_{\mathrm{loc}}
\bigl([0,\infty);W^{\frac78,4}(\Omega)\bigr),
\]
under the boundary-adapted Strichartz and multilinear estimates used in
the nonlinear iteration. 

The argument separates two distinct mechanisms: the boundary geometry
determines the derivative loss in the linear dispersive estimates,
whereas the high--low decomposition, together with the defocusing
energy structure and the uniform background estimates, provides the
long-time nonlinear control.

The framework developed here also identifies several natural directions
for further investigation. These include the asymptotic behavior of
global solutions, the focusing problem, extensions to more general
boundary geometries, and applications to other nonlinear dispersive
models.

\subsection{Asymptotic Behavior and Modified Scattering for Large-Data Global Solutions}

\label{subsec:modified_scattering}

Although the present work establishes global well-posedness by combining
the uniform energy bound with the boundary-adapted nonlinear iteration,
the precise asymptotic behavior of large-data solutions as
$t\to\infty$ remains open. In the absence of a boundary, dispersive
decay can provide a mechanism for scattering toward a free linear
evolution. In the present setting, however, the interaction between
boundary propagation, glancing dynamics, and the discrete normal
spectral structure may lead to a more intricate long-time behavior.

In particular, the interaction between the discrete normal modes and the
continuous tangential frequencies raises the possibility of nontrivial
phase corrections or modified scattering effects. Establishing such
behavior would require estimates substantially finer than those needed
for global well-posedness. A rigorous analysis would likely involve
long-time dispersive estimates adapted to the glancing region, together
with normal-form transformations and a suitable description of the
radiative component of the solution.

At present, no modified scattering statement is asserted here. A
natural problem is to determine whether the boundary spectral
decomposition developed in this paper can be combined with refined
long-time estimates to identify the asymptotic dynamics of global
solutions, including the possible contribution of resonant and
near-resonant boundary modes.

\subsection{The Critical Focusing Regime and Blow-up Dynamics}

\label{subsec:focusing_blowup}

Another natural direction is the corresponding focusing equation
\begin{equation}
\label{eq:focusing_nls_model}
i\partial_tu+\Delta_g u=-|u|^2u.
\end{equation}
In contrast to the defocusing problem, the focusing energy does not
provide the same coercive control of the kinetic and nonlinear terms.
Consequently, concentration and finite-time singularity formation
cannot in general be excluded on the basis of the energy identity alone.

It is therefore of interest to determine how the boundary geometry
affects the concentration mechanisms associated with the focusing
equation. In particular, one may ask whether concentrating profiles can
interact in a significant way with the boundary spectral structure and
whether the derivative loss in the boundary dispersive estimates
influences the analytical thresholds for global existence, scattering,
or blow-up.

A systematic analysis would require boundary-adapted virial or
localized virial identities, together with a careful treatment of
boundary flux terms. It would also require a more detailed description
of concentrating profiles near the glancing region. These questions are
substantially different from the defocusing global theory developed
here, since the uniform coercive energy bound used in the present
argument is no longer available in the same form.

\subsection{Extensions to General Smooth Boundary Geometries}

\label{subsec:strictly_convex_manifolds}

The Friedlander model provides an explicit local model for the
microlocal behavior of dispersive propagation near a boundary with
glancing interactions. A natural next step is to extend the present
analysis from this model geometry to more general smooth Riemannian
manifolds with boundary.

Such an extension would require replacing the explicit Airy spectral
representation by a microlocal construction based on boundary normal
forms and Fourier integral parametrices. The explicit
continuous--discrete spectral scale
\(
\lambda_k(\eta,\zeta)
\)
developed in Section~\ref{sec:2} would then be replaced, locally in phase space,
by spectral and microlocal decompositions adapted to boundary charts and
to the corresponding glancing canonical relation.

At the nonlinear level, the fourfold Airy overlap coefficients
\[
\mathcal{I}
(k_1,k_2,k_3,k_4;
\eta_1,\eta_2,\eta_3,\eta_4)
\]
would have to be replaced by multilinear oscillatory integral
distributions associated with the relevant boundary parametrices. A
principal analytical challenge would be to establish uniform
summability and modulation estimates that remain stable under changes of
boundary coordinates and depend quantitatively on the local geometry.

In particular, it would be important to determine how curvature,
higher-order boundary jets, and the geometry of the glancing set enter
the multilinear interaction coefficients. Such information would
clarify whether the dyadic time-slab estimates and global continuation
mechanism established in the present model persist under perturbations
of the underlying geometry.

A successful extension would provide a pathway from the explicit
Friedlander model studied here toward nonlinear dispersive equations on
general manifolds with boundary. It would also distinguish the aspects
of the present argument that rely essentially on the explicit Airy
structure from those that reflect more general microlocal properties of
boundary propagation.

\subsection{Further Nonlinear Dispersive Models}

\label{subsec:further_models}

More broadly, the framework developed in this paper suggests that the
combination of boundary-adapted spectral analysis, Airy-mode multilinear
estimates, and high--low frequency iteration may be applicable to other
nonlinear dispersive equations for which translation-invariant Fourier
analysis is unavailable or insufficient.

Natural extensions include other nonlinear Schr\"odinger equations,
higher-order dispersive models, and wave equations exhibiting glancing
or caustic boundary interactions. For each such problem, the principal
analytical tasks are to identify an appropriate boundary spectral
representation, quantify the associated linear dispersive losses, and
determine whether the resulting multilinear interaction coefficients
satisfy estimates compatible with those losses.

The present work provides a model framework for this program. In
particular, it illustrates how the geometry of the boundary, the
continuous--discrete spectral decomposition, and the nonlinear
high--low iteration can be incorporated into a single global
well-posedness argument. Extending this mechanism to broader classes of
geometries and nonlinearities remains an important direction for future
research.

\end{document}